\theoremstyle{plain}
\newtheorem{theorem}{Theorem}[section]
\newtheorem{corollary}[theorem]{Corollary}
\newtheorem{lemma}[theorem]{Lemma}
\newtheorem{proposition}[theorem]{Proposition}
\theoremstyle{definition}
\newtheorem{definition}[theorem]{Definition}
\newtheorem{assumption}[theorem]{Assumption}
\theoremstyle{remark}
\newtheorem{remark}{Remark}
\newcommand{\N}{\mathbb{N}}
\newcommand{\R}{\mathbb{R}}
\newcommand{\Z}{\mathbb{Z}}
\newcommand\e{\varepsilon}
\newcommand\divv{\operatorname{div}}
\def\Xint#1{\mathchoice
   {\XXint\displaystyle\textstyle{#1}}%
   {\XXint\textstyle\scriptstyle{#1}}%
   {\XXint\scriptstyle\scriptscriptstyle{#1}}%
   {\XXint\scriptscriptstyle\scriptscriptstyle{#1}}%
   \!\int}
\def\XXint#1#2#3{{\setbox0=\hbox{$#1{#2#3}{\int}$}
     \vcenter{\hbox{$#2#3$}}\kern-.5\wd0}}
\def\fint{\Xint-}
\newcommand\dist{\operatorname{dist}}
\newcommand{\super}[1]{^{(#1)}}
\newcommand{\SO}[1]{\operatorname{SO}(#1)}
\newcommand\id{\operatorname{id}}
\newcommand\Id{\operatorname{Id}}
\newcommand{\bfa}{{\bf{a}}}
\newcommand{\dd}{\mathrm d}
\newcommand\ho{{\operatorname{hom}}}
\newcommand\per{{\operatorname{per}}}
\newcommand\loc{{\operatorname{loc}}}
\newcommand{\step}[1]{\medskip\noindent\textbf{Step #1. }}
\newcommand{\substep}[1]{\medskip\noindent\textit{Substep #1. }}
 \title{Lipschitz estimates and existence of correctors for nonlinearly elastic, periodic composites subject to small strains}
\date{\today}
 \author[1]{Stefan Neukamm\thanks{stefan.neukamm@tu-dresden.de}}
\author[1]{Mathias Sch\"affner\thanks{mathias.schaeffner@tu-dresden.de}}
\affil[1]{Faculty of Mathematics, Technische Universit\"at Dresden}
\begin{document}

\maketitle

\tableofcontents

\begin{abstract}
 We consider periodic homogenization of nonlinearly elastic composite materials. Under suitable assumptions on the stored energy function (frame indifference; minimality, non-degeneracy and smoothness at identity; $p\geq d$-growth from below), and on the microgeometry of the composite (covering the case of smooth, periodically distributed inclusions with touching boundaries), we prove that in an open neighbourhood of the set of rotations, the multi-cell homogenization formula of non-convex homogenization reduces to a single-cell formula that can be represented with help of a corrector. This generalizes a recent result of the authors by significantly relaxing the spatial regularity assumptions on the stored energy function. 
As an application, we consider the nonlinear elasticity problem for $\e$-periodic composites, and prove that minimizers (subject to small loading and well-prepared boundary data) satisfy a Lipschitz estimate that is uniform in $0<\e\ll1$. A key ingredient of our analysis is a new Lipschitz estimate (under a smallness condition) for monotone systems with spatially piecewise-constant coefficients. The estimate only depends on the geometry of the coefficient's discontinuity-interfaces, but not on the distance between these interfaces.

\smallskip

{\bf Keywords:} non-linear elasticity, quantitative homogenization, Lipschitz estimates.

\end{abstract}

\section{Introduction}

\subsection{Informal summary of the results}

A major difficulty in periodic homogenization of nonlinear elasticity is that long wavelength buckling may occur and thus the homogenized energy is in general not characterized by averaging over a single periodicity cell. This genuine nonlinear effect is not present in monotone or linear models for elasticity. Indeed, the classic result by Braides~\cite{Braides86} and M\"uller~\cite{Mueller87} on periodic homogenization of integral functionals shows that the functional
\begin{equation}\label{def:ene}
W^{1,p}(A,\R^d)\ni u\mapsto\int_A W(\tfrac{x}\e,\nabla u(x))\,dx,
\end{equation}
(under suitable growth conditions) $\Gamma$-converges as $\e\downarrow 0$ to a functional of the form $\int_A W_\ho (\nabla u)\,dx$, where $W_\ho$ denotes a homogenized energy density that is given by the \textit{multi-cell homogenization formula}
\begin{equation}\label{def:whom}
 W_\ho(F):=\inf_{k\in\N}W_\ho\super{k}(F),\qquad W_\ho\super{k}(F):=\inf_{\phi\in W_\per^{1,p}(Q_k,\R^d)}\fint_{Q_k}W(y,F+\nabla \phi)\,dy,
\end{equation}
where $Q_k:=(-\frac{k}{2},\frac{k}{2})^d$ and $Q_1$ is the periodicity cell. In the special case that $W=V$ is convex in the second variable, the multi-cell formula simplifies to a \textit{single-cell formula} and can be represented with help of a \textit{corrector}, i.e.,
\begin{equation*}
  V_\ho(F)=V_\ho\super1(F)=\min_{\phi\in W^{1,p}_\per(Q_1,\R^d)}\int_{Q_1}V(y,F+\nabla \phi(y))\,dy=\int_{Q_1}V(y,F+\nabla \phi(y,F))\,dy,
\end{equation*}
where the corrector $\phi(F)\in W^{1,p}_\per(Q_1,\R^d)$ is defined as the unique minimizer subject to $\int_{Q_1}\phi(F)=0$. The corrector is an important object in homogenization and yields an analytic relation between the material's heterogeneities and oscillations in the solution. The existence of a corrector can be the starting point of a more refined analysis, e.g.,\ to formulate a two-scale expansion and to establish estimates on the homogenization error. In the case of nonlinear elasticity, convexity of the integrand is ruled out by frame indifference and thus a reduction of $W_{\ho}$ to the single cell formula $W\super 1_\ho$ cannot be expected in general. In contrast, very recently in \cite{NS17} we showed that $W_\ho=W_\ho\super1$ is valid for small (but finite) strains. More precisely, our result in \cite{NS17} can be summarized as follows:
\begin{enumerate}[(i)]
 \item If $F\mapsto W(y,F)$ satisfies standard assumptions of hyperelasticity such as frame indifference; minimality, non-degeneracy and smoothness at identity; and $p\geq d$-growth from below, 
 \item and if the composite is sufficiently regular in the sense that $y\mapsto W(y,F)$ is smooth or of the form $W(y,F)=W(y_1,F)$ (for $F$ close to $\SO d$),
\end{enumerate}
then the following implication holds:
\begin{equation}\label{1cell:intro}
 \exists \rho>0:\quad\dist(F,\SO d)\leq \rho\quad\Rightarrow\quad W_\ho(F)=W_\ho\super{1}(F)=\int_{Q_1}W(y,F+\nabla\phi(y,F))\,dy,
\end{equation}
with a corrector $\phi(F)\in W^{1,p}_\per(Q_1,\R^d)$ that is characterized as the solution of a monotone system. This result extends earlier asymptotic results in \cite{MN11,GN11} that establish a reduction to a single cell-formula in the case of infinitesimally small strains. In view of a classic example by Stefan M\"uller (see \cite[Theorem 4.3]{Mueller87}) the above result is optimal in the sense that there exist $W$ satisfying (i) and (ii) and $F\in \R^{d\times d}$ such that $W_\ho(F)<W_\ho\super k(F)$ for all $k\in\N$.  
While assumption (i) is reasonable in view of applications to nonlinear elasticity, assumption (ii) is unsatisfactory, since it rules out to treat elastic composites beyond laminates. 
\smallskip

The first result of the present paper establishes \eqref{1cell:intro} for composites satisfying a significantly weaker assumption on the microgeometry of the composite than (ii). In particular, we allow for composites with closely spaced interfacial boundaries, see Theorem~\ref{T:1cell} below. As in our previous work \cite{NS17}, the proof of \eqref{1cell:intro} relies on two main ingredients---a careful convexification that yields a reduction to the convex case, and Lipschitz regularity for monotone systems. More precisely, in a convexification step (see Section~\ref{sec:convexreduction}) we construct a \textit{matching convex lower bound} $V$ for the stored energy function $W$ perturbed by a Null-Lagrangian. This construction, which has its origins in \cite{CDMK06,FT02}, yields a lower bound  for the perturbed, non-convex $W$ in \eqref{def:whom} by a strictly convex energy density $V$. Surprisingly, $V$ is equal to the perturbed $W$ in an open neighbourhood of $\SO d$. This allows us to express $W_{\ho}(F)$ in terms of $V_{\ho}(F)=\int_{Q_1}V(y,F+\nabla\phi(y,F))\,dy$, provided the Lipschitz-norm of the corrector $\phi(F)$ (associated with $V$) is sufficiently small. In \cite{NS17} the required Lipschitz estimate is obtained in the case when $y\mapsto W(y,F)$ is smooth by appealing to the implicit function theorem, and in the case of a layered material by an ODE argument, respectively. In the present contribution, we establish directly Lipschitz estimates for monotone elliptic systems with piecewise-constant coefficients, see Section~\ref{sec:introlip} for the precise assumptions and results. These estimates are of independent interest and their main novelty is that they depend on the geometry of the coefficient's discontinuity-interfaces, but not on the distance between those interfaces. Our estimates extend previous results valid for linear equations and systems \cite{LV00,LN03} and for nonlinear, scalar equations in \cite{BK16,BK17} to the case of monotone elliptic systems, see below Theorem~\ref{Th1} for a more detailed survey of the literature. Let us only mention that our Lipschitz estimates requires for $d\geq 3$ a smallness condition, see~\eqref{ass:smallnesTh1}. The smallness condition is necessary and natural, since for the monotone system under consideration, Lipschitz estimates for $d\geq 3$  are in general not available---even in the cases of homogeneous coefficients. 

\smallskip

The second result of the present paper is a \textit{uniform Lipschitz estimate} for minimizers of energies of the type \eqref{def:ene} subject to small body forces and well-prepared, periodic boundary conditions. More precisely, we study the variational problem
\begin{equation}\label{intro:ene:eps}
 \mbox{minimize}\quad \mathcal I_\e(u):=\int_{Q_1} W(\tfrac{x}\e,G+\nabla u)-f\cdot u\,dx\qquad\mbox{subject to}\quad u\in W_{\per}^{1,p}(Q_1).
\end{equation}
Assuming that $\dist (G,\SO d ) +\| f\|_{L^q}$, $q>d$ is sufficiently small, we prove existence and uniqueness (up to an additive constant) for solutions to \eqref{intro:ene:eps}, and provide Lipschitz estimates that are independent of $\e>0$, cf.~Theorem~\ref{T:Lipschitzeps}. 
The topic of obtaining \textit{uniform regularity estimates} for systems with coefficients oscillating on a small scale goes back to the seminal work of Avellaneda and Lin \cite{AL87}. They considered linear elliptic systems with periodic coefficients and observed that the good regularity theory of the homogenized (constant-coefficient) system can be lifted to the original system with oscillating coefficients. Based on this observation, they obtained a strong improvement of the regularity at scales that are large compared to the period of the coefficients. Combined with a small-scale regularity (that requires some regularity of the coefficients on small scales), they obtained uniform regularity estimates on the $L^p$-, H\"older, and Lipschitz scale, see~\cite{AL87,AL91}. The result of Avellaneda and Lin has been extended in various directions, e.g.,\ in \cite{LN03} for piecewise regular coefficients or in \cite{KLS13} for global estimates for the Neumann problem; see also the recent lecture notes by Shen~\cite{ShenNotes} for an up-to-date and in depth overview for uniform estimates for linear elliptic systems with oscillating (periodic) coefficients. Very recently the philosophy of Avellaneda and Lin has been extended to non-periodic situations, in particular in the case of stochastic homogenization large-scale regularity results have been established in \cite{AS16,AM16,GNO14} (for linear systems and nonlinear equations), see also \cite{ASh16} for uniform Lipschitz estimates for linear systems with almost periodic coefficients. Finally, we also mention \cite{KM08}, where uniform regularity estimates are obtained for highly oscillatory (periodic) nonlinear elliptic systems, see Remark~\ref{rem:kristensen} for a comparison with the results obtained in the present paper.

\smallskip

Our Lipschitz estimate  (see Theorem~\ref{T:Lipschitzeps}) seems to be the first result that applies to nonlinear elasticity albeit for small (but finite) data. It is noteworthy that due to the non-convexity of $W$ (we do not assume quasi-convexity of $W$) existence and in particular uniqueness of a solution of \eqref{intro:ene:eps} might fail in the absence of a smallness assumption of the data. Similar to the validity of the single-cell formula, the strategy to prove those estimates relies on the reduction argument to a monotone system with help of the \textit{matching convex lower bound}. We combine it with a large-scale regularity theory for monotone elliptic systems with periodic coefficients that comes in form of an \textit{intrinsic excess decay estimate}, see Proposition~\ref{P:largescale:holder}. This estimate is of independent interest and  extends a classical regularity statement for (constant-coefficient) monotone systems to the periodic case. To put the result into perspective, recall the classical $\e$-regularity statement for monotone systems (see e.g.,~\cite{Giaquinta83,Giu03,Ming07}): Assume $\bfa:\R^{d\times d}\to\R^{d\times d}$ is a smooth, strongly monotone coefficient  with $\bfa (0)=0$ and $|\bfa(F)|\leq c|F|$. Then for any H\"older-exponent $\alpha\in(0,1)$, there exists  $\kappa>0$ such that any $\bfa$-harmonic field $u\in H^1_{\loc}(\R^d)$ (i.e., $\divv \bfa (\nabla u)=0$) with \textit{small excess} in the sense of $E(\nabla u,B_R):=\inf_{F\in\R^{d\times d}}\left(\fint_{B_R}|\nabla u - F|^2\right)^\frac12\leq \kappa$ belongs to the class $\nabla u\in C^{0,\alpha}(B_\frac{R}{2})$, and satisfies the decay estimate
\begin{equation*}
 \forall r\in(0,R]:\qquad E(\nabla u,B_r)\leq c \left(\frac{r}{R}\right)^{\alpha} E(\nabla u,B_R),
\end{equation*}
where $c<\infty$ depends only on the smoothness and monotonicity of $\bfa$, and the dimension $d$. In other words, if the excess $E$ is small on one scale, say $R>0$, then we have an excess decay. In Proposition~\ref{P:largescale:holder}, we provide an analogous result for systems with periodic coefficients on large scales (i.e., for $R\geq r\geq 1$, where $1$ stands for the periodicity of the coefficient) and we consider an \textit{intrinsic} version of the excess $E$, see \eqref{def:excess}. 

\subsection{Results for nonlinear elasticity}\label{S:1.2}
In this section we state the assumption on $W$ and present the regularity results for nonlinear elasticity; the regularity results for monotone systems are presented in Sections~\ref{S:1.2}, \ref{S:3}, and \ref{sec:lip:hom}. For notation, we refer to Section~\ref{sec:notation}. We first introduce a class of frame-indifferent stored energy functions that are minimized, non-degenerate and smooth at identity, and satisfy a growth condition from below.
\begin{definition}[Stored energy density]\label{def:walphap}
For $\alpha>0$ and $p>1$, we denote by $\mathcal W_{\alpha}^p$ the class of Borel functions $W:\R^{d\times d}\to[0,+\infty]$ which satisfy the following  properties:
\begin{itemize}
\item[(W1)] $W$ satisfies $p$-growth from below, i.e.,
\begin{equation*}
 \alpha|F|^p-\frac{1}{\alpha}\leq W(F)\quad\mbox{for all $F\in\R^{d\times d}$};
\end{equation*}
\item[(W2)] $W$ is frame indifferent, i.e.,
\begin{equation*}
 W(RF)=W(F)\quad\mbox{for all $R\in\SO d$, $F\in\R^{d\times d}$};
\end{equation*}
\item[(W3)] $F=\Id$ is a natural state and $W$ is non-degenerate, i.e.,\ $W(\Id)=0$ and 
\begin{align*}
 W(F)&\geq \alpha\dist^2(F,\SO d)\quad\mbox{for all $F\in\R^{d\times d}$;}
\end{align*}
\item[(W4)] $W$ is $C^3$ in the neighborhood $U_\alpha:=\{F\in \R^{d\times d}\,:\,\dist(F,\SO d)<\alpha\}$ of $\SO d$ and
\begin{equation*}
 \|W\|_{C^3(\overline {U_{\alpha}})}<\frac{1}{\alpha}.
\end{equation*}
\end{itemize}
\end{definition}
We consider periodic, piecewise-constant composite materials with ``regular'' phases, i.e.,~heterogeneities  that locally can be approximated by laminates. The precise assumption is as follows:
\begin{definition}[layered and regular tessellation]\label{def:regD}
  \begin{itemize}
  \item A \textit{tessellation} (of $\R^d$) is a sequence $\mathcal D:=\{D_\ell\}_{\ell\in\Z}$ of mutually disjoint, open subsets of $\R^d$ such that $\R^d=\bigcup_{\ell\in\Z} \overline D_\ell$. 
  \item For $0<s\leq 1$,  $x\in \R^d$ and tessellations $\mathcal D:=\{D_\ell\}_{\ell\in\Z}$ and $\mathcal D':=\{D_\ell'\}_{\ell\in\Z}$ we set
    \begin{equation*}
      E_{x,s}(\mathcal D,\mathcal D'):=\sup_{0<r\leq 1}r^{-s}\Big(|B_r|^{-1}\sum_{\ell\in\Z}\big|(D_\ell \triangle D_{\ell}')\cap B_r(x)\big|\Big)^\frac12,
    \end{equation*}
    where $\triangle$ and $|\cdot|$ denote the symmetric difference and $d$-dimensional Lebesgue measure of sets in $\R^d$, respectively.
  \item A tessellation $\{D_\ell\}_{\ell\in\Z}$ is called $e$-\textit{layered} if for some $e\in\R^d$ and a strictly monotone sequence $\{h_\ell\}_{\ell\in\Z}$ we have $D_\ell=\{x\in\R^d\,:\,h_\ell<x\cdot e<h_{\ell+1}\}$ for all $\ell\in\Z$. We call a tessellation $\{D_\ell\}_{\ell\in\Z}$ \textit{layered} if it is $e$-layered for some $e\in\R^d$.
  \item A tessellation $\mathcal D$ is called \textit{$(E,s)$-regular} (where $0<s\leq 1$ and $E<\infty$), if for all $x \in\R^d$ there exists a layered tessellation $\mathcal D_x$ such that $E_{s,x}(\mathcal D,\mathcal D_x)\leq E$.
\end{itemize}
\end{definition}
The above definition of a $(E,s)$-regular tessellation is rather implicit. An instructive example is as follows: Suppose that $\mathcal D$ is of the form $\mathcal D:=\{D_0,D_m+z\,\,|\, m=1,\dots,L,\,z\in\Z^d\}$, where $D_1,\dots,D_L$ are sufficiently smooth, say $C^{1,\alpha}$, inclusions that are open, disjoint and compactly contained in $Q_1$, and $D_0=\R^d\setminus \cup_{z\in\Z^d}\cup_{m=1}^L (D_m + z)$.  Then, $\mathcal D$ defines a $(E,s)$-regular partition, where $E$ and $s$ depend on $\alpha$, $L$ and the $C^{1,\alpha}$ norms of the boundary of $D_m$, $m=1,\dots,L$, but not on the distance between two inclusions. In particular, the inclusions might touch (see~\cite[Lemma~5.2]{LV00}).
\begin{definition}[heterogeneous coefficient fields and energy densities]
  Let $\mathcal F$ denote a class of coefficients (resp.~energy densities) on $\R^{d\times d}$ (e.g., $\mathcal F=\mathcal W^p_\alpha$). A \textit{coefficient field (resp.~energy density) of class $\mathcal F$} is a function $f$ defined on $\R^d\times\R^{d\times d}$ such that $f(\cdot,F)$ is Borel-measurable for all $F\in\R^{d\times d}$ and $f(x,\cdot)\in\mathcal F$ for a.e.~$x\in\R^d$. Additionally, $f$ is called
  \begin{itemize}
  \item \textit{spatially homogeneous}, if $f(\cdot,F)$ is constant for all $F\in\R^{d\times d}$.
  \item \textit{layered} (resp.~\textit{$e$-layered} or \textit{$(E,s)$-regular}), if there exists a layered (resp.~$e$-layered or $(E,s)$-regular) tessellation $\{D_\ell\}_{\ell\in\Z}$ such that
    $f(\cdot,F)$ is constant on each $D_\ell$ for all $F\in\R^{d\times d}$ and $\ell\in\Z$.
  \item \textit{periodic}, if $f(x+z,F)=f(x,F)$ for all $F\in\R^{d\times d}$, all $z\in\Z^d$, and a.e.~$x\in\R^d$.
  \end{itemize}
\end{definition}

\begin{assumption}\label{ass:W:1}
  Fix $0<\alpha,s\leq 1$, $E<\infty$ and $p\geq d$. We suppose
  that $W$ is a $(E,s)$-regular \& periodic energy density of class $\mathcal
  W^p_\alpha$.
\end{assumption}
Our first main result proves the validity of the single-cell formula for small, but finite strains:
\begin{theorem}\label{T:1cell}
Suppose Assumption~\ref{ass:W:1} is satisfied. Then there exists $\bar{\bar\rho}>0$  such that for all 
\begin{equation*}
  F\in U_{\bar{\bar\rho}}:=\big\{\,F\in \R^{d\times d}\,:\,\dist(F,\SO d)<\bar{\bar\rho}\,\big\}.
\end{equation*}
the following properties are satisfies:
\begin{enumerate}[(a)]
\item (Single-cell formula). 
  \begin{equation*}
    W_\ho(F)=W_\ho\super{1}(F).
  \end{equation*}
\item (Corrector). There exists a unique corrector $\phi(F)\in W^{1,p}_{\per,0}({Q_1},\R^d)$ such that 
  \begin{equation*}
      W_\ho(F)=\inf_{\varphi\in W_\per^{1,p}(Q_1)}\int_{Q_1}W(y,F+\nabla \varphi(y))\,dy=\int_{Q_1}W(y,F+\nabla \phi(y,F))\,dy.
  \end{equation*}
  The corrector satisfies $\phi(F)\in W^{1,\infty}({Q_1})$ and it is the unique weak solution of
  \begin{equation}\label{eq:correctorpde}
   -\divv DW(y,F+\nabla \phi)=0\quad\mbox{satisfying}\quad \|\dist(F+\nabla \phi)\|_{L^\infty(Q_1)}< \delta,
  \end{equation}
  and $\int_{Q_1}\phi=0$,  where $\delta=\delta(\alpha,d,p)>0$ is given in Lemma~\ref{C:wv} below.
\item (Regularity and quadratic expansion). $W_\ho\in C^2(U_{\bar{\bar\rho}})$ and for all $G\in\R^{d\times d}$ we have 
\begin{align*}
 DW_\ho(F)[G]=&\int_{Q_1}DW(y,F+\nabla \phi(y,F))[G]\,dy\notag,\\
 D^2W_\ho(F)[G,G]=&\inf_{\psi\in H_\per^1({Q_1})}\int_{Q_1}D^2W(y,F+\nabla \phi(y,F))[G+\nabla \psi(y),G+\nabla \psi(y)]\,dy,
\end{align*}
where $\phi(F)$ denotes the corrector defined in (b).
\item (Strong rank-one convexity). There exists $c>0$ such that 
\begin{equation*}
  D^2W_\ho(F)[a\otimes b,a\otimes b]\geq  c|a\otimes b|^2\quad\mbox{for all $a,b\in\R^d$}.
\end{equation*}
\end{enumerate}
\end{theorem}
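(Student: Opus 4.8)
The plan is to run the two-ingredient strategy sketched in the introduction; the only genuinely new input compared with \cite{NS17} is the Lipschitz estimate Theorem~\ref{Th1} for monotone systems with $(E,s)$-regular piecewise-constant coefficients. Throughout, fix the null Lagrangian $L$ and the strictly convex lower bound $V$ produced by the matching-convex-lower-bound construction of Section~\ref{sec:convexreduction} / Lemma~\ref{C:wv}: $V$ is again a periodic, $(E,s)$-regular energy density with $p$-growth from below, $W(y,\cdot)-L\ge V(y,\cdot)$ on all of $\R^{d\times d}$, and equality holds on $U_\delta:=\{F:\dist(F,\SO d)<\delta\}$ with $\delta=\delta(\alpha,d,p)$. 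Since $L$ is a null Lagrangian and $|Q_1|=1$, one has $\int_{Q_1}L(F+\nabla\varphi)\,dy=L(F)$ for every periodic $\varphi$, and likewise $\fint_{Q_k}L(F+\nabla\varphi)=L(F)$; this lets us pass between the $W$- and $V$-functionals on the region $\{\dist<\delta\}$.

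First I would analyse the convex corrector for $V$. Since $V(y,\cdot)$ is strictly convex with $p$-growth (and, after a harmless modification away from $\SO d$ that leaves $V|_{U_\delta}$ untouched, we may further assume $V(y,\cdot)\in C^2$ with bounded, uniformly positive Hessian, so that $\bfa:=DV(y,\cdot)$ meets the structural and regularity hypotheses of Theorem~\ref{Th1}), for each $F$ the cell problem $\inf_{\varphi\in W^{1,p}_{\per,0}(Q_1,\R^d)}\int_{Q_1}V(y,F+\nabla\varphi)$ has a unique minimizer $\psi(F)$, characterized weakly by $-\divv DV(y,F+\nabla\psi(F))=0$; at $F=R\in\SO d$ one checks $\psi(R)=0$ (the constant field $R$ solves the Euler--Lagrange equation, as $DV(y,R)$ is spatially constant). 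An energy estimate comparing $\psi(F)$ with the constant field $R_F\in\SO d$ nearest to $F$, using uniform monotonicity of $DV$ and its (uniform-in-$y$) Lipschitz continuity near $\SO d$, gives $\|\nabla\psi(F)\|_{L^2(Q_1)}\le C\dist(F,\SO d)$; feeding the $\bfa$-harmonic field $Fx+\psi(F)$ into Theorem~\ref{Th1} — whose smallness hypothesis \eqref{ass:smallnesTh1} holds once $\dist(F,\SO d)$ is small — upgrades this to $\|\nabla\psi(F)\|_{L^\infty(Q_1)}\le C\dist(F,\SO d)$. Hence there is $\bar{\bar\rho}>0$ with $\|\dist(F+\nabla\psi(F),\SO d)\|_{L^\infty(Q_1)}<\delta$ for all $F\in U_{\bar{\bar\rho}}$, and in particular $\psi(F)\in W^{1,\infty}(Q_1)$. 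I expect this step — verifying the hypotheses of Theorem~\ref{Th1} for $\bfa=DV(y,\cdot)$, in particular that the $(E,s)$-regularity of $W$ carries over to $V$, and closing the bound so that the corrector stays inside $U_\delta$ — to be the main obstacle; everything downstream is soft.

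Given this, (a) and (b) follow by comparison. On $U_{\bar{\bar\rho}}$ we have $F+\nabla\psi(F)\in U_\delta$ a.e., so $V(y,\cdot)=W(y,\cdot)-L$ there; combined with the null-Lagrangian identity this gives $V_\ho(F)+L(F)=\int_{Q_1}W(y,F+\nabla\psi(F))\,dy$. Using $W(y,\cdot)-L\ge V(y,\cdot)$ and the null-Lagrangian identity on $Q_k$ together with $V_\ho\super{k}=V_\ho\super{1}=V_\ho$ (single-cell formula for convex integrands), we get, for every $k$, $W_\ho\super{k}(F)\ge V_\ho(F)+L(F)=\int_{Q_1}W(y,F+\nabla\psi(F))\,dy\ge W_\ho\super{1}(F)$, while $W_\ho(F)\le W_\ho\super{1}(F)$ trivially; hence $W_\ho(F)=W_\ho\super{1}(F)=\int_{Q_1}W(y,F+\nabla\psi(F))\,dy$, proving (a) and showing $\phi(F):=\psi(F)$ realizes the infimum in (b). For uniqueness of the constrained solution of \eqref{eq:correctorpde}: if $\phi$ solves $-\divv DW(y,F+\nabla\phi)=0$ weakly with $\|\dist(F+\nabla\phi,\SO d)\|_{L^\infty}<\delta$ and $\int_{Q_1}\phi=0$, then since $DW=DV+DL$ on $U_\delta$ and $\int_{Q_1}DL(F+\nabla\phi)\cdot\nabla\varphi=0$ (null Lagrangian), $\phi$ is a critical point of the strictly convex $V$-functional, hence $\phi=\psi(F)$; its Lipschitz bound was obtained above.

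Finally, for (c) note that $W_\ho=V_\ho+L$ on $U_{\bar{\bar\rho}}$, with $L$ a polynomial; the implicit function theorem applied to the map $(F,\psi)\mapsto\big(\varphi\mapsto\int_{Q_1}DV(y,F+\nabla\psi)\cdot\nabla\varphi\big)$ — whose $\psi$-derivative at $\psi(F)$ is invertible by Lax--Milgram (uniform convexity of $V$), and which is $C^1$ by the $C^2$-regularity of $V$ near $\SO d$ — yields $F\mapsto\psi(F)\in C^1$, so $V_\ho\in C^2$ and $W_\ho\in C^2(U_{\bar{\bar\rho}})$. Differentiating $V_\ho(F)=\int_{Q_1}V(y,F+\nabla\psi(F))$, the corrector-variation term drops by the Euler--Lagrange equation, giving $DV_\ho(F)[G]=\int_{Q_1}DV(y,F+\nabla\psi(F))[G]$ and, by the standard computation, $D^2V_\ho(F)[G,G]=\inf_{\chi\in H^1_\per(Q_1)}\int_{Q_1}D^2V(y,F+\nabla\psi(F))[G+\nabla\chi,G+\nabla\chi]$; rewriting with $DV+DL=DW$ and $D^2V+D^2L=D^2W$ on $U_\delta$, and the null-Lagrangian identities $\int_{Q_1}DL(F+\nabla\psi(F))[G]=DL(F)[G]$ and $\int_{Q_1}D^2L(F+\nabla\psi(F))[G+\nabla\chi,G+\nabla\chi]=D^2L(F)[G,G]$, gives the formulas of (c). For (d), by frame indifference of $W_\ho$ it suffices to bound $D^2W_\ho(U)[a\otimes b,a\otimes b]$ from below at symmetric $U$ close to $\Id$; then $A(y):=U+\nabla\phi(y,U)$ lies within $C\bar{\bar\rho}$ of $\Id$, so by (W3)--(W4) (Taylor expansion at $\Id$, where $DW(y,\Id)=0$ and $\dist^2(\Id+H,\SO d)=|\sym H|^2+O(|H|^3)$) one gets $D^2W(y,A(y))[M,M]\ge 2\alpha|\sym M|^2-C\bar{\bar\rho}|M|^2$ for all $M$. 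Inserting $M=a\otimes b+\nabla\chi$ into the Hessian formula and using $\int_{Q_1}|\sym(a\otimes b+\nabla\chi)|^2=|\sym(a\otimes b)|^2+\int_{Q_1}|\sym\nabla\chi|^2$ ($\sym(a\otimes b)$ constant, $\int\nabla\chi=0$), the pointwise bound $|\sym(a\otimes b)|^2\ge\tfrac12|a\otimes b|^2$, and the periodic Korn inequality $\int_{Q_1}|\nabla\chi|^2\le 2\int_{Q_1}|\sym\nabla\chi|^2$ to absorb the error, one obtains $D^2W_\ho(U)[a\otimes b,a\otimes b]\ge\tfrac\alpha2|a\otimes b|^2$ for $\bar{\bar\rho}$ small, which is (d).
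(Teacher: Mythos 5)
Your proposal follows the same strategy as the paper's proof: use the matching convex lower bound $V$ from Lemma~\ref{C:wv}, derive an $L^2$ bound on the convex corrector from monotonicity, upgrade it to a Lipschitz bound via Theorem~\ref{Th1}, and then use the equality $W+\mu\det=V$ on $U_\delta$ together with the convex single-cell formula to obtain (a) and (b). Your fleshed-out arguments for (c) (implicit function theorem) and (d) (Taylor expansion at $\Id$ plus the periodic Korn inequality) also match what the paper delegates to \cite{NS17}.

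However, the central step would fail as literally written. You apply Theorem~\ref{Th1} to $\bfa:=DV(y,\cdot)$ and the field $v:=Fx+\psi(F)$, asserting that the smallness hypothesis \eqref{ass:smallnesTh1} holds once $\dist(F,\SO d)$ is small. Two things go wrong: first, $\bfa(0)=DV(y,0)$ need not vanish, so $\bfa\notin\mathcal A_\beta^0\supset\mathcal A_\beta$ and Theorem~\ref{Th1} is not applicable; second, $\nabla v=F+\nabla\psi(F)$ has $\|\nabla v\|_{L^2}\approx|F|\approx\sqrt d$, which is of order one for $F$ near $\SO d$, so \eqref{ass:smallnesTh1} is violated for $d\ge3$ no matter how close $F$ is to $\SO d$ — and even if one could apply the estimate \eqref{est:lipschitzTh1}, it would only yield $\|F+\nabla\psi(F)\|_{L^\infty}\lesssim|F|$, not the needed $\|\nabla\psi(F)\|_{L^\infty}\lesssim\dist(F,\SO d)$. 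The fix is the normalization carried out in Lemma~\ref{L:DVabeta} and Substep~1.2 of the paper's proof: take $R\in\SO d$ nearest to $F$, set $\bfa_R(x,G):=DV(x,R+G)-DV(x,R)$ (which is a $(E,s)$-regular coefficient field of class $\mathcal A_\beta$ since $DV(x,R)=\mu D\det(R)$ is spatially constant), and apply Theorem~\ref{Th1} to the $\bfa_R$-harmonic field $v_R:=(F-R)x+\psi(F)$, whose gradient has $L^2$-norm $\lesssim\dist(F,\SO d)$. This yields $\|F-R+\nabla\psi(F)\|_{L^\infty}\lesssim\dist(F,\SO d)$, which is the Lipschitz bound you need to close Step~1.
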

The proof of Theorem~\ref{T:1cell} is given in Section~\ref{sec:1cell}. 

\begin{remark}\label{rem:below1cell}
 In \cite[Theorem~1]{NS17}, the conclusion of Theorem~\ref{T:1cell} is proven under the assumption that the energy densities are smooth in the space variable $x$ or are $e_d$-layered. Here, we extend this result to $(E,s)$-regular energy densities and thus include more realistic models for composite materials. 
\end{remark}
 
\begin{remark}[Quantitative two-scale expansion]\label{rem:2scale}
 In \cite{NS17}, the existence of a unique corrector is used to establish a quantitative two-scale expansion for small, but finite, loads. In particular, we studied the variational problem 
 \begin{equation*}
  \mbox{minimize}\quad \mathcal I_\e(u):=\int_A W(\tfrac{x}\e,\nabla u)-f\cdot u\,dx\qquad\mbox{subject to}\quad u-g\in W_0^{1,p}(A).
 \end{equation*}
 In \cite[Theorem~3]{NS17}, we proved that if $\|f\|_{L^q(A)}+\|g-\id\|_{W^{2,q}(A)}$ for some $q>d$ is sufficiently small, then for every $u\in g+W_0^{1,p}(A)$, it holds
 \begin{equation*}
 \|u-u_0\|_{L^2(A)}+\|u-(u_0+\e\phi(\tfrac{\cdot}\e,\nabla u))\|_{H^1(A)}\lesssim \sqrt{\e}+\left(\mathcal I_\e(u)-\inf_{g+W_0^{1,p}(A)}\mathcal I_\e\right)^\frac12, 
\end{equation*}
where $u_0$ denotes the unique minimizer of
\begin{align*}
 \mathcal I_\ho(u):=\int_A W_{\hom}(\nabla u)-f\cdot u\,dx\qquad\mbox{subject to}\quad u-g\in W_0^{1,p}(A).
\end{align*}
 For this, the smoothness (or laminate structure) of the energy density $W$ is used to show that $x\mapsto \phi(x,\nabla u_0(x))$ is in $H^1(A)$. In the case of $(E,s)$-regular coefficients this is not clear. In future work we address the issue of convergence rates also in the case of $(E,s)$-regular coefficients. Let us mention that in Step~2 of the proof of Proposition~\ref{P:largeholer1} below, we provide an estimate on the homogenization error for rather irregular boundary conditions with a modified two-scale expansion. In the situation of smooth boundary data as considered in \cite[Theorem~3]{NS17} this approach yields an error estimate with $\sqrt{\e}$ replaced by $\e^\frac14$ and $\nabla (\phi(\tfrac{x}\e,\nabla u_0))$ replaced by  $\nabla \phi(\tfrac{\cdot}\e,F)$ where $F$ is a suitable piecewise constant approximation of $\nabla u$.   
\end{remark}

The second main result of the present contribution is the following Lipschitz-regularity statement for minimizers of integral functionals of type \eqref{def:ene} with small data and periodic boundary conditions:

\begin{theorem}[Uniform Lipschitz estimate I]\label{T:Lipschitzeps}
Suppose Assumption~\ref{ass:W:1} is satisfied. Suppose that $\e^{-1}\in\N$ and fix $q>d$. There exist $\bar \rho=\bar\rho(\alpha,d,E,q,s)>0$ and $c=c(\alpha,d,E,q,s)<\infty$  such that the following statement holds:

\smallskip

For given $f\in L^q(Q_1)$, with $\int_{Q_1}f=0$, and $F\in\R^{d\times d}$, consider the minimization problem 
\begin{equation}\label{minprob:ueps}
 \inf\left\{ \int_{Q_1}W(\tfrac{x}\e,F+\nabla u(x))-f(x)\cdot u(x)\,dx\, \big|\, u\in W_{\per,0}^{1,p}(Q_1)\right\}.
\end{equation}
Suppose $f$ and $F$ satisfy the smallness condition
\begin{equation}\label{def:Lambda:intro}
 \Lambda(f,F):=\|f\|_{L^q(Q_1)}+\dist(F,\SO d)<\bar\rho.
\end{equation}
Then:
\begin{itemize}
 \item There exists a unique minimizer $u$ of \eqref{minprob:ueps}.
 \item The minimizer $u$ of \eqref{minprob:ueps} satisfies $u\in W^{1,\infty}(Q_1)$ and the estimate 
\begin{align*}
 \|\dist(F+\nabla u,\SO d)\|_{L^\infty(Q_1)}\leq c\Lambda(f,F).
\end{align*}  
 Moreover, $u$ is the unique weak solution in $W_{\per,0}^{1,p}(Q_1)$ of
 \begin{equation}\label{eq:eulerlagrange}
  -\divv DW(\tfrac{x}\e,F+\nabla u)=f\quad\mbox{satisfying}\quad \|\dist(F+\nabla u)\|_{L^\infty(Q_1)}< \delta,
  \end{equation}
 where $\delta=\delta(\alpha,d,p)>0$ is given in Lemma~\ref{C:wv} below.

\end{itemize}
\end{theorem}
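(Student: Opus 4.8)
The plan is to reduce the non-convex minimization problem~\eqref{minprob:ueps} to a strictly convex one via the matching convex lower bound $V$ from the convexification step (Section~\ref{sec:convexreduction}, cf.\ Lemma~\ref{C:wv}), and then to apply the uniform Lipschitz estimate for monotone systems with $(E,s)$-regular periodic coefficients developed in Sections~\ref{S:3}--\ref{sec:lip:hom}. First I would record the structural properties of $V=V(y,\cdot)$: it is a $C^2$, strictly convex, strongly monotone energy density, $(E,s)$-regular and periodic in $y$ (inheriting these from $W$), it satisfies $p$-growth and a quadratic lower bound $D^2V\geq c\,\Id$ uniformly, and crucially it agrees with the perturbed $W$ (i.e.\ $W$ plus a Null-Lagrangian) on the neighbourhood $\{F:\dist(F,\SO d)<\delta\}$. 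Because the Null-Lagrangian term does not affect the Euler--Lagrange equation under periodic boundary conditions, the convexified functional $\widetilde{\mathcal I}_\e(u):=\int_{Q_1}V(\tfrac{x}\e,F+\nabla u)-f\cdot u\,dx$ is a lower bound for $\mathcal I_\e$ with equality whenever $\|\dist(F+\nabla u,\SO d)\|_{L^\infty}<\delta$.

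Next, for the convexified problem: since $V(y,\cdot)$ is strictly convex with $p$-growth, $\widetilde{\mathcal I}_\e$ is strictly convex and coercive on $W^{1,p}_{\per,0}(Q_1)$, so it has a unique minimizer $\bar u_\e$, characterized as the unique weak solution of $-\divv DV(\tfrac{x}\e,F+\nabla \bar u_\e)=f$ in $W^{1,p}_{\per,0}(Q_1)$. The heart of the argument is then the uniform Lipschitz estimate: I would invoke the large-scale regularity theory for monotone periodic systems (the intrinsic excess decay estimate, Proposition~\ref{P:largescale:holder}) together with the small-scale Lipschitz estimate for monotone systems with $(E,s)$-regular piecewise-constant coefficients (Theorem~\ref{Th1}), rescaled to the $\e$-periodic coefficient $y\mapsto DV(\tfrac{y}\e,\cdot)$. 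Since $\e^{-1}\in\N$, the $\e$-scaled coefficient is $\e\Z^d$-periodic, and the combination of large-scale (for $r\geq\e$) and small-scale (for $r<\e$) estimates yields a Lipschitz bound on $F+\nabla\bar u_\e$ that is independent of $\e$, of the form $\|\dist(F+\nabla\bar u_\e,\SO d)\|_{L^\infty(Q_1)}\leq c(\dist(F,\SO d)+\|f\|_{L^q(Q_1)})=c\Lambda(f,F)$; here one uses that $F$ close to $\SO d$ plus $f$ small forces the intrinsic excess to be small on unit scales, so the $\e$-regularity hypotheses are met, and $q>d$ gives the Morrey-type control needed to absorb $f$.

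Finally I would close the loop between the convex and non-convex problems. Choosing $\bar\rho$ small enough that $c\bar\rho<\delta$, the smallness condition $\Lambda(f,F)<\bar\rho$ forces $\|\dist(F+\nabla\bar u_\e,\SO d)\|_{L^\infty(Q_1)}<\delta$, hence on the range of $F+\nabla\bar u_\e$ the densities $V$ and (perturbed) $W$ coincide; since the Null-Lagrangian is constant on $W^{1,p}_{\per,0}(Q_1)$, $\bar u_\e$ is also a minimizer of the original $\mathcal I_\e$. For uniqueness of the original minimizer: any minimizer $u$ of $\mathcal I_\e$ satisfies $\mathcal I_\e(u)\le\mathcal I_\e(\bar u_\e)=\widetilde{\mathcal I}_\e(\bar u_\e)\le\widetilde{\mathcal I}_\e(u)\le\mathcal I_\e(u)$, so all inequalities are equalities; in particular $\widetilde{\mathcal I}_\e(u)=\widetilde{\mathcal I}_\e(\bar u_\e)$ and strict convexity of $\widetilde{\mathcal I}_\e$ gives $u=\bar u_\e$ (mod constants, fixed by $\int_{Q_1}u=0$). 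The Euler--Lagrange statement~\eqref{eq:eulerlagrange} then follows because, on $\{\dist<\delta\}$, $DW=DV$, so $\bar u_\e$ solves $-\divv DW(\tfrac{x}\e,F+\nabla u)=f$; and this solution is unique among weak solutions with $\|\dist(F+\nabla u,\SO d)\|_{L^\infty}<\delta$ since any such solution also solves the $V$-system, which has a unique solution by strict monotonicity.

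The main obstacle is the $\e$-uniform Lipschitz estimate for the monotone system with $\e$-periodic $(E,s)$-regular coefficients — specifically, the clean patching of the large-scale excess-decay (Proposition~\ref{P:largescale:holder}) at scales $r\geq\e$ with the interface-geometry-dependent small-scale estimate (Theorem~\ref{Th1}) at scales $r<\e$, and verifying that the smallness hypotheses propagate through the rescaling so that the constants depend only on $(\alpha,d,E,q,s)$ and not on $\e$; the reduction steps via the matching convex lower bound are, by contrast, essentially bookkeeping given Lemma~\ref{C:wv}.
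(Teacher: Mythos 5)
Your proposal is correct and follows essentially the same route as the paper: reduce to the strictly convex functional built from the matching convex lower bound $V$ (Lemma~\ref{C:wv}), obtain the unique minimizer of the convex problem, apply the uniform Lipschitz estimate for the associated monotone system (the paper cites Theorem~\ref{T:3}, which is exactly your patching of Proposition~\ref{P:largescale:holder} at scales $\geq\e$ with Theorem~\ref{Th1} at scales $<\e$), and then use the matching property \eqref{W=V} together with the Null-Lagrangian identity to transfer minimality, uniqueness, and the Euler--Lagrange characterization back to the original non-convex problem.
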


\begin{remark}
 Notice that we do not assume that $W$ is quasi-convex and thus existence of a minimizer for \eqref{minprob:ueps} does not follow from the direct method of the calculus of variations. Moreover, since we do not assume any growth condition for $W$ from above, it is in general not clear that a minimizer for \eqref{minprob:ueps} satisfies the Euler-Lagrange equation of the form \eqref{eq:eulerlagrange}. 

\smallskip

\begin{remark}
  In this work, we do not address boundary regularity and thus we are
  forced to consider well-prepared periodic boundary conditions in
  Theorem~\ref{T:Lipschitzeps}. The proof of
  Theorem~\ref{T:Lipschitzeps} relies on a convexification step, see
  Section~\ref{sec:convexreduction}, and uniform Lipschitz estimates
  for the corresponding monotone system, see~Theorem~\ref{T:3}
  below. The periodic boundary conditions in
  Theorem~\ref{T:Lipschitzeps} could be replaced by Dirichlet boundary
  conditions provided uniform global Lipschitz estimates for the
  corresponding monotone system are established. In the case of
  discontinuous coefficients, as considered in
  Theorem~\ref{T:Lipschitzeps}, this is not expected in general, even
  in the case of linear laminates (see
  e.g.,~\cite[Section~5.3]{AL87}). For sufficiently regular
  coefficients (or $(E,s)$-regular coefficients and additional
  assumptions on the relation between the geometry of the macroscopic
  domain and the microstructure), we expect that those estimates hold
  true but this is beyond the scope of the present paper.
\end{remark}

\end{remark}

\subsection{Reduction to monotone systems}\label{sec:convexreduction}

The proof of Theorem~\ref{T:1cell} follows the strategy of \cite{NS17}. The starting point is the observation that $W\in\mathcal W_\alpha^p$ implies the existence of a ``matching convex lower bound''. For the precise statement we introduce the following class of strongly convex functions:
\begin{definition}[Convex energy density]\label{def:vbeta}
  For $\beta>0$ we denote by $\mathcal V_{\beta}$ the set of functions $V\in C^2(\R^{d\times d})$ satisfying for all $F,G\in\R^{d\times d}$
   \begin{align*}
   &\beta |F|^2-\frac{1}{\beta}\leq V(F)\leq \frac{1}{\beta}(|F|^2+1),\\
      & |DV(F)[G]|\leq \frac1\beta(1+|F|)|G|,\\
   &\beta|G|^2\leq D^2V(F)[G,G]\leq \frac{1}{\beta}|G|^2. 
 \end{align*}
\end{definition}

The following lemma is proven in \cite{NS17} extending a construction that appeared earlier in the context of discrete energies in \cite{CDMK06,FT02}
\begin{lemma}[Matching convex lower bound (see \cite{NS17}, Corollary~2.3)]\label{C:wv}
Suppose that Assumption~\ref{ass:W:1} is satisfied. Then there exist $\delta,\mu,\beta>0$ (depending on $\alpha,d$ and $p$), and a $(E,s)$-layered \& periodic energy density $V$ of class $\mathcal V_{\beta}$ such that for almost every $x\in \R^d$ we have $V(x,\cdot)\in C^3(\R^{d\times d})$, and
\begin{align}
  &W(x,F)+\mu \det F\ \geq V (x,F)\qquad\mbox{for all $F\in \R^{d\times d}$,}\label{WgeqV}\\
  &W(x,F)+\mu \det F\ =V (x,F)\qquad \mbox{for all }F\in\R^{d\times d}\mbox{ with }\dist(F,\SO d)<\delta,\label{W=V}\\
  &V(x,RF)=V(x,F)\qquad\text{for all }F\in \R^{d\times d}\mbox{ and }R\in\SO d.\notag
\end{align}
\end{lemma}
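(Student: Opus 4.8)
The plan is to follow the matching-convex-lower-bound construction of \cite{CDMK06,FT02}, carried out pointwise in $x$ on each cell $D_\ell$ of the tessellation, and to check that the construction is uniform over the class $\mathcal W^p_\alpha$ so that the resulting $V$ inherits the $(E,s)$-layered \& periodic structure from $W$. Since the excerpt tells us this is precisely Corollary~2.3 of \cite{NS17}, the work is to recall the ideas behind that statement rather than to prove something new. First I would fix $x$ and work with a single function $W(x,\cdot)\in\mathcal W^p_\alpha$. The key algebraic fact is that frame indifference (W2) together with $W(\Id)=0$ and $C^3$-smoothness at identity (W3)--(W4) forces $DW(x,\Id)=0$ and $D^2W(x,\Id)[G,G]$ to depend only on the symmetric part $\sym G$; moreover (W3) gives $D^2W(x,\Id)[G,G]\geq 2\alpha|\sym G|^2$. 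The classical observation is that adding a suitable multiple $\mu\det F$ of the Null-Lagrangian $\det$ restores positive definiteness on the full tangent space: because $D^2(\det)(\Id)[G,G]=(\operatorname{tr}G)^2-\operatorname{tr}(G^2)$ controls the skew part, one can choose $\mu=\mu(\alpha,d)>0$ so that $F\mapsto W(x,F)+\mu\det F$ is uniformly strongly convex in a fixed neighbourhood $\{\dist(F,\SO d)<\delta\}$ of $\SO d$, with $\delta=\delta(\alpha,d)>0$ and convexity constant $\beta=\beta(\alpha,d,p)>0$, uniformly over the class by (W4).

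Next I would perform the global convexification. On the ball $B_\delta(\Id)$ (or rather a slightly smaller frame-indifferent neighbourhood) set $\widetilde W(x,F):=W(x,F)+\mu\det F$; this is strongly convex there and satisfies the natural-state conditions. Outside, one replaces $\widetilde W$ by an explicit quadratic-type extension: e.g.\ take the supremum of all affine functions tangent to $\widetilde W$ at points of $B_\delta(\Id)$, capped appropriately to ensure the two-sided quadratic bounds of Definition~\ref{def:vbeta}, and mollify to recover $C^2$. The standard point (this is where \cite{CDMK06,FT02} enter) is that if $\delta$ is chosen small enough relative to $\alpha$, then $\widetilde W$ already coincides with its convex envelope on all of $B_\delta(\Id)$ — no cutting occurs in the neighbourhood — so the extension $V(x,\cdot)\in\mathcal V_\beta$ satisfies \eqref{W=V}, i.e.\ $V(x,F)=W(x,F)+\mu\det F$ for $\dist(F,\SO d)<\delta$, while the construction keeps $V(x,\cdot)\leq\widetilde W(x,\cdot)$ globally (because the convex envelope is a lower bound and $W\geq 0$ controls the cap), giving \eqref{WgeqV}. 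Frame indifference of $V$ is immediate if the extension is performed in a rotationally symmetric way — e.g.\ the construction depends only on the singular values of $F$, or one symmetrises by averaging over $\SO d$ — since $\det$ and $\widetilde W$ are already frame indifferent; alternatively one works on the quotient by $\SO d$ from the start. The bound $\|W\|_{C^3(\overline{U_\alpha})}<1/\alpha$ ensures $V(x,\cdot)\in C^3$ near $\SO d$ as asserted.

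Finally I would address the measurability and the $(E,s)$-layered \& periodic structure. Since the construction of $V(x,\cdot)$ depends on $W(x,\cdot)$ only through finitely many quantities (the value $\mu$ is a universal constant over the class, and the extension is a fixed operator applied to $W(x,\cdot)$), the map $x\mapsto V(x,F)$ is Borel for each $F$; and because $W$ is $(E,s)$-regular \& periodic — hence $W(\cdot,F)$ is constant on each $D_\ell$ of a tessellation that is $(E,s)$-regular and $\Z^d$-periodic — the same holds verbatim for $V(\cdot,F)$, with the \emph{same} tessellation, so $V$ is $(E,s)$-layered \& periodic as claimed.

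The main obstacle — and the one genuinely delicate point — is verifying that no convexification actually takes place inside the neighbourhood $\{\dist(F,\SO d)<\delta\}$, i.e.\ that $\widetilde W(x,\cdot)$ equals its convex envelope there so that equality \eqref{W=V} holds rather than merely the inequality \eqref{WgeqV}. This requires choosing $\delta$, $\mu$, $\beta$ in the right order: $\mu$ large enough (in terms of $\alpha,d$) to beat the skew directions, then $\delta$ small enough (in terms of $\alpha,d,p$) that the strong convexity on $B_\delta(\Id)$ is not destroyed by the lower $p$-growth tail via the affine-supremum extension, using crucially the quantitative bounds (W1) and (W4). Everything else — the quadratic bounds placing $V$ in $\mathcal V_\beta$, the $C^2$ regularisation, frame indifference, and measurability — is routine once this choice is pinned down, and the details are exactly those of \cite[Corollary~2.3]{NS17}.
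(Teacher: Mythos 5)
Your sketch follows essentially the same route as the cited construction from \cite{CDMK06,FT02,NS17}: add $\mu\det$ (the polyconvexity trick) to restore strong convexity of $W(x,\cdot)+\mu\det$ near $\SO d$, extend globally, verify the matching property $V=W+\mu\det$ on a $\delta$-neighbourhood, and observe that the construction is applied pointwise in $x$ with constants uniform over $\mathcal W^p_\alpha$, so the tessellation and periodicity structure of $W$ pass to $V$ unchanged. Those are indeed the ideas behind \cite[Corollary~2.3]{NS17}, and the paper here simply cites that result, so the comparison is against that source rather than a proof in the present text.

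Two points of imprecision are worth flagging. First, your global extension mechanism --- ``take the supremum of all affine functions tangent to $\widetilde W$ at points of $B_\delta(\Id)$, capped appropriately, and mollify'' --- would not by itself produce a member of $\mathcal V_\beta$: the envelope by affine tangents grows only linearly at infinity, and ``capping'' cannot \emph{raise} it to meet the required two-sided quadratic bounds $\beta|F|^2-\beta^{-1}\leq V\leq \beta^{-1}(|F|^2+1)$ and $\beta|G|^2\leq D^2V[G,G]$. The actual construction in \cite{NS17} replaces $\widetilde W$ outside a large ball by a genuine quadratic (the present paper even records this fact in the proof of Lemma~\ref{L:DVabeta}, quoting ``there exists a quadratic function $Q$ and $r>0$ such that $V=Q$ on $\{|F|>r\}$''), which is also what gives $V(x,\cdot)\in C^3(\R^{d\times d})$ \emph{globally}, not merely near $\SO d$ as your last sentence about $C^3$ suggests. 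Second, frame indifference $V(RF)=V(F)$ is invariance under \emph{left} multiplication by rotations, hence equivalent to $V$ depending only on $F^{T}F$; saying the construction ``depends only on the singular values of $F$'' would assert the stronger two-sided isotropy, which is neither assumed of $W$ nor needed. Your fallback --- average over left multiplication by $\SO d$, or simply perform the construction on a rotation-invariant neighbourhood $U_\delta$ using only frame-indifferent ingredients ($\widetilde W$ and the quadratic tail built from $D^2\widetilde W(\Id)$) --- is the right fix. With these two corrections your sketch matches the cited proof.
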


As a rather direct consequence of \eqref{WgeqV}, the convexity of $V$ and the fact that $\R^{d\times d}\ni F\mapsto \det(F)$ is a Null-Lagrangian, we obtain the lower bound
\begin{equation*}
 W_\ho(F)\geq V_\ho^{(1)}(F)-\mu \det(F)\qquad\mbox{for all $F\in\R^{d\times d}$.}
\end{equation*}
Thus, for the proof of the single-cell formula $W_\ho(F)=W_\ho^{(1)}(F)$ (for $F\in\R^{d\times d}$ sufficiently close to $\SO d$) we need to establish the corresponding upper bound. Our argument, which is based on the matching property \eqref{W=V}, requires that the corrector of the convex energy density $V$, i.e.,~the minimizer to
\begin{equation}\label{eq:convex-corr}
H_{\per,0}^1(Q_1,\R^d)\ni\phi\mapsto \fint_{Q_1} V(y,F+\nabla \phi(y))\,dy,
\end{equation}
satisfies a Lipschitz estimate. In \cite{NS17} the Lipschitz estimate is obtained via the implicit function theorem and critically uses smoothness of $W(x,F)$ in $x$. In the present paper, we proceed by a different argument that relies on variational methods and applies to $(E,s)$-regular energy densities.
\subsection{Lipschitz estimates for monotone systems}\label{sec:introlip}
The minimizer  $\phi$ to the convex functional \eqref{eq:convex-corr} is characterized by the associated Euler-Lagrange equation $-\divv(\bfa_F(\nabla \phi))=0$ which is a monotone system with coefficient field $\bfa_F(x,G):=DV(x,F+G)$. In this section we present a Lipschitz estimate for such systems with $(E,s)$-regular coefficients. To be precise, we introduce a class of monotone coefficients:
\begin{definition}[Monotone coefficients]
Fix $\beta\in(0,1]$. We denote by $\mathcal A_\beta^0$ the class of coefficients $\bfa:\R^{d\times d}\to\R^{d\times d}$ satisfying 
\begin{align}
 \bfa(0)=&0,\label{ass:a}\\
 \beta|F-G|^2\leq& \langle \bfa(F)-\bfa(G),F-G\rangle,\label{ass:monotonicity}\\
 \beta|\bfa(F)-\bfa(G)|\leq& |F-G|.\label{ass:lip}
\end{align}
For a given modulus of continuity $\omega:[0,\infty)\to [0,1]$ that is concave, continuous, monotone increasing, and satisfying $\omega(0)=0$, we denote by $\mathcal A_{\beta,\omega}$ the class of coefficients $\bfa\in A_\beta^0$ which additionally satisfy
\begin{equation}
 \beta|D\bfa(F)-D\bfa(G)|\leq \omega(|F-G|).\label{ass:Dareg}
\end{equation}
Moreover, we denote by $\mathcal A_{\beta}$ the class $\mathcal A_{\beta,\omega}$, where the modulus of continuity $\omega$ is given by
\begin{equation}\label{def:omega}
 \omega(t):=\min\{ t,1\}.
\end{equation}
\end{definition}
\begin{theorem}[Lipschitz estimate $(E,s)$-regular coefficients]\label{Th1}
Fix $d\geq2$, $\beta,s\in(0,1]$, $E<\infty$ and $q>d$. Let $\bfa$ be a $(E,s)$-regular coefficient field of class $\mathcal A_\beta$. There exists $\overline \kappa=\overline \kappa(\beta,d,E,q,s)>0$ and $c=c(\beta,d,E,q,s)\in[1,\infty)$ such that if $v\in H^1(B)$ and $f\in L^q(B)$, where $B\subset\R^d$ is a ball of radius $1$, satisfy
\begin{equation}\label{eq:Th1}
  \divv \bfa(\nabla v)= f\qquad\text{in }\mathscr D'(B),
\end{equation}
and the smallness condition
\begin{equation}\label{ass:smallnesTh1}
 \max\{\|\nabla v\|_{ L^2(B)},\|f\|_{L^q(B)}\}\leq \begin{cases}\infty&\mbox{if $d=2$,}\\\overline \kappa&\mbox{if $d\geq3$,}\end{cases}
\end{equation}
then
\begin{equation}\label{est:lipschitzTh1}
 \|\nabla v\|_{L^\infty(\frac12 B)}\leq c(\|\nabla u\|_{L^2(B)} + \|f\|_{L^q(B)}).
\end{equation}
\end{theorem}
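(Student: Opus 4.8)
\textbf{Proof strategy for Theorem~\ref{Th1}.}

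The plan is to combine a classical interior $\varepsilon$-regularity (excess decay) statement for constant-coefficient monotone systems with the $(E,s)$-regularity of the coefficient field, following the Avellaneda--Lin / Li--Vogelius philosophy but at the level of a single ball rather than across scales. The key quantitative input is that near any point $x_0$ the $(E,s)$-regular coefficient field is, in the $L^2$-averaged sense quantified by $E_{x_0,s}$, close to a \emph{layered} coefficient field $\bfa_{x_0}$, and a layered monotone system admits genuine Lipschitz estimates (one-dimensional dependence, so the classical interior theory applies in the transverse variable). So the scheme is: (i) freeze the coefficients locally to the comparison layered field; (ii) use the comparison principle / energy estimates for monotone systems to control $\|\nabla v - \nabla w\|_{L^2(B_r(x_0))}$, where $w$ solves the layered system with the same boundary data, in terms of $E_{x_0,s}\, r^{s}$ and the right-hand side $f$; (iii) invoke the excess decay (Campanato-type) estimate for the layered solution $w$; (iv) absorb the error terms and iterate over dyadic radii $r = 2^{-k}$ down to scale comparable to the geometry, obtaining a Morrey/Campanato bound on $\nabla v$ that upgrades to the $L^\infty$ bound \eqref{est:lipschitzTh1} on $\tfrac12 B$ via Campanato's characterization of Hölder spaces.

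First I would set up the excess $E(\nabla v, B_r(x_0)) := \inf_{F}\big(\fint_{B_r(x_0)}|\nabla v - F|^2\big)^{1/2}$ and prove a \emph{one-step} decay: there is $\theta \in (0,\tfrac12)$ and $C<\infty$ such that
\begin{equation*}
 E(\nabla v, B_{\theta r}(x_0)) \leq \tfrac12\, E(\nabla v, B_r(x_0)) + C\Big( E_{x_0,s}\,r^{s}\,\big(|F_r| + 1\big) + r^{1 - d/q}\|f\|_{L^q(B)}\Big),
\end{equation*}
valid whenever the relevant quantities are below $\overline\kappa$ (this is where the smallness condition \eqref{ass:smallnesTh1} enters, for $d\geq 3$, to keep the comparison solution in the small-excess regime and to control $|F_r|$). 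Here $F_r$ is a near-optimal affine matrix at scale $r$; the term $|F_r|+1$ must be tracked because $\bfa$ is only Lipschitz (not linear), so the frozen comparison coefficient is $\bfa_{x_0}(F_r + \cdot)$, an element of $\mathcal A_\beta$ again, and the classical decay constant for it is uniform in $F_r$ by the structure conditions \eqref{ass:monotonicity}--\eqref{ass:Dareg}. To get the comparison estimate in step (ii) I would test the difference equation for $v - w$ against $v - w$ itself, use strong monotonicity on the left, and on the right split $\bfa(\nabla v) - \bfa_{x_0}(\nabla v)$ — which is supported on $(D_\ell \triangle D_\ell^{x_0})\cap B_r(x_0)$ and bounded (using \eqref{ass:lip} and $\bfa(0)=0$) by $|\nabla v|$ plus lower order — together with a Meyers-type higher integrability of $\nabla v$ to convert the small-measure set into the factor $E_{x_0,s}\,r^s$ by Hölder's inequality.

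Then I would iterate: summing the geometric decay over $r = \theta^k$ gives, for a suitable $\alpha \in (0,s)$ with $\alpha < 1 - d/q$, a bound $\sup_{x_0 \in \tfrac12 B,\ r \leq r_0} r^{-\alpha} E(\nabla v, B_r(x_0)) \lesssim \|\nabla v\|_{L^2(B)} + \|f\|_{L^q(B)}$, i.e.\ $\nabla v \in C^{0,\alpha}(\tfrac12 B)$ by Campanato; in particular $\nabla v \in L^\infty(\tfrac12 B)$ with the stated bound (after a standard covering/rescaling argument to pass from balls centered in $\tfrac12 B$ to the full half-ball, and an interpolation to absorb the $\|\nabla v\|_{L^\infty}$-type terms that appear implicitly in $|F_r|$). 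The main obstacle is step (ii)--(iii) done uniformly in the frozen affine slope $F_r$: because $\bfa$ is nonlinear, the comparison is against $\bfa_{x_0}$ shifted by $F_r$, and one must ensure both that this shifted coefficient still lies in a fixed class $\mathcal A_\beta$ with the \emph{same} constants (true, by the structure conditions) and that the excess decay constant for layered coefficients does not degenerate — this is exactly where the genuine Lipschitz theory for one-dimensional (layered) monotone systems, as opposed to merely Hölder theory, is indispensable, and where for $d \geq 3$ the smallness \eqref{ass:smallnesTh1} cannot be removed. A secondary technical point is justifying the Meyers higher integrability uniformly (it depends only on $\beta, d$, not on the coefficient field), which is standard for monotone systems with the structure \eqref{ass:monotonicity}--\eqref{ass:lip}.
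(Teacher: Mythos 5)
Your high-level skeleton --- compare locally to a layered coefficient field, estimate the comparison error by an energy argument using the measure of the symmetric difference together with Meyers higher integrability, and iterate over dyadic scales --- matches the paper's strategy. But you iterate the wrong quantity, and that is where the argument breaks. You propose a one-step decay for the full-gradient excess $E(\nabla v, B_r) := \inf_F \|\nabla v - F\|_{\underline L^2(B_r)}$, invoking ``the excess decay (Campanato-type) estimate for the layered solution $w$'' and upgrading to $L^\infty$ ``via Campanato's characterization of H\"older spaces.'' This cannot work: for a \emph{layered} coefficient field with no lower bound on the interface spacing (which is exactly what $(E,s)$-regularity permits), the normal derivative $\partial_d w$ of the comparison solution jumps across interfaces, so the full-gradient excess of $w$ does not decay below the scale of those interfaces, and that scale can be arbitrarily small. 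The claimed one-step improvement $E(\nabla v, B_{\theta r}) \leq \tfrac12 E(\nabla v, B_r) + \text{error}$ is therefore false once $r$ is comparable to the layer width, and there is no ``scale comparable to the geometry'' at which the iteration can safely stop. Relatedly, $\nabla v$ is not H\"older continuous here (Theorem~\ref{Th1} asserts only $L^\infty$, and for good reason), so Campanato's characterization cannot be the concluding step.

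The missing idea is to iterate the oscillation not of $\nabla v$ but of the mixed quantity $A := (\nabla' v,\, J_d(v))$ with $J_d(v):=\bfa(\cdot,\nabla v)e_d$ --- tangential derivatives plus normal flux --- which \emph{is} continuous across interfaces. The paper's Proposition~\ref{L:estmorrey} establishes the Campanato-type decay for $\|\nabla' w - (\nabla' w)_{rB}\|$ and $\|J_d(w) - (J_d(w))_{rB}\|$ (not for the full excess) by differentiating the equation only tangentially and reading off the remaining flux components from the equation itself; Proposition~\ref{P:niren} then iterates both $\|A-(A)_{B_k}\|_{\underline L^2(B_k)}$ and the averages $|(A)_{B_k}|$ via a telescoping argument, yielding uniform boundedness of $\|A\|_{\underline L^2(B_k)}$ over all scales rather than H\"older continuity of $\nabla v$. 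The $L^\infty$ bound on $\nabla v$ is finally recovered not by a Campanato embedding but by the pointwise monotonicity estimate of Lemma~\ref{l:edmonoton}, $|\nabla v| \lesssim |\nabla' v| + |J_d(v)| = |A|$. Without this change of variable the one-step decay you write down is unavailable, and the iteration does not close.
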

The proof of Theorem~\ref{Th1} is given in Section~\ref{sec:lipniren}. In the following we review some (partially well-known, partially new) estimates for elliptic systems and give a rough outline of the ideas for the proof of Theorem~\ref{Th1}. For simplicity we restrict in the following to systems with right-hand side $f=0$.
\begin{itemize}
\item (linear \& constant coefficients). In the simplest situation, \eqref{eq:Th1} reduces to a \textit{linear}, strongly elliptic system with \textit{constant coefficients}, i.e.,\ $\bfa(F)=\mathbb L F$, where $\mathbb L$ is a strongly elliptic 4th order tensor. In that case, we may differentiate the equation multiple times via the ``difference-quotient method'', and a combination of Caccioppoli's inequality and Sobolev's inequality yields a priori the local Lipschitz estimate \eqref{est:lipschitzTh1}, e.g.,~see~\cite{GM12}
\item (linear \& layered coefficients). In the case of \textit{linear}, strongly elliptic systems with \textit{layered coefficients},  local a priori Lipschitz-estimates are proven in \cite{CKV86} (see also \cite{LN03}). Next, we outline the proof given in \cite{LN03}: Suppose that the coefficients are $e_d$-layered. Then, it is still possible to differentiate the equation multiple times in directions $e_1,\dots,e_{d-1}$ that are orthogonal to the layers and in combination with a suitable anisotropic Sobolev inequality H\"older-continuity of $\nabla'u:=(\partial_1 u,\dots,\partial_{d-1} u)$ follows. Furthermore, \eqref{eq:Th1} (with $f=0$) yields that certain components of the stress namely
\begin{equation*}
J_d:=(\mathbb L\nabla u)e_d
\end{equation*}
satisfies similar H\"older-estimates as $\nabla'u$. Finally, ellipticity in form of $|\nabla u|\lesssim |\nabla' u| + |J_d|$ yields the desired Lipschitz estimate. For convenience of the reader (and since the result is crucial for the statements in this paper), we present a proof in Appendix~\ref{sec:proofkinderlehrer}.

\item (linear \& perturbation of layered coefficients). In the approach of \cite{LV00,LN03}, one passes from layered to $(E,s)$-regular coefficients with help of a perturbation argument which solely uses the local Lipschitz estimate for solutions of equations with layered coefficients. Let us emphasize that the main effort in \cite{LV00,LN03} is to obtain higher regularity, namely piecewise $C^{1,\alpha}$, estimates. Moreover, we mention that in \cite{BRW10}, gradient $L^p$-estimates are proven under weaker assumptions (in our notation $(E,0)$-regular, with $0<E\ll1$ depending on $p$).
\item (monotone \& constant coefficients). The situation is genuinely different for nonlinear monotone systems. Even in the constant coefficient-case differentiation of the system is possible only once, and  yields a linear system with elliptic, measurable coefficients. Thus, only $L^p$-estimates (with $0<p-2\ll 1$) for the second derivatives of $u$ can be obtained. As a consequence, in $d\geq 3$---even for constant coefficients---we cannot expect Lipschitz estimates in general, e.g.,~see the example of \cite{SY02}. However, solutions still enjoy at least \textit{partial regularity} that is they are smooth except on a set of small measure, see e.g.,~\cite{Giaquinta83,Giu03} (see \cite{Ming07} for a modern overview of the literature and the main techniques). Classically partial regularity results are proven as consequences of so-called \textit{$\e$-regularity} statements, which are roughly of the following type: For every $\alpha\in(0,1)$ there exists $\e>0$ such that if $u\in H^1(B_R(0))$ satisfies 
\begin{equation}\label{eq:modelpartialreg}
 \divv \bfa(\nabla u)=0\quad\mbox{in $\mathscr D'(B_R(0))$}\qquad\mbox{and}\qquad \|\nabla u-(\nabla u)_{B_R(0)}\|_{\underline L^2(B_R(0))}\leq \e,
\end{equation}
then $u\in C^{1,\alpha}(B_\frac{R}{2})$ (here and for the rest of the paper $\|\cdot\|_{\underline L^p(A)}$ denotes the normalized $L^p$-norm defined in \eqref{def:barlp}). This can be proven in the following way (for precise proofs see e.g.,~\cite{GM12,Giaquinta83,Giu03}): By differentiating the equation in \eqref{eq:modelpartialreg}, we obtain 
$$\divv D\bfa(\nabla u)\nabla (\partial_i u)=0\quad\mbox{in $\mathscr D'(B_R(0))$}$$
for $i=1,\dots,d$. Then, one compares $\partial_i u$ to the unique solution $v_i\in \partial_i u + H_0^1(B_R(0))$ of $\divv D\bfa ((\nabla u)_{B_R(0)})\nabla v_i=0$. Since, $v_i$ solves a linear elliptic system with constant coefficients, it satisfies good a priori estimates. Finally, if $\nabla u$ and $(\nabla u)_{B_R(0)}$ are sufficiently close (in the sense of \eqref{eq:modelpartialreg}) the solutions $\partial_i u$ and $v_i$ are sufficiently close to start a suitable iteration which yields the desired result. 
\item (monotone \& layered coefficients). For monotone systems with layered coefficients, it is possible to adapt the scheme of the previous bullet point by appealing to Lipschitz estimates for layered linear systems. Roughly speaking, we obtain the following: For a $e_d$-layered coefficient field $\overline\bfa$ of class $\mathcal A_\beta$, there exist $\alpha\in(0,1)$ and $\e>0$ such that
\begin{equation*}
 \divv \overline\bfa(x_d,\nabla u)=0\quad\mbox{in $\mathscr D'(B_R(0))$}\qquad\mbox{and}\qquad \|\nabla' u-(\nabla' u)_{B_R(0)}\|_{\underline L^2(B_R(0))}\leq \e,
\end{equation*}
imply $\nabla'u\in C^{0,\alpha}(B_\frac{R}{2}(0))$, $\overline\bfa(\cdot,\nabla u)e_d\in C^{0,\alpha}(B_\frac{R}{2}(0))$ and $\nabla u\in L^\infty(B_\frac{R}{2})$, see Proposition~\ref{L:estmorrey} and Corollary~\ref{P:nonlinearlayer}. We present the proof of this results, which are new to our knowledge, in Section~\ref{sec:estnonlinearlayer}. Let us mention that in the case of nonlinear monotone (scalar) equations, Lipschitz estimates for layered coefficients are proven in \cite{BK16} (without smallness assumption but by appealing to De Giorgi-Nash-Moser regularity, the latter is not available in the vectorial case). 
\item (monotone \& $(E,s)$-regular coefficients). The passage from layered coefficients to $(E,s)$-regular coefficients relies on a careful perturbation procedure  (which is inspired by \cite{KM12} and \cite{BK17}). After a suitable rotation, we may approximate the coefficient field $\bfa$ by a coefficient field $\bar \bfa$ which is $e_d$-layered. Then we compare the quantity $A:=(\nabla' u, \bfa(\cdot,\nabla u)e_d)$ with $\bar A:=(\nabla' \bar u,\bar\bfa(\cdot,\nabla \bar u))$ where $\bar u$ is a suitable solution of $\divv \bar \bfa(\cdot,\nabla u)=0$ which is close to $u$. As mentioned in the previous bullet point, smallness of $\nabla \bar u$ in $L^2$ yields continuity of $\bar A$ and by an approximation argument we obtain (pointwise) boundedness of $A$ and thus the desired Lipschitz estimates for $u$. We note that the idea of considering a quantity of type $A$ appeared already in the literature in \cite{BK17} (in a scalar situation). Moreover, we mention that in \cite{BK16} gradient $L^p$, $p<\infty$ are derived for scalar monotone operators under weaker assumptions on the coefficients (in our notation $(E,0)$-regularity with $0<E$ depending on $p$). 
\end{itemize}
Next, we apply the Lipschitz estimate of Theorem~\ref{Th1} to homogenization. We first recall the definition of correctors for monotone systems:
\begin{lemma}[Homogenized coefficients and correctors for monotone systems] \label{L:phi}
Fix $d\geq 2$ and $\beta\in(0,1]$.  Let $\bfa$ be a periodic coefficient field of class $\mathcal A_{\beta}^0$. 
  \begin{itemize}
  \item (Corrector). For all $F\in\R^{d\times d}$ there exists a unique solution in $\phi(F)\in H_{\per,0}^1({Q_1},\R^d)$ to
    \begin{equation*}
      \divv \bfa(F+\nabla \phi(F))=0\quad\mbox{in $\mathscr D'(\R^d)$}.
    \end{equation*}
  \item (Homogenized coefficient). The homogenized coefficient $\bfa_0:\R^{d\times d}\to\R^{d\times d}$ defined by
    \begin{equation*}
      \bfa_0(F):=\fint_{Q_1} \bfa(y,F+\nabla \phi(y,F))\,dy\qquad\mbox{for all $F\in\R^{d\times d}$}
    \end{equation*}
    satisfies $\bfa_0\in\mathcal A_{\beta^3}^0$. Moreover, if $\bfa$ is a periodic coefficient field of class $\mathcal A_{\beta,\omega}$, then $\bfa_0\in \mathcal A_{\beta',\omega'}$ for some $\beta'=\beta'(\beta,d)>0$ and a modulus of continuity $\omega'$, given by $\omega'(\cdot)=\omega(c\cdot)^\frac1q$ with $c=c(\beta,d)\in[1,\infty)$ and $q=q(\beta,d)\in[1,\infty)$. 
  \item (Flux-corrector). For all $F\in\R^{d\times d}$ there exists a unique solution $\sigma(F)\in H_{\per,0}^1({Q_1},\R^{d\times d\times d})$ to (using Einstein's summation convention)
    \begin{align*}
      -\Delta \sigma_{ijk}(F) =& \partial_k J(F)_{ij}-\partial_j J(F)_{ik},\\
      \sigma_{ijk}(F)=&-\sigma_{ikj}(F),\\
      -\partial_k \sigma_{ijk}(F)=&J(F)_{ij},
    \end{align*}
    in $\mathscr D'(\R^d)$ where $J(x,F):=\bfa(x,F+\nabla \phi(x,F)) - \bfa_0(F)$.
  \item (Lipschitz continuity in $F$). There exists $c=c(\beta)<\infty$ such that
    \begin{equation}\label{est:lipphisigma}
      \|\nabla \phi(F)-\nabla \phi(G)\|_{L^2(Q_1)}+\|\nabla \sigma(F)-\nabla \sigma(G)\|_{L^2(Q_1)}\leq c|F-G|.
    \end{equation}
  \end{itemize}
\end{lemma}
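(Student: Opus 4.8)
The plan is to read Lemma~\ref{L:phi} as a sequence of by-now standard arguments from monotone operator theory and periodic homogenization, the only genuinely delicate point being the refined modulus of continuity $\omega'$.

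\emph{Step 1 (existence and uniqueness of the corrector).} For fixed $F$ I would consider the operator $T_F$ from $H^1_{\per,0}(Q_1,\R^d)$ into its dual defined by $\langle T_F\phi,\psi\rangle:=\int_{Q_1}\bfa(F+\nabla\phi)\cdot\nabla\psi$. By \eqref{ass:lip} it is Lipschitz, hence hemicontinuous; by \eqref{ass:monotonicity} it is strongly monotone with constant $\beta$, $\langle T_F\phi-T_F\psi,\phi-\psi\rangle\ge\beta\|\nabla\phi-\nabla\psi\|_{L^2(Q_1)}^2$; and since $\int_{Q_1}\nabla\psi=0$, combining this with \eqref{ass:monotonicity} one checks coercivity. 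The Browder--Minty theorem then yields a unique $\phi(F)\in H^1_{\per,0}(Q_1,\R^d)$ with $T_F\phi(F)=0$, which is precisely the corrector equation. Because $\bfa(0)=0$ by \eqref{ass:a}, uniqueness forces $\phi(0)=0$ and hence $\bfa_0(0)=0$.

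\emph{Step 2 ($\bfa_0\in\mathcal{A}_{\beta^3}^0$ and $L^2$-Lipschitz dependence of $\nabla\phi$ on $F$).} I would test the corrector equations for $F$ and for $G$ with $\phi(F)-\phi(G)$ and subtract, obtaining $\int_{Q_1}\langle\bfa(F+\nabla\phi(F))-\bfa(G+\nabla\phi(G)),\nabla\phi(F)-\nabla\phi(G)\rangle=0$. Writing $\Delta:=F-G+\nabla\phi(F)-\nabla\phi(G)$ (whose mean is $F-G$), this identity together with \eqref{ass:monotonicity}, \eqref{ass:lip} and Cauchy--Schwarz gives $\beta\|\Delta\|_{L^2(Q_1)}^2\le\beta^{-1}|F-G|\,\|\Delta\|_{L^2(Q_1)}$, hence $\|\Delta\|_{L^2(Q_1)}\le\beta^{-2}|F-G|$, which is the $\nabla\phi$-part of \eqref{est:lipphisigma}. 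The same identity yields $\langle\bfa_0(F)-\bfa_0(G),F-G\rangle=\int_{Q_1}\langle\bfa(F+\nabla\phi(F))-\bfa(G+\nabla\phi(G)),\Delta\rangle\ge\beta\|\Delta\|_{L^2(Q_1)}^2\ge\beta|F-G|^2$, while $|\bfa_0(F)-\bfa_0(G)|\le\fint_{Q_1}|\bfa(F+\nabla\phi(F))-\bfa(G+\nabla\phi(G))|\le\beta^{-1}\|\Delta\|_{L^2(Q_1)}\le\beta^{-3}|F-G|$. Since $\beta\le1$ these three facts place $\bfa_0$ in $\mathcal{A}_{\beta^3}^0$.

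\emph{Step 3 (flux corrector and its Lipschitz dependence).} Here I would use that $J(F):=\bfa(\cdot,F+\nabla\phi(F))-\bfa_0(F)$ is $Q_1$-periodic, has vanishing mean (by the definition of $\bfa_0$), and satisfies $\partial_kJ(F)_{ik}=0$ in $\mathscr D'(\R^d)$ by the corrector equation. Thus the right-hand side $\partial_kJ(F)_{ij}-\partial_jJ(F)_{ik}$ of the $\sigma$-system is a mean-zero periodic element of $H^{-1}$, so the periodic Laplacian provides a unique $\sigma(F)\in H^1_{\per,0}(Q_1,\R^{d\times d\times d})$ solving it; antisymmetry $\sigma_{ijk}(F)=-\sigma_{ikj}(F)$ follows from uniqueness and the antisymmetry of the right-hand side in $(j,k)$. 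Applying $\partial_k$ and using $\partial_kJ(F)_{ik}=0$ gives $\Delta(\partial_k\sigma_{ijk}(F)+J(F)_{ij})=0$, and since this quantity is periodic and mean-zero it vanishes (Fourier series), which is the third identity. For the Lipschitz bound, $\sigma(F)-\sigma(G)$ solves the same equation with $J(F)-J(G)$ on the right, so the energy estimate for the periodic Laplacian gives $\|\nabla\sigma(F)-\nabla\sigma(G)\|_{L^2(Q_1)}\lesssim\|J(F)-J(G)\|_{L^2(Q_1)}\le\beta^{-1}\|\Delta\|_{L^2(Q_1)}+|\bfa_0(F)-\bfa_0(G)|\lesssim|F-G|$ by Step~2, completing \eqref{est:lipphisigma}.

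\emph{Step 4 (the refined modulus $\omega'$; the main obstacle).} Assuming in addition $\bfa\in\mathcal{A}_{\beta,\omega}$ (so $\bfa\in C^1$ with $D\bfa$ $\beta$-elliptic by \eqref{ass:monotonicity} and \eqref{ass:Dareg}), I would first show that $F\mapsto\phi(F)$ is Fr\'echet differentiable into $H^1_{\per,0}$, with $\psi:=D_F\phi(F)[G]$ the unique solution of the linear system $\divv(D\bfa(F+\nabla\phi(F))[G+\nabla\psi])=0$, via a difference-quotient argument using uniform continuity of $D\bfa$; then $\bfa_0\in C^1$ with $D\bfa_0(F)[G]=\fint_{Q_1}D\bfa(F+\nabla\phi(F))[G+\nabla\psi]$. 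Estimating $D\bfa_0(F)-D\bfa_0(F')$ then splits into a term controlled by $\fint_{Q_1}\omega(|F-F'|+|\nabla\phi(F)-\nabla\phi(F')|)\,|G+\nabla\psi|$ and a term involving $\nabla\psi_F-\nabla\psi_{F'}$, the latter bounded by a standard perturbation estimate for the linear system in terms of the same $\omega$-quantity. The hard part is to convert this $\omega$-quantity into a modulus of the form $\omega'(t)=\omega(ct)^{1/q}$: I would do so using Jensen's inequality and concavity of $\omega$, the $L^2$-Lipschitz bound on $\nabla\phi$ from Step~2, and a uniform Meyers higher-integrability estimate $\nabla\phi(F),\nabla\psi\in L^{2q'}$ (with $q'>1$ depending on $\beta,d$) to absorb the factor $|G+\nabla\psi|$ via H\"older's inequality, the exponent $1/q$ emerging from balancing the H\"older exponents. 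This quantitative interplay between Meyers' estimate and the concavity of $\omega$ is the only point requiring care; the remaining verifications are routine.
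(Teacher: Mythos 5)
Your proposal is correct and follows essentially the same route as the paper. Steps 1--3 are the standard Browder--Minty existence, energy/testing estimates for $\bfa_0\in\mathcal A_{\beta^3}^0$ and the $L^2$-Lipschitz bound \eqref{est:lipphisigma}, and the flux-corrector construction, which the paper simply cites from \cite{CPZ05,NS17}; your Step~4 matches the argument the paper actually writes out in Appendix~\ref{sec:rega}, namely the representation $D\bfa_0(F)[G]=\fint_{Q_1}\mathbb L_F(G+\nabla\psi_G(F))$, the bound $\|\mathbb L_{F_1}-\mathbb L_{F_2}\|_{L^1}\lesssim\omega(c|F_1-F_2|)$ via \eqref{ass:Dareg}, concavity of $\omega$ and \eqref{est:lipphisigma}, and a Meyers higher-integrability estimate for $\nabla\psi_G(F)$ combined with H\"older to produce $\omega'(\cdot)=\omega(c\cdot)^{1/q}$ with $q=\tfrac{2(2+\mu)}{\mu}$.
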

The proof of Lemma~\ref{L:phi} is standard and can be found e.g.,~in \cite{CPZ05,NS17}. In the appendix, we present a proof for the fact that $\bfa\in \mathcal A_{\beta',\omega'}$, see Section~\ref{sec:rega}. 

In combination with the Lipschitz estimate Theorem~\ref{Th1} we obtain as an immediate consequence a Lipschitz bound on the correctors:
\begin{corollary}[Lipschitz estimate for the corrector]\label{C:lipcorrector}
Fix $d\geq2$, $\beta,s\in(0,1]$ and $E<\infty$. Let $\bfa$ denote a periodic and $(E,s)$-regular coefficient field of class $\mathcal A_\beta$. Then, there exists $\overline \kappa=\overline \kappa (\beta,d,E,s)>0$ and $c=c(\beta,d,E,s)\in[1,\infty)$ such that
\begin{equation*}
 |F|\leq \begin{cases}
          \infty&\mbox{if $d=2$,}\\
          \overline \kappa&\mbox{if $d\geq3$,}
         \end{cases}
\end{equation*}
implies,
\begin{equation*}
 \|\phi(F)\|_{W^{1,\infty}(Q_1)}\leq c|F|.
\end{equation*}
\end{corollary}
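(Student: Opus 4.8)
The plan is to deduce Corollary~\ref{C:lipcorrector} directly from Theorem~\ref{Th1} applied to the corrector equation. Fix $F\in\R^{d\times d}$ with $|F|$ admissible (i.e.\ $|F|\le\overline\kappa$ in dimension $d\ge3$, arbitrary for $d=2$, where $\overline\kappa$ comes from Theorem~\ref{Th1}). By Lemma~\ref{L:phi}, the corrector $\phi=\phi(F)\in H^1_{\per,0}(Q_1,\R^d)$ solves $\divv\bfa(\cdot,F+\nabla\phi)=0$ in $\mathscr D'(\R^d)$. The natural move is to introduce the shifted coefficient $\tilde\bfa(x,G):=\bfa(x,F+G)-\bfa(x,F)$, so that $\tilde\bfa$ is again $(E,s)$-regular (it inherits the same discontinuity interfaces, since we only shifted in the $G$-variable and subtracted an $x$-dependent constant, and the monotonicity/Lipschitz/$D\bfa$-regularity constants in $\mathcal A_\beta$ are preserved because $D\tilde\bfa(x,G)=D\bfa(x,F+G)$ and $\omega(t)=\min\{t,1\}$ is shift-invariant), and $\tilde\bfa(x,0)=0$. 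Then $\phi$ solves $\divv\tilde\bfa(\cdot,\nabla\phi)=f_F$ with right-hand side $f_F:=-\divv\bigl(\bfa(\cdot,F)\bigr)$. This is problematic because $\bfa(\cdot,F)$ is only piecewise constant, so $f_F$ is a distribution (a sum of surface measures on the interfaces), not an $L^q$ function, and Theorem~\ref{Th1} requires $f\in L^q$.

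To avoid the distributional right-hand side I would instead keep the equation in the form $\divv\bfa(\cdot,F+\nabla\phi)=0$ with $f=0$, but view $\nabla\phi$ as the gradient of a solution of a monotone system whose coefficient is the genuinely $F$-shifted object $\bfa_F(x,G):=\bfa(x,F+G)$; this $\bfa_F$ no longer satisfies $\bfa_F(x,0)=0$, so it is not in $\mathcal A_\beta^0$. The correct remedy is the standard trick of absorbing the affine part: set $w(x):=Fx+\phi(x)$ so that $\nabla w=F+\nabla\phi$ and $\divv\bfa(\cdot,\nabla w)=0$; then apply Theorem~\ref{Th1} \emph{to the function $w$ on a unit ball $B\subset Q_1$} (or better, on a ball of radius $1$ centered in $\R^d$, using periodicity to extend $\phi$, hence $w$, to all of $\R^d$). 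Theorem~\ref{Th1} gives, for $d\ge3$ under the smallness condition $\|\nabla w\|_{L^2(B)}\le\overline\kappa$,
\begin{equation*}
\|\nabla w\|_{L^\infty(\tfrac12 B)}\le c\,\|\nabla w\|_{L^2(B)}.
\end{equation*}
Here one uses $f=0\in L^q$ trivially. The smallness condition is met because by the energy estimate for the corrector (Caccioppoli / testing the equation with $\phi$ and using \eqref{ass:monotonicity}, \eqref{ass:lip}, \eqref{ass:a}) one has $\|\nabla\phi(F)\|_{L^2(Q_1)}\le\beta^{-2}|F|$, whence $\|\nabla w\|_{L^2(B)}\le\|\nabla w\|_{L^2(Q_1)}\le|F|+\beta^{-2}|F|\lesssim|F|$; so choosing $\overline\kappa$ in the Corollary small enough (in terms of the $\overline\kappa$ of Theorem~\ref{Th1}) guarantees admissibility. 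Covering $Q_1$ by finitely many such half-balls (the number depending only on $d$) and using periodicity, one concludes $\|\nabla w\|_{L^\infty(Q_1)}=\|F+\nabla\phi(F)\|_{L^\infty(Q_1)}\le c|F|$, hence $\|\nabla\phi(F)\|_{L^\infty(Q_1)}\le(c+1)|F|$. Finally, since $\int_{Q_1}\phi(F)=0$, Poincaré upgrades this to the full $W^{1,\infty}$ bound $\|\phi(F)\|_{W^{1,\infty}(Q_1)}\le c|F|$, with $c$ depending only on $\beta,d,E,s$ (the $q$-dependence disappears because we only ever use $f=0$, for which any fixed $q>d$ works).

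The main obstacle is the bookkeeping around the smallness condition and the covering: one must verify that the $L^2$-smallness required by Theorem~\ref{Th1} on a \emph{unit} ball follows from $|F|\le\overline\kappa$ with a clean constant, which needs the a priori corrector energy estimate, and one must make sure the local $L^\infty$ bounds on finitely many half-balls patch together on the whole torus --- this is where periodicity of $\phi$ is essential, since Theorem~\ref{Th1} is an interior estimate and $Q_1$ has no boundary once we pass to $\R^d$. A secondary point to check carefully is that $\bfa$, as an $(E,s)$-regular coefficient field, remains $(E,s)$-regular after the translation $G\mapsto F+G$ in the argument --- but as noted this is immediate since the spatial tessellation is untouched and the structural constants of $\mathcal A_\beta$ are translation-invariant in the matrix variable.
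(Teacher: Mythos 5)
Your final argument is correct and is precisely the one the paper has in mind (it declares the proof ``obvious and left to the reader''): one applies Theorem~\ref{Th1} with $f=0$ to $w(x):=Fx+\phi(x,F)$, which solves $\divv\bfa(\cdot,\nabla w)=0$ on all of $\R^d$ by periodicity, and uses the a priori energy bound $\|\nabla\phi(F)\|_{L^2(Q_1)}\le\beta^{-2}|F|$ (from testing the corrector equation with $\phi$ and \eqref{ass:a}--\eqref{ass:lip}) to verify the smallness condition, then covers $Q_1$ by finitely many half-balls and finishes with Poincar\'e. You correctly recognized that your first attempt---writing the equation as $\divv\tilde\bfa(\cdot,\nabla\phi)=-\divv\bfa(\cdot,F)$ with a distributional right-hand side---would not fit the $L^q$ hypothesis of Theorem~\ref{Th1} and abandoned it; that paragraph is a dead end but does no harm since the final route is complete.
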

(The proof of Corollary~\ref{C:lipcorrector} is obvious and left to the reader.)

By combining Theorem~\ref{Th1} with the regularity theory of Avellaneda and Lin we obtain the following uniform Lipschitz estimate:
\begin{theorem}[Uniform Lipschitz estimate II]\label{T:3}
Fix $d\geq 2$, $\beta,s\in(0,1]$, $E<\infty$ and $q>d$. Let $\bfa$ be a $(E,s)$-regular \& periodic coefficient field of class $\mathcal A_\beta$. There exists $\overline \kappa_0=\overline \kappa_0(\beta,d,E,q,s)>0$ and $c=c(\beta,d,E,q,s)\in[1,\infty)$ such that if $u\in H^1(B_R)$ and $f\in L^q(B_R)$, for some $R>0$, satisfy
\begin{equation}\label{eq:uepsf}
 \divv \bfa(\tfrac{x}\e,\nabla u)= f\qquad \mbox{in $\mathscr D'(B_R)$,}
\end{equation}
and the smallness condition
\begin{equation}\label{eq:uepssmall}
 \max\{\|\nabla u\|_{\underline L^2(B_R)},R\|f\|_{\underline L^q(B_R))}\}\leq\begin{cases}
                                      \infty&\mbox{if $d=2$}\\
                                      \overline \kappa_0&\mbox{if $d\geq3$}
                                     \end{cases},
\end{equation}
then
\begin{equation}\label{est:liplarge}
  \|\nabla u\|_{L^\infty(B_\frac{R}{2})}\leq c(\|\nabla u\|_{\underline L^2(B_R)}+ R\|f\|_{\underline L^q(B_R)}).
\end{equation}
\end{theorem}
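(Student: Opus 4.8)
The plan is to combine the single-scale (unit-ball) Lipschitz estimate of Theorem~\ref{Th1} with the large-scale regularity theory of Avellaneda--Lin for periodic monotone systems, exploiting the natural scale separation: the estimate of Theorem~\ref{Th1} handles the microscopic regime $r\lesssim\e$, while Avellaneda--Lin-type arguments (in the form of the intrinsic excess decay of Proposition~\ref{P:largescale:holder}) handle the macroscopic regime $\e\lesssim r\leq R$. By the usual rescaling $x\mapsto x/R$ we may assume $R=1$ (this absorbs the factors $R\|f\|_{\underline L^q}$ on the right and introduces no change in the periodicity of $\bfa(\cdot/\e)$ beyond relabeling $\e$). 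We may also reduce to the case of periodic coefficients of class $\mathcal A_{\beta,\omega}$ with the modulus $\omega$ inherited (via Lemma~\ref{L:phi}) by the homogenized coefficient, so that $\bfa_0\in\mathcal A_{\beta',\omega'}$ enjoys classical interior $C^{1,\alpha}$ regularity.

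The key steps, in order, are as follows. First, fix a base point $x_0\in B_{1/2}$ and consider the intrinsic excess $E(\nabla u,B_r(x_0))$ defined in \eqref{def:excess}. Second, run the macroscopic excess-decay: Proposition~\ref{P:largescale:holder} gives, for all $\e\leq r\leq R=1$, a decay $E(\nabla u,B_r(x_0))\lesssim (r)^{\alpha}\big(\|\nabla u\|_{\underline L^2(B_1)}+\|f\|_{\underline L^q(B_1)}\big)$ together with a bound on the flat part, i.e.\ on the minimizing affine slope $F_r$, uniformly in $r\geq\e$; the smallness condition \eqref{eq:uepssmall} is exactly what is needed to enter this regime (for $d\geq3$) and to keep the linearization around $\bfa_0$ under control. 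Third, stop the macroscopic iteration at scale $r\simeq\e$: at that scale we have a good affine approximation with slope $F_\e$ of controlled size and an excess $E(\nabla u,B_\e(x_0))$ bounded by $\big(\|\nabla u\|_{\underline L^2(B_1)}+\|f\|_{\underline L^q(B_1)}\big)$ times a fixed constant. Fourth, rescale to the unit ball: set $v(y):=\e^{-1}u(x_0+\e y)$ on a unit ball $B$, so that $\divv\bfa(y,\nabla v)=\e f(x_0+\e y)=:\tilde f$ with $\|\tilde f\|_{L^q(B)}=\e^{1-d/q}\,\e^{d/q}\|f\|_{\underline L^q(B_\e(x_0))}\lesssim\|f\|_{\underline L^q(B_1)}$ (using $q>d$), and $\|\nabla v\|_{L^2(B)}\lesssim E(\nabla u,B_\e(x_0))+|F_\e|\lesssim \|\nabla u\|_{\underline L^2(B_1)}+\|f\|_{\underline L^q(B_1)}$; here we also use that $\bfa(y,\cdot)$ is an $(E,s)$-regular coefficient field of class $\mathcal A_\beta$ (the microstructure of $\bfa(\cdot/\e)$ rescaled by $\e$ is the unit-scale microstructure of $\bfa$). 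Now apply Theorem~\ref{Th1} directly to $v$: the smallness hypothesis \eqref{ass:smallnesTh1} is met (for $d\geq3$) by shrinking $\overline\kappa_0$ if necessary, and we obtain $\|\nabla v\|_{L^\infty(\frac12 B)}\lesssim \|\nabla v\|_{L^2(B)}+\|\tilde f\|_{L^q(B)}\lesssim \|\nabla u\|_{\underline L^2(B_1)}+\|f\|_{\underline L^q(B_1)}$. Fifth, undo the rescaling: $\|\nabla u\|_{L^\infty(B_{\e/2}(x_0))}=\|\nabla v\|_{L^\infty(\frac12 B)}\lesssim\|\nabla u\|_{\underline L^2(B_1)}+\|f\|_{\underline L^q(B_1)}$, with a constant independent of $x_0$ and $\e$. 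Taking the supremum over $x_0\in B_{1/2}$ and undoing the initial $R$-rescaling yields \eqref{est:liplarge}.

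The main obstacle is the matching of the two regimes at scale $r\simeq\e$: one must produce, from the macroscopic excess decay, not only smallness of the excess at scale $\e$ but also an $L^\infty$-type control (or at least $L^2$-control on the unit-scale rescaling) of the slope $F_\e$, and one must verify that the ``smallness'' required by Theorem~\ref{Th1} at the unit scale is propagated by the choice of $\overline\kappa_0$. Concretely, the delicate point is that the intrinsic excess in Proposition~\ref{P:largescale:holder} is nonlinear/anisotropic (it measures closeness of $\nabla u$ to $F+\nabla\phi(\cdot,F)$ rather than to a constant), so translating its decay into a clean bound for $\|\nabla v\|_{L^2(B)}$ after rescaling requires the Lipschitz bound on $F\mapsto\nabla\phi(F)$ from \eqref{est:lipphisigma} and a careful bookkeeping of how the harmonic-coordinate change interacts with the rescaling. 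A secondary technical issue is that $\e^{-1}$ need not be an integer in Theorem~\ref{T:3} (unlike in Theorem~\ref{T:Lipschitzeps}), so the rescaled coefficient $\bfa(y,\cdot)$ on $B$ is $\e^{-1}$-periodic; but since Theorem~\ref{Th1} is purely a unit-ball statement about $(E,s)$-regular coefficients and does not see periodicity, this causes no trouble, and the periodicity is only used in the macroscopic step via Proposition~\ref{P:largescale:holder}. For $d=2$ no smallness is needed because the unit-scale Lipschitz estimate and the excess decay both hold unconditionally there, so the whole scheme goes through with $\overline\kappa_0=\infty$.
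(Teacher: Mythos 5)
Your strategy is the same as the paper's: rescale to unit period, use large-scale (homogenization-based) regularity to control $\|\nabla u\|_{\underline L^2}$ down to scale $\e$, then switch to the unit-scale $(E,s)$-regular estimate (Theorem~\ref{Th1}, proved via Proposition~\ref{P:niren}) and conclude by translation and Lebesgue differentiation. The one real gap is in how you package the macroscopic step. You write that Proposition~\ref{P:largescale:holder} yields, for all $\e\leq r\leq 1$, an excess decay involving $\|f\|_{\underline L^q}$ \emph{and} a bound on the minimizing slope $F_r$, but that proposition is stated and proved for $f=0$ and says nothing about $F_r$. The paper addresses exactly these two points in a separate statement, Corollary~\ref{P:largeholer1}: to admit $f\neq 0$ it compares $u$ on each scale $r$ with the $\bfa$-harmonic function sharing its boundary data (the error is controlled by $\|\divv\bfa(\nabla u)\|_{\underline H^{-1}(Q_r)}$, which is in turn dominated by $r^{1-d/q}R\|f\|_{\underline L^q}$ via a Poisson-equation comparison, just as in your small-scale step), iterates the resulting perturbed one-step improvement, and then controls $|F_r|$ by a telescoping argument using \eqref{est:lipphisigma}. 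You do flag this matching at scale $\e$ as ``the main obstacle'' and correctly identify that \eqref{est:lipphisigma} is needed, so you are aware the ingredient is missing; but the proposal stops short of carrying it out, and without it the jump from Proposition~\ref{P:largescale:holder} to your ``$\|\nabla v\|_{L^2(B)}\lesssim |F_\e|+E(\nabla u,B_\e(x_0))\lesssim\cdots$'' is not justified. The remaining steps (the rescaling bookkeeping for $\tilde f$ using $q>d$, the $d=2$ case without smallness, the use of Theorem~\ref{Th1} rather than its constituent Proposition~\ref{P:niren}) are all sound and match the paper.
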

(For the proof see Section~\ref{sec:lip:hom})

\begin{remark}\label{rem:kristensen}
 We recall that without a smallness condition of the form \eqref{eq:uepssmall} the Lipschitz estimate \eqref{est:liplarge} may fail even in the case of spatially homogeneous coefficients. Uniform regularity estimates for solutions of \eqref{eq:uepsf} are obtained in \cite{KM08} in weaker norms without any smallness assumption. More precisely, assuming that $\bfa$ is a periodic coefficient field of class $\mathcal A_\beta^0$ that satisfies some mild regularity conditions in the space variable, the authors of \cite{KM08}  lift the $W^{2,2+\delta}$-regularity of the homogenized operator $\bfa_0$ to the heterogeneous problem and obtain estimates for $\nabla u$ in certain Morrey seminorms $\|\cdot\|_{L^{2,q}}$ with $0<q-2\ll1$ (in fact the main effort in \cite{KM08} is to obtain explicit lower bounds on $\delta>0$  independent of the dimension $d\geq3$). 
\end{remark}

\subsection{Notation.}\label{sec:notation}

For given $x\in\R^d$ and $R>0$, we denote by $B_R(x)$ (and $B_R$ for $x=0$) the open Euclidean ball with center $x$ and radius $R$. For given $r>0$ and any open ball $B\subset\R^d$ with $B=B_R(x)$ for some $x\in\R^d$ and $R>0$, we set $rB:=B_{rR}(x)$. Similarly, for given $x\in\R^d$ and $R>0$, we set $Q_R(x):=x_0+(-\frac12 R,\frac12 R)^d$. We denote by $f\cdot g$ (resp. $\langle F,G\rangle$) the standard scalar product in $\R^d$ (resp. $\R^{d\times d}$). For $f:\R^d\times \R^{d\times d}\to\R^n$ with $n\in\N$, we denote
\begin{itemize}
 \item by $\nabla f$ the Jacobian matrix, i.e., the derivative with respect to the spatial variable: $(\nabla f(x,F))_{ij}=\partial_{x_j}f_i(x,F)=\partial_jf_i(x,F)$.
 \item by  $D^kf$ the $k$-th Fr\'echet-derivative with respect to the second component. If $n=1$, we identify the linear (resp. bilinear) map $Df(\cdot,F)$ (resp. $D^2f(\cdot, F)$) with the matrix (resp. fourth order tensor) defined by $Df(\cdot,F)[G]=\langle Df(\cdot,F),G\rangle$ (resp. $D^2f(\cdot,F)[G,H]=\langle D^2f(\cdot,F)G,H\rangle$).
\end{itemize}
For any measurable set $E\subset\R^d$, we denote by $|E|$ the Lebesgue measure of $E$. For any bounded Lipschitz domain $A\subset\R^d$, we set $\fint_A :=|A|^{-1}\int$. For every $p\in[1,\infty)$, we introduce the following normalized $L^p(A)$-norm of a function $f\in L^p(A)$
\begin{equation}\label{def:barlp}
 \|f\|_{\underline L^p(A)}:=\left(\fint_A |f(x)|^p\,dx\right)^\frac1p.
\end{equation}
For every $f\in L^1(A)$, we set $(f)_A:=\fint_A f(x)\,dx$. For a function $u\in W^{1,p}(A,\R^d)$, $p \in[1,\infty]$ we denote by $\nabla u:=(\partial_1 u,\dots,\partial_d u)$ the weak gradient of $u$. Furthermore, we denote by $\nabla' u:=(\partial_1u,\dots,\partial_{d-1}u)$ the matrix-field of weak derivative of $u$ except $\partial_d u$. We denote by $H^{-1}(A)$ the dual of $H_0^1(A)$ and use the following normalized version of the $H^{-1}(A)$ norm given by
\begin{equation}\label{def:hminus}
 \|F\|_{\underline H^{-1}(A)}:=\sup\left\{|\fint_A F\varphi\,dx|\,:\, \varphi\in H_0^1(A),\, \|\nabla \varphi\|_{\underline L^2(A)}\leq1\right\}.
\end{equation}

A measurable function $u:\R^d\to\R$ is called $Q_1$-periodic, if it satisfies $u(y+z)=u(y)$ for almost every $y\in\R^d$ and all $z\in\Z^d$. We consider the function spaces
\begin{align*}
 L_\per^p({Q_1})&:=\{u\in L_{\loc}^p(\R^d)\, |\, u\mbox{ is ${Q_1}$-periodic}\},\quad L_{\per,0}^p({Q_1}):=\{u\in L_\per^p({Q_1})\, |\, \int_{Q_1} u=0\}\\
 W_\per^{1,p}({Q_1})&:=\{u\in L_{\per}^p({Q_1})\, |\, u\in W_\loc^{1,p}(\R^d)\},\quad  W_{\per,0}^{1,p}({Q_1}):=\{u\in W_\per^{1,p}({Q_1})\, |\, \int_{Q_1} u=0\}.
\end{align*}
and likewise $L_\per^p({Q_1},\R^d)$, $L_{\per,0}^p({Q_1},\R^d)$ etc. Throughout the paper, we drop the explicit dependence of the target space whenever it is clear from the context.

\section{Proofs of the results for nonlinear elasticity}\label{sec:mainelastic}

Before we prove Theorem~\ref{T:1cell} and Theorem~\ref{T:Lipschitzeps}, we relate the matching convex lower bound of $W$ to $(E,s)$-regular coefficient fields of class $\mathcal A_\beta$:
\begin{lemma}\label{L:DVabeta}
 Suppose that Assumption~\ref{ass:W:1} is satisfied. For every $R\in\SO d$, the map $\bfa_R:\R^d\times \R^{d\times d}\to\R^{d\times d}$ given by $\bfa_R(x,F):= DV(x,R+F)-DV(x,R)$ defines a $(E,s)$-regular coefficient field of class $\mathcal A_\beta$. 
\end{lemma}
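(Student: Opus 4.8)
My plan is to verify the two halves of the assertion in turn: that $\bfa_R$ inherits from $V$ the structure of an $(E,s)$-regular coefficient field, and that for a.e.\ $x$ the map $F\mapsto\bfa_R(x,F)$ lies in $\mathcal A_\beta$ (after possibly replacing $\beta$ by a smaller constant, still depending only on $\alpha,d,p$). For the first half, Lemma~\ref{C:wv} furnishes an $(E,s)$-regular tessellation $\{D_\ell\}_{\ell\in\Z}$ on which $V(\cdot,G)$ is constant for every $G\in\R^{d\times d}$; hence the function $G\mapsto V(x,G)$ is literally the same for all $x$ in a given cell $D_\ell$, and so is its derivative, so that $\bfa_R(\cdot,F)=DV(\cdot,R+F)-DV(\cdot,R)$ is constant on each $D_\ell$ for every $F\in\R^{d\times d}$. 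Borel measurability of $x\mapsto\bfa_R(x,F)$ follows since $DV(\cdot,G)$ is, for each fixed $G$, a pointwise limit of difference quotients of the Borel maps $V(\cdot,\cdot)$. Thus $\bfa_R$ is an $(E,s)$-regular (and periodic) coefficient field, and it remains to check the pointwise algebraic conditions defining $\mathcal A_\beta$.

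Fix $x$ with $V(x,\cdot)\in C^3(\R^{d\times d})$. By the fundamental theorem of calculus $\bfa_R(x,F)-\bfa_R(x,G)=\int_0^1 D^2V(x,R+G+t(F-G))(F-G)\,dt$, and the three conditions of $\mathcal A_\beta^0$ are then immediate from $V(x,\cdot)\in\mathcal V_\beta$: the normalization $\bfa_R(x,0)=0$ is trivial; pairing the integrand with $F-G$ and using $D^2V(x,\cdot)[H,H]\ge\beta|H|^2$ gives strong monotonicity with constant $\beta$; and the operator-norm bound $\|D^2V(x,\cdot)\|\le\tfrac1\beta$ (equivalent to $D^2V(x,\cdot)[H,H]\le\tfrac1\beta|H|^2$ by symmetry of $D^2V$) yields $|\bfa_R(x,F)-\bfa_R(x,G)|\le\tfrac1\beta|F-G|$, i.e.\ $\beta|\bfa_R(x,F)-\bfa_R(x,G)|\le|F-G|$. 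This part is routine.

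The delicate point is the continuity estimate $\beta|D\bfa_R(x,F)-D\bfa_R(x,G)|\le\omega(|F-G|)$ with $\omega(t)=\min\{t,1\}$. Since $D\bfa_R(x,F)=D^2V(x,R+F)$, this asks $D^2V(x,\cdot)$ to be globally bounded \emph{and} globally Lipschitz, with constants depending only on $\alpha,d,p$: the former produces the truncation at $1$ for $|F-G|\ge1$, the latter the linear growth of $\omega$ near $0$. Global boundedness is part of $V(x,\cdot)\in\mathcal V_\beta$; the global Lipschitz bound is not recorded in Lemma~\ref{C:wv} but is built into the construction behind it (cf.~\cite{NS17,CDMK06,FT02}), where $V$ coincides with $W+\mu\det$ on the $C^3$-neighbourhood $U_\alpha$ of $\SO d$ --- on which $\|W\|_{C^3}<\tfrac1\alpha$ by (W4) --- and with an explicit strongly convex density outside a bounded set, so that $\|D^3V(x,\cdot)\|_{L^\infty(\R^{d\times d})}\le c(\alpha,d,p)$. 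Combining the two bounds and then shrinking $\beta$ to absorb the resulting constants --- which is harmless, since $\mathcal A_{\beta_1}\subseteq\mathcal A_{\beta_2}$ whenever $0<\beta_2\le\beta_1$ and the new $\beta$ still depends only on $\alpha,d,p$ --- yields the claimed modulus. I expect this extraction of a quantitative $C^3$-bound for $V$ to be the only non-formal ingredient, hence the main (if rather minor) obstacle; everything else is a one-line computation. Finally, I note that $R\in\SO d$ is not actually used anywhere: the same argument applies verbatim to any fixed $R\in\R^{d\times d}$.
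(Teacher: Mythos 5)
Your proposal is correct and follows essentially the same route as the paper: reduce the modulus-of-continuity estimate on $D\bfa_R$ to a global $C^3$-bound on $V$ (via a fundamental-theorem-of-calculus representation of $D\bfa_R(F)-D\bfa_R(G)$), then extract that bound from the construction of the matching convex lower bound in \cite{NS17}, where $V$ is quadratic outside a fixed ball and hence $D^3V$ has compact support. The paper simply invokes this last fact directly (``$V=Q$ on $\{|F|>r\}$, so $D^3V=0$ there''), whereas you additionally spell out the $\mathcal A^0_\beta$ verifications and the harmless shrinking of $\beta$; the one imprecision — stating that $V=W+\mu\det$ on all of $U_\alpha$ rather than on the possibly smaller set $\{\dist(F,\SO d)<\delta\}$ from Lemma~\ref{C:wv} — is immaterial to the argument.
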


\begin{proof}[Proof of Lemma~\ref{L:DVabeta}]
 Lemma~\ref{C:wv} implies that $\bfa_R$ is a $(E,s)$-regular coefficient field of class $\mathcal A_\beta^0$ for some $\beta\in(0,1]$. Moreover, using $V(x,\cdot)\in C^3(\R^{d\times d})$, we obtain
 $$
 |D\bfa (F)-D\bfa (G)|\leq \left(\int_0^1|D^3V(F+s(G-F))|\,ds\right)|F-G|.
 $$
 Hence, it suffices to show that $|D^3V(F)|\leq c$ with $c=c(d,\alpha)<\infty$. This follows by a closer inspection of the construction of $V$ given in \cite[Lemma~1]{NS17}: Indeed, there exists a quadratic function $Q$ and $r>0$ such that $V=Q$ on $\{F\in\R^{d\times d}\,:\,|F|>r\}$ (see \cite[Proof of Lemma~1, Step~3]{NS17}) and thus $|D^3 V(F)|=0$ for $|F|>r$ which finishes the proof.  
\end{proof}

\subsection{Single-cell formula, Proof of Theorem~\ref{T:1cell}}\label{sec:1cell}

\begin{proof}[Proof of Theorem~\ref{T:1cell}]

By appealing to Lemma~\ref{C:wv} we may associated with $W$ a matching convex lower bound $V$ with parameters $\delta,\mu,\beta>0$.  For every $F\in\R^{d\times d}$, we denote by $\phi(F)\in H_{\per,0}^1(Q_1,\R^d)$ the corrector for $V$ and $F$, i.e.,~the unique minimizer of
\begin{equation*}
 H_{\per,0}^1({Q_1},\R^d)\ni \phi\mapsto \int_{{Q_1}} V(y,F+\nabla \phi)\,dy.
\end{equation*}
Recall that $\phi(F)$ is characterised as the unique solution to the Euler-Lagrange equation
\begin{equation}\label{eq:correctorconvex}
 \int_{{Q_1}} \langle DV(y,F+\nabla \phi(F)),\nabla \eta\rangle\,dy=0\qquad\mbox{for all $\eta\in H_\per^1({Q_1},\R^d)$.}
\end{equation}

\step 1 Corrector estimates.

We claim that there exists $\rho=\rho(\beta,d,\delta,E,s)>0$ such that 
\begin{equation}\label{est:correctorconvexlb:lip}
 \dist(F,\SO d)< \rho\qquad\mbox{implies}\qquad \|\dist(F+\nabla \phi(F))\|_{L^\infty(Q_1)}< \delta.
\end{equation}
\substep{1.1} We claim that there exists $c=c(\beta)<\infty$ such that  
\begin{equation}\label{est:correctorconvexlb:l2}
 \|\nabla \phi(F)\|_{L^2(Q_1)}\leq c|F-R|\qquad\mbox{for every $R\in\SO d$.}
\end{equation}
Indeed, the minimality of $W(x,\cdot)$ at $\SO d$ implies $DV(x,R)=DW(x,R)+\mu D\det(R)=\mu D\det(R)$ for almost every $x\in\R^d$ and every $R\in\SO d$. Hence, \eqref{eq:correctorconvex}, the monotonicity and Lipschitz continuity of $DV$ imply for every $R\in\SO d$
\begin{align*}
 \beta\|\nabla \phi(F)\|_{L^2(Q_1)}^2\leq& \int_{Q_1}\langle DV(y,F+\nabla  \phi(F))-DV(y,F),\nabla \phi(F)\rangle\,dy\\
 =&\int_{Q_1}\langle DV(y,R)- DV(y,F),\nabla \phi(F)\rangle\,dy\\
 \leq&\frac1\beta|F-R|\int_{Q_1}|\nabla \phi(F)|\,dy.
\end{align*}
The inequality \eqref{est:correctorconvexlb:l2} follows by Youngs inequality.

\substep{1.2} Proof of \eqref{est:correctorconvexlb:lip}.

For given $R\in\SO d$, let $\bfa_R$ be given as in Lemma~\ref{L:DVabeta}. Equation \eqref{eq:correctorconvex} implies 
\begin{equation*}
  \divv \bfa_R (x,F-R+\nabla \phi(F))=0\qquad\mbox{in $\mathscr D'(\R^d)$.}
\end{equation*}
Hence, Theorem~\ref{Th1} and \eqref{est:correctorconvexlb:l2} imply that there exist $\rho=\rho(\beta,d,E,s)>0$ and $c=c(\beta,d,E,s)<\infty$ such that $|F-R|<\rho$ implies
\begin{equation*}
 \|F-R+\nabla \phi(F)\|_{L^\infty(Q_1)}\leq c|F-R|.
\end{equation*}
Choosing $R\in\SO d$ such that $|F-R|=\dist(F,\SO d)$, we obtain \eqref{est:correctorconvexlb:lip}.

\smallskip

\step{2} Single-cell formula; proof of (a).

Let $\rho>0$ be given as in Step~1. We claim for all $F\in U_{\rho}$
\begin{equation}\label{pf:onecell}
  W_\ho(F)=W\super1_\ho(F)=\int_{Q_1}W(y,F+\nabla\phi(F))\,dy=V_\ho(F)-\mu\det(F),
\end{equation}
where $\phi(F)$ denotes the corrector associated with $V$ given above.

\substep{2.1} It holds 
\begin{equation}\label{WhgeqVh}
 W_\ho(F)\geq V_\ho(F)-\mu\det F\quad\mbox{for all $F\in\R^{d\times d}$.}
\end{equation}
A detailed proof of inequality \eqref{WhgeqVh} is given in \cite[proof of Theorem~2, Step~3]{NS17}. The main ingredients are \eqref{WgeqV}, $\int_{Q_1}\det(F+\nabla\phi(y))\,dy=\det(F)$ for every $\phi\in W_\per^{1,p}(Q_1)$ with $p\geq d$ and the validity of the single-cell formula for convex integrands, see~\cite[Lemma~4.1]{Mueller87}.  

\substep{2.2} Proof of \eqref{pf:onecell}.

Fix $F\in U_{\rho}$. In view of Step~1, we have that $\|\dist(F+\nabla \phi,\SO d)\|_{L^\infty({Q_1})}<\delta$. By the matching property \eqref{W=V}, the fact that the determinant is a Null-Lagrangian and the definition of $\phi(F)$, we obtain
\begin{equation*}
 \int_{Q_1}W(y,F+\nabla \phi(F))\,dy=\int_{Q_1}V(y,F+\nabla \phi(F))\,dy-\det(F)=V_\ho(F)-\mu\det (F).
\end{equation*}
Hence, we get
\begin{align*}
 W_\ho(F)\leq W_\ho^{(1)}(F)\leq \int_{Q_1}W(y,F+\nabla \phi(F))\,dy = V_\ho(F)-\mu\det (F)&\stackrel{\eqref{WhgeqVh}}{\leq}W_\ho(F),\notag
\end{align*}
and thus \eqref{pf:onecell}.

\step 3 Uniqueness and regularity of the corrector; proof of (b).
\smallskip

Let $\rho>0$ be as in Step~1 and consider $F\in U_{\rho}$. From \eqref{pf:onecell} we learn that
\begin{equation}\label{Whcor}
  W_\ho(F)=\int_{Q_1}W(y,F+\nabla\tilde\phi)\,dy
\end{equation}
is satisfied for $\tilde\phi=\phi(F)$. If $\tilde\phi\in W^{1,p}_{\per,0}({Q_1})$ denotes another function satisfying \eqref{Whcor}. Then 
\begin{eqnarray*}
  W_\ho(F)&\stackrel{\eqref{Whcor}}{=}&\int_{Q_1}W(y,F+\nabla\tilde\phi)\,dy\stackrel{\eqref{WgeqV}}{\geq}\int_{Q_1} V(y,F+\nabla\tilde\phi)-\mu\det(F)\\
  &\geq& V_\ho(F)-\mu\det F\stackrel{\eqref{pf:onecell}}{=}W_\ho(F).
\end{eqnarray*}
Hence, $\tilde\phi$ is a minimizer of $\psi\mapsto \int_{Q_1}V(y,F+\nabla\psi)$. By strong convexity of $V$, minimizers are unique and we deduce that $\tilde\phi=\phi(F)$. This proves uniqueness of $\phi(F)$. For the asserted regularity of $\phi(F)$ see \eqref{est:correctorconvexlb:lip}. Moreover, we observe that \eqref{est:correctorconvexlb:lip} and the matching property \eqref{W=V} imply $DV(y,F+\nabla \phi(y,F))=DW(y,F+\nabla \phi(y,F))+\mu D\det(F+\nabla \phi(y,F))$ for a.e.\ $y\in Q_1$ and thus $\phi(F)$ solves \eqref{eq:correctorpde}. Finally, uniqueness follows since every solution of \eqref{eq:correctorpde} is also a solution of \eqref{eq:correctorconvex} which is unique up to a additive constant.

\step 4 The proof of (c) and (d)

In \cite{NS17}, we proved (c) and (d) for more regular integrands $W$ but used only to the identity \eqref{pf:onecell} for all $F\in U_\rho$. Hence, the argument works verbatim in the present situation. We thus omit further details here and refer to \cite[proof of Theorem~2, Step~4 and 6]{NS17}.
\end{proof}

\subsection{Uniform Lipschitz estimate, Proof of Theorem~\ref{T:Lipschitzeps}}

\begin{proof}[Proof of Theorem~\ref{T:Lipschitzeps}]

With Theorem~\ref{T:3} at hand, the proof of Theorem~\ref{T:Lipschitzeps} follows in the same way as the proof of Theorem~\ref{T:1cell}. By Lemma~\ref{C:wv}, we may associated with $W$ a matching convex lower bound $V$ with parameters $\delta,\mu,\beta>0$.

\smallskip

By the strong convexity of $V$ and the assumption $(f)_{Q_1}=0$, we find a unique minimizer $v\in H_{\per,0}^1({Q_1})$ such that
\begin{equation}\label{min:ueconvex}
 \int_{Q_1} V(\tfrac{x}\e,F+\nabla v) - f\cdot v\,dx\leq \int_{Q_1} V(\tfrac{x}\e,F+\nabla \varphi) - f\cdot \varphi\,dx\qquad\mbox{for all $\varphi\in H_{\per}^1(Q_1)$.}
\end{equation}
%

\smallskip

\step{1} Lipschitz estimate for $v$. 

We claim that there exist $\overline\rho=\overline \rho(\beta,d,\delta,E,q,s)>0$ and $c=c(\beta,d,E,q,s)\in[1,\infty)$ such that \eqref{def:Lambda:intro} implies
\begin{equation}\label{eq:claimvepslip}
 \|\dist(F+\nabla v,\SO d)\|_{L^\infty(Q_1)}\leq c\Lambda(f,F)< \delta.
\end{equation}
\substep{1.1} We claim that there exists $c_1=c_1(\beta,d)\in[1,\infty)$ such that 
\begin{equation}\label{eq:apriorithlip}
 \|\nabla v\|_{L^2(Q_1)}\leq c_1\Lambda(f,F).
\end{equation}
The Euler-Lagrange equation for $v$ implies
\begin{equation}\label{euler:ueconvex}
 \int_{Q_1}\langle DV(\tfrac{x}\e,F + \nabla v),\nabla \eta\rangle\,dx = \int_{Q_1} f\cdot \eta\,dx\quad\mbox{for all $\eta\in H_\per^1(Q_1)$,}
\end{equation}
where we use $(f)_{Q_1}=0$ and thus $\int_{Q_1} f\eta =\int_{Q_1} f\cdot(\eta-(\eta)_{Q_1})$. Fix $R\in\SO d$ and note that the minimality of $W$ at $\SO d$ yields $DV(x,R)=\mu D\det(R)$ for almost every $x\in\R^d$. Hence, the monotonicity and Lipschitz continuity of $DV$ (using $V\in\mathcal V_\beta$, cf.\ Definition~\ref{def:vbeta}) imply 
\begin{align*}
 \beta\|\nabla v\|_{L^2(Q_1)}^2\leq& \int_{Q_1}\langle DV(\tfrac{x}\e,F+ \nabla v)- DV(\tfrac{x}\e,F),\nabla v\rangle \,dx\\
 =&\int_{Q_1}f\cdot v -\langle DV(\tfrac{x}\e,F)- DV(\tfrac{x}\e,R),\nabla v\rangle \,dx\\
 \leq&\int_{Q_1} f\cdot v\,dx+\frac1\beta|F-R|\int_{Q_1} |\nabla v|\,dx,
\end{align*}
and inequality \eqref{eq:apriorithlip} follows by Young's inequality , the Poincar\'e inequality and the arbitrariness of $R\in \SO d$.

\substep{1.2} Proof of \eqref{eq:claimvepslip}. 

Choose $R\in\SO d$ such that $|F-R|=\dist(F,\SO d)$ and consider $\tilde v_R\in H_{\loc}^1(\R^d)$ given by $\tilde v_R(x):=v(x)+(F-R) x$. Let $\bfa_R$ be given as in Lemma~\ref{L:DVabeta}. Clearly, we have 
\begin{equation*}
 -\divv \bfa_R(\tfrac{x}\e,\nabla \tilde v_R)=\tilde f\quad \mbox{in $\mathscr D'(\R^d)$},
\end{equation*}
where $\tilde f\in L^q_\loc(\R^d)$ denotes the $Q_1$-periodic extension of $f\in L^q(Q_1)$. Moreover, estimate \eqref{eq:apriorithlip} implies $\|\nabla \tilde v\|_{L^2(Q_1)}\leq c_1\Lambda(f,F)+\dist(F,\SO d)\leq (c_1+1) \Lambda(f,F)$. Hence, we can apply Theorem~\ref{T:3} and find $\rho=\rho(\beta,d,E,q,s)>0$ and $c_2=c_2(\beta,d,E,q,s)\in[1,\infty)$ such that $\Lambda(f,F)\leq \rho$ implies  
\begin{equation*}
  \|\nabla \tilde v_R\|_{L^\infty(Q_1)}\leq c_2\Lambda(f,F).
\end{equation*}
Finally, we obtain that \eqref{def:Lambda:intro} with $\bar \rho=\min\{\frac{\delta}{c_2},\rho\}$ yield \eqref{eq:claimvepslip}: 
\begin{equation*}
 \|\dist(F+\nabla v,\SO d)\|_{L^\infty(Q_1)}\leq \|\nabla \tilde v_R\|_{L^\infty(Q_1)}\leq c_2\Lambda(f,F)<\delta.
\end{equation*}

\step{2} Conclusion.

Inequality \eqref{WgeqV} and the fact that the determinant is a Null-Lagrangian yield
\begin{equation}\label{ineq:lipvepslb}
 \int_{Q_1} V(\tfrac{x}\e,F+\nabla v) - f\cdot v\,dx\leq \int_{Q_1} W(\tfrac{x}\e,\nabla \varphi) - f\cdot \varphi\,dx + \mu\det(F),
\end{equation}
for all $\varphi\in W_{\per}^{1,p}(Q_1)$. Estimate \eqref{eq:claimvepslip} and the matching property \eqref{W=V} yield 
\begin{align*}
 \int_{Q_1} W(\tfrac{x}\e,F+\nabla v) - f\cdot v\,dx =\int_{Q_1} V(\tfrac{x}\e,F+\nabla v)-f\cdot v\,dx-\mu\det(F),
\end{align*}
which implies (in combination with \eqref{ineq:lipvepslb}) that $v$ is a minimizer of \eqref{minprob:ueps} and satisfies the claimed Lipschitz estimate. Finally, as in Step~3 of the proof of Theorem~\ref{T:1cell}, we can use the fact that the convex minimization problem \eqref{min:ueconvex} has a unique solution which is characterised by \eqref{euler:ueconvex} to show that $v$ is the unique minimizer of \eqref{minprob:ueps} and the unique solution in $W_{\per,0}^{1,p}(Q_1)$ of \eqref{eq:eulerlagrange}.

\end{proof}

\section{Lipschitz estimates for monotone systems}\label{S:3}

\subsection{Strategy and proof of Theorem~\ref{Th1}}

As mentioned earlier Theorem~\ref{Th1} is obtained by a combination of perturbation methods and some basic estimates for elliptic systems with layered coefficients. We start with presenting the regularity for layered coefficients and state a Campanato-type decay estimate for the derivatives and the flux orthogonal to the layers.
\begin{proposition}[Decay estimate for layered coefficients]\label{L:estmorrey}
Fix $d\geq2$, $\beta\in(0,1]$ and let $\bfa$ be a $e_d$-layered coefficient field of class $\mathcal A_\beta$. There exists $\gamma=\gamma(\beta,d)\in(0,1)$, $\overline \kappa_0=\overline\kappa_0(\beta,d)\in(0,\infty]$ and $c=c(\beta,d)<\infty$ such that the following is true: Suppose that $v\in H^1(B)$, where $B\subset\R^d$ is a ball of radius $R>0$, satisfies 
\begin{equation}\label{eq:nonlinearxdf}
  \divv \bfa(\nabla v)=0\qquad \mbox{in $\mathscr D'(B)$,}
\end{equation}
and the smallness condition
\begin{equation*}
 \|{\nabla'} v - ({\nabla'} v)_{B}\|_{\underline L^2(B)} \leq \begin{cases}
                                                               \infty&\mbox{if $d=2$,}\\
                                                               \overline \kappa_0&\mbox{if $d\geq3$.}
                                                              \end{cases}
\end{equation*}
Then, 
\begin{align}\label{est:decaynablav}
 \sup_{r\in(0,1]}r^{-\gamma}\|\nabla' v - (\nabla' v)_{rB}\|_{\underline L^2(r B)}\leq& c\|{\nabla'} v - ({\nabla'} v)_{B}\|_{\underline L^2(B)},\\
 \sup_{r\in(0,1]}r^{-\gamma}\|J_d(v) - (J_d(v))_{rB}\|_{\underline L^2(r B)}\leq& c\|{\nabla'} v - ({\nabla'} v)_{B}\|_{\underline L^2(B)}\label{est:decayJ},
\end{align}
where $J_d(v)$ is defined by
\begin{equation*}
 J_d(v):=\bfa(\cdot,\nabla v)e_d.
\end{equation*}
\end{proposition}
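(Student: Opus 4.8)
The plan is to prove Proposition~\ref{L:estmorrey} by the linearization scheme sketched in the introduction: at each scale we freeze the nonlinear coefficient $D\bfa(\cdot,\xi)$ at the layer-wise affine map with slope $\xi=(\nabla' v)_{rB}$ in the orthogonal directions, compare $v$ with the solution of the resulting \emph{linear, $e_d$-layered} system, and invoke the Lipschitz/Campanato estimate for linear layered systems proven in Appendix~\ref{sec:proofkinderlehrer}. The key point is that both $\nabla'v$ and the orthogonal flux component $J_d(v)=\bfa(\cdot,\nabla v)e_d$ are governed by the linearized problem: the tangential derivatives because the equation can be differentiated in the $e_1,\dots,e_{d-1}$ directions (the coefficients depend only on $x_d$), and $J_d(v)$ because \eqref{eq:nonlinearxdf} with $f=0$ says exactly $\partial_d J_d(v)=-\sum_{i<d}\partial_i(\bfa(\cdot,\nabla v)e_i)$, so $J_d$ inherits the regularity of the tangential fluxes. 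Thus the natural ``excess'' to iterate is
\[
  \Phi(rB):=\big\|\nabla' v-(\nabla' v)_{rB}\big\|_{\underline L^2(rB)}+\big\|J_d(v)-(J_d(v))_{rB}\big\|_{\underline L^2(rB)}.
\]

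\textbf{Main steps.} First I would set up the comparison: fix a ball $rB\subset B$, let $\mathbb L(x_d):=D\bfa(x_d,\xi_r)$ with $\xi_r:=(\nabla v)_{rB}$ (note $\xi_r$ is a full $d\times d$ matrix reconstructed from $(\nabla'v)_{rB}$ together with the a.e.\ pointwise relation between $J_d$ and $\partial_d v$ coming from strong monotonicity of $\bfa(x_d,\cdot)$), and let $w$ solve $\divv(\mathbb L\nabla w)=0$ in $\tfrac12 rB$ with $w=v$ on $\partial(\tfrac12 rB)$. Since $\mathbb L$ is a strongly elliptic $e_d$-layered linear tensor, the linear layered estimate gives, for $\theta\in(0,\tfrac12]$, a decay $\Phi_w(\theta r B)\le C\theta^{\gamma_0}\Phi_w(\tfrac12 rB)$ for some $\gamma_0=\gamma_0(\beta,d)\in(0,1)$, where $\Phi_w$ is the analogue of $\Phi$ with $v$ replaced by $w$ and $J_d(v)$ by $(\mathbb L\nabla w)e_d$. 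Second, I would estimate the error $v-w$: from the equations, $v-w\in H^1_0(\tfrac12 rB)$ satisfies $\divv(\mathbb L\nabla(v-w))=\divv\big(\mathbb L\nabla v-\bfa(\cdot,\nabla v)\big)$, and a Caccioppoli/energy estimate together with the modulus-of-continuity bound \eqref{ass:Dareg} (here $\omega(t)=\min\{t,1\}$) controls $\|\nabla(v-w)\|_{\underline L^2}$ by $\omega(\|\nabla v-\xi_r\|_{\underline L^\infty})\cdot\|\nabla v-\xi_r\|_{\underline L^2}$ — but since at this stage we only control $L^2$-type excess, the more robust route is to bound the defect $\|\mathbb L\nabla v-\bfa(\cdot,\nabla v)\|_{\underline L^2(\frac12 rB)}$ by $\int_0^1\omega(|\nabla v-\xi_r|)\,|\nabla v-\xi_r|\lesssim \Phi(rB)^{1+\sigma}$ via a higher-integrability (Meyers/Gehring) estimate for $\nabla v$, using that $|\nabla v-\xi_r|\lesssim |\nabla'v-(\nabla'v)_{rB}|+|J_d(v)-(J_d(v))_{rB}|$ by ellipticity, i.e.\ $|\nabla v|\lesssim|\nabla'v|+|J_d(v)|$. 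Combining, one gets the one-step inequality $\Phi(\theta rB)\le C\theta^{\gamma_0}\Phi(rB)+C\theta^{-d/2}\,\Phi(rB)^{1+\sigma}$; choosing $\theta$ small to beat the first term, then requiring $\Phi(B)\le\bar\kappa_0$ small (only needed when $d\ge3$ — for $d=2$ the gradient higher-integrability and Sobolev embedding make the nonlinear term harmless, hence $\bar\kappa_0=\infty$) to absorb the perturbation, a standard iteration over dyadic scales yields $\Phi(rB)\le C r^{\gamma}\Phi(B)$ for some $\gamma\in(0,\gamma_0)$, which is \eqref{est:decaynablav}--\eqref{est:decayJ}.

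\textbf{Expected obstacle.} The main difficulty is the error/defect estimate in the vectorial, merely-monotone setting: unlike the scalar case there is no De Giorgi–Nash–Moser theory to give pointwise gradient control, so one cannot directly invoke $\|\nabla v-\xi_r\|_{L^\infty}$ smallness to feed into $\omega$. The trick is to only ever use $L^2$-excess plus a self-improving higher-integrability exponent $2+\sigma$ for $\nabla v$ (which does survive in the vectorial case), and to exploit that the \emph{tangential} part of the system genuinely linearizes — so the regularity one propagates is that of $(\nabla'v,J_d(v))$, not of the full gradient. A secondary technical point is that $\xi_r$ must be defined so that $\mathbb L=D\bfa(\cdot,\xi_r)$ is the correct linearization of $\bfa(x_d,\cdot)$ around the ``average state'' of $v$ on $rB$; because $\bfa(x_d,\cdot)$ is nonlinear, $\xi_r$ is not simply $(\nabla v)_{rB}$ coordinate-wise but the matrix whose tangential block is $(\nabla'v)_{rB}$ and whose last column is determined by inverting $\xi\mapsto\bfa(x_d,\xi)e_d$ at the value $(J_d(v))_{rB}$ — care is needed that this is well-defined and Lipschitz uniformly in $x_d$, which follows from \eqref{ass:monotonicity}--\eqref{ass:lip}. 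Once these points are handled, the iteration is routine.
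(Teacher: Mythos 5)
Your overall strategy (freeze, compare to a linear $e_d$-layered system, iterate a Campanato excess, use monotonicity/ellipticity in the form $|\nabla v|\lesssim |\nabla'v|+|J_d(v)|$, and Meyers higher integrability to control the nonlinear perturbation) is aligned with the paper's. But there is a genuine gap in the error step that, as written, makes the one-step improvement fail.

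The problem is the defect estimate. You compare $v$ with the solution $w$ of the \emph{homogeneous} frozen system $\divv(\mathbb L\nabla w)=0$ (with $w=v$ on the boundary), and claim
\[
\|\mathbb L\nabla v-\bfa(\cdot,\nabla v)\|_{\underline L^2}\lesssim \omega\big(|\nabla v-\xi_r|\big)\,|\nabla v-\xi_r|.
\]
This is false: writing $\bfa(x_d,\nabla v)=\bfa(x_d,\xi_r)+D\bfa(x_d,\xi_r)(\nabla v-\xi_r)+R$ with $|R|\lesssim\omega(|\nabla v-\xi_r|)|\nabla v-\xi_r|$ shows
\[
\mathbb L\nabla v-\bfa(\nabla v)=\big(\mathbb L\,\xi_r-\bfa(\xi_r)\big)-R ,
\]
and the first term $M(x_d):=D\bfa(x_d,\xi_r)\xi_r-\bfa(x_d,\xi_r)$ does not vanish unless $\bfa(x_d,\cdot)$ is linear. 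It is not small either -- by the fundamental theorem of calculus $|M|\lesssim\omega(|\xi_r|)\,|\xi_r|$, and $|\xi_r|=|(\nabla v)_{rB}|$ is not controlled by the excess, which is the only quantity assumed small. Since $M$ depends on $x_d$ and is piecewise constant, $\divv M=\partial_d(M e_d)$ is a nonzero (singular) distribution supported on the layer interfaces, so it cannot be discarded in the energy estimate for $v-w$. In other words, you have linearized the nonlinear \emph{map} $\bfa$ around $\xi_r$, but you compare against the solution of a system that has no source, so the zeroth-order mismatch $\mathbb L\xi_r-\bfa(\xi_r)$ has nowhere to go.

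The paper avoids this by first differentiating the equation in the tangential directions $e_1,\dots,e_{d-1}$ (this is the point of \eqref{eq:diffxi} in Lemma~\ref{L:energyestimates}): then $\partial_i v$ solves a \emph{genuinely linear} system $\divv(\mathbb L_v\nabla\partial_i v)=0$ with $\mathbb L_v=D\bfa(\cdot,\nabla v)$, so when one freezes $\mathbb L_v$ to the layered tensor $\overline{\mathbb L}_v(x_d)=D\bfa\big(x_d,\overline{(\nabla v)}_Q(x_d)\big)$, the defect is $(\mathbb L_v-\overline{\mathbb L}_v)\nabla\partial_i v$, controlled by $\omega(|\nabla v-\overline{(\nabla v)}_Q|)\,|\nabla\partial_i v|$, with no zeroth-order remainder. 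Correspondingly the paper iterates the second-order excess $\varphi(\tau,Q)=|\tau Q|^{2/d}\|\nabla\nabla'v\|^2_{\underline L^2(\tau Q)}$, and the choice of the \emph{tangential average} $\overline{(\nabla v)}_Q(x_d)$ (a function of $x_d$, never a constant matrix) is what makes the anisotropic Poincar\'e inequality of Lemma~\ref{L:poincaretan} close the loop, since that inequality gives exactly $\|\nabla v-\overline{(\nabla v)}_Q\|\lesssim|Q|^{1/d}\|\nabla\nabla' v\|$ and needs no $\partial_d^2 v$. To repair your first-order scheme you would have to (i) subtract off a suitable primitive of $\xi_r$ -- which, because your carefully chosen $\xi_r$ is necessarily $x_d$-dependent, is no longer an affine function -- and (ii) re-derive the equation for the shifted unknown; at that point one is essentially reproducing the differentiated-equation route. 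The remaining ingredients you list (Meyers exponent, Lemma~\ref{l:edmonoton}, $\partial_d J_d=-\sum_{j<d}\partial_j(\bfa(\nabla v)e_j)$, and the smallness/absorption and iteration for $d\geq3$ versus no smallness for $d=2$) are correct and match the paper.
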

The proof of Proposition~\ref{L:estmorrey} is deferred to Section~\ref{sec:pf:L:estmorrey}. 

\begin{remark}
 In the proof of Proposition~\ref{L:estmorrey}, we also show that for every $\gamma\in(0,1)$ there exist $\kappa_0(\beta,d,\gamma)>0$ and $c=c(\beta,\gamma,d)<\infty$ such that \eqref{eq:nonlinearxdf} and $\| {\nabla'} v - ({\nabla'} v)_{B}\|_{\underline L^2(B)} \leq\kappa_0$ yield \eqref{est:decaynablav} and \eqref{est:decayJ} (see Step~2 and 3 of the proof of Proposition~\ref{L:estmorrey}).
\end{remark}

A consequence of Proposition~\ref{L:estmorrey} is the following Lipschitz estimate:
\begin{corollary}[Lipschitz estimate for layered coefficients]\label{P:nonlinearlayer}
Fix $\beta\in(0,1]$, $d\geq2$ and let $\bfa$ be a layered coefficient field of class $\mathcal A_\beta$. There exists $\overline \kappa_1=\overline\kappa_1(\beta,d)>0$ and $c=c(\beta,d)<\infty$ such that if $v\in H^1(B)$, with a ball $B\subset\R^d$, satisfies \eqref{eq:nonlinearxdf} and 
\begin{equation}\label{kappasmall}
 \|{\nabla'} v - ({\nabla'} v)_{B}\|_{\underline L^2(B)} \leq \begin{cases}\infty&\mbox{if $d=2$,}\\ \overline \kappa_1&\mbox{if $d\geq3$,}\end{cases}  
\end{equation}
then, 
\begin{equation}
 \|\nabla v\|_{L^\infty(\frac12 B)}\leq c \|\nabla v\|_{\underline L^2(B)}\label{P:nonlinearlayer:claim2}.
\end{equation}
\end{corollary}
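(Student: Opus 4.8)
The plan is to deduce the Lipschitz bound from the Campanato decay estimates of Proposition~\ref{L:estmorrey} together with the algebraic coercivity $|\nabla v|\lesssim |\nabla' v| + |J_d(v)|$ coming from the monotonicity of $\bfa$. First I would reduce to the case that $\bfa$ is $e_d$-layered: since $\bfa$ is layered, it is $e$-layered for some unit vector $e$, and after an orthogonal change of variables (which preserves the class $\mathcal A_\beta$, the equation \eqref{eq:nonlinearxdf}, and balls) we may assume $e = e_d$. One then has to check that the smallness hypothesis \eqref{kappasmall} is invariant under this rotation up to a dimensional constant, which it is because $\nabla' v$ in the rotated frame is a fixed linear combination of the components of $\nabla v$ and the rotation is an isometry on $L^2$.

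Next I would apply Proposition~\ref{L:estmorrey} on the ball $B$ (with $R$ its radius), choosing $\overline\kappa_1 := \overline\kappa_0$, to obtain for all concentric sub-balls $rB$, $r\in(0,1]$, the Morrey-type bounds
\begin{equation*}
 r^{-\gamma}\|\nabla' v - (\nabla' v)_{rB}\|_{\underline L^2(rB)} + r^{-\gamma}\|J_d(v) - (J_d(v))_{rB}\|_{\underline L^2(rB)} \leq c\|\nabla' v - (\nabla' v)_B\|_{\underline L^2(B)} \leq c\|\nabla v\|_{\underline L^2(B)}.
\end{equation*}
By Campanato's characterization of Hölder spaces, this gives $\nabla' v \in C^{0,\gamma}(\tfrac12 B)$ and $J_d(v)\in C^{0,\gamma}(\tfrac12 B)$ with the corresponding seminorm bounds controlled by $R^{-\gamma}\|\nabla v\|_{\underline L^2(B)}$; in particular both are bounded on $\tfrac12 B$, and combined with the $\underline L^2$-average via the standard interpolation $\|g\|_{L^\infty(\frac12 B)} \lesssim \|g\|_{\underline L^2(B)} + R^{\gamma}[g]_{C^{0,\gamma}(\frac12 B)}$ one gets $\|\nabla' v\|_{L^\infty(\frac12 B)} + \|J_d(v)\|_{L^\infty(\frac12 B)} \leq c\|\nabla v\|_{\underline L^2(B)}$.

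Finally I would recover the full gradient pointwise. Fix $x\in\tfrac12 B$ a Lebesgue point of $\nabla v$; writing $F := \nabla v(x)$ and using \eqref{ass:monotonicity} with $G$ the matrix whose first $d-1$ columns agree with those of $F$ and whose last column is chosen so that $\langle \bfa(G),\, (F-G)\rangle$ is controlled, one arrives at $\beta|F e_d - G e_d|^2 \leq \langle J_d(v)(x) - \bfa(G)e_d,\, F e_d - G e_d\rangle$, which after using \eqref{ass:a}, \eqref{ass:lip} and the bound on $\nabla' v$ yields $|\nabla v(x)| \lesssim |\nabla' v(x)| + |J_d(v)(x)|$ with a constant depending only on $\beta, d$. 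Taking the supremum over Lebesgue points $x\in\tfrac12 B$ and inserting the $L^\infty$-bounds from the previous step gives \eqref{P:nonlinearlayer:claim2}. The main obstacle here is the careful bookkeeping of the rotation reduction and the invariance of the smallness condition; the Campanato and coercivity steps are essentially routine once Proposition~\ref{L:estmorrey} is in hand.
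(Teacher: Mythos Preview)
Your overall strategy matches the paper's, but there is a genuine gap in the Campanato step. Proposition~\ref{L:estmorrey} applied once on $B$ gives the decay of $\nabla'v-(\nabla'v)_{rB}$ and $J_d(v)-(J_d(v))_{rB}$ only for balls $rB$ \emph{concentric with} $B$. Campanato's characterization of $C^{0,\gamma}(\tfrac12 B)$, however, requires such decay for balls centered at \emph{every} point $z\in\tfrac12 B$. From the single family $\{rB\}$ you can only conclude boundedness at the centre of $B$, not on all of $\tfrac12 B$.

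The fix is to apply Proposition~\ref{L:estmorrey} on $\hat B:=B_{R/2}(z)\subset B$ for each $z\in\tfrac12 B$. For this you must verify the smallness hypothesis on $\hat B$, which forces you to take $\overline\kappa_1:=2^{-d/2}\overline\kappa_0$ (so that $\|\nabla'v-(\nabla'v)_{\hat B}\|_{\underline L^2(\hat B)}\leq 2^{d/2}\|\nabla'v-(\nabla'v)_{B}\|_{\underline L^2(B)}\leq\overline\kappa_0$), rather than $\overline\kappa_1=\overline\kappa_0$ as you wrote. With this adjustment the decay holds at every $z\in\tfrac12 B$, and your Campanato/interpolation route goes through; this is equivalent to the paper's argument, which iterates the decay at each $z$, bounds the running averages by a telescoping sum, and concludes via Lebesgue differentiation and Lemma~\ref{l:edmonoton}. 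As a side remark, the rotation reduction you describe is fine provided you interpret $\nabla'$ in \eqref{kappasmall} as the gradient orthogonal to the layering direction; the paper sidesteps this by tacitly working in the $e_d$-layered case.
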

We provide a proof of Corollary~\ref{P:nonlinearlayer} in Section~\ref{sec:nonlinearlayer}. In the proof of Corollary~\ref{P:nonlinearlayer} we appeal to monotonicity of $\bfa$ in form of:
\begin{lemma}\label{l:edmonoton}
Fix $\beta\in(0,1]$, $d\geq2$ and $\bfa\in\mathcal A_\beta^0$. Then it holds for every $F\in\R^{d\times d}$
\begin{equation}\label{ed:monoton}
 \beta |F e_d|^2\leq \tfrac1\beta \left( |\bfa(F)e_d|^2+\tfrac1\beta |F(\Id-e_d\otimes e_d)|^2\right).
\end{equation}
\end{lemma}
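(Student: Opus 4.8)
The plan is to use strong monotonicity of $\bfa$ along the segment joining $F$ to its ``horizontal part''. Set $F':=F(\Id-e_d\otimes e_d)$, so that $F-F'=Fe_d\otimes e_d$ and hence $|F-F'|=|Fe_d|$. Applying the monotonicity inequality \eqref{ass:monotonicity} to the pair $(F,F')$ yields
\begin{equation*}
\beta|Fe_d|^2=\beta|F-F'|^2\le \langle\bfa(F)-\bfa(F'),F-F'\rangle=\langle\bfa(F)-\bfa(F'),Fe_d\otimes e_d\rangle .
\end{equation*}
The key observation is that the right-hand side, which a priori involves the whole matrix $\bfa(F)-\bfa(F')$, in fact only sees its last column: using the elementary identity $\langle A,v\otimes e_d\rangle=v\cdot Ae_d$ (valid for all $A\in\R^{d\times d}$, $v\in\R^d$) together with the Cauchy--Schwarz inequality,
\begin{equation*}
\langle\bfa(F)-\bfa(F'),Fe_d\otimes e_d\rangle=(Fe_d)\cdot\big(\bfa(F)e_d-\bfa(F')e_d\big)\le |Fe_d|\,\big(|\bfa(F)e_d|+|\bfa(F')e_d|\big).
\end{equation*}

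Next I would control $|\bfa(F')e_d|$ by the Lipschitz bound \eqref{ass:lip} together with $\bfa(0)=0$ from \eqref{ass:a}, namely $|\bfa(F')e_d|\le |\bfa(F')|=|\bfa(F')-\bfa(0)|\le\tfrac1\beta|F'|=\tfrac1\beta|F(\Id-e_d\otimes e_d)|$. Combining the two displays gives
\begin{equation*}
\beta|Fe_d|^2\le |Fe_d|\Big(|\bfa(F)e_d|+\tfrac1\beta|F(\Id-e_d\otimes e_d)|\Big).
\end{equation*}
If $Fe_d=0$ the asserted inequality \eqref{ed:monoton} is trivial, its right-hand side being a sum of squares; otherwise I divide by $|Fe_d|$ to obtain the linear inequality $\beta|Fe_d|\le|\bfa(F)e_d|+\tfrac1\beta|F(\Id-e_d\otimes e_d)|$, and a squaring/Young's-inequality step then produces \eqref{ed:monoton}.

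There is no real obstacle here: the lemma is short, and its entire content is the structural remark above --- that after the splitting $F=F'+Fe_d\otimes e_d$, testing the monotonicity inequality against $Fe_d\otimes e_d$ collapses everything to the $e_d$-columns of $\bfa(F)$ and $\bfa(F')$, which is exactly the form of ellipticity ($|\nabla u|\lesssim|\nabla' u|+|J_d|$) needed to close the Lipschitz estimate in Corollary~\ref{P:nonlinearlayer}. The only mildly delicate point is the bookkeeping in the final elementary estimate, i.e.\ choosing the weights in Young's inequality so that the $|Fe_d|^2$-contributions generated on the right can be absorbed into the left-hand side; in any case the precise values of the constants are immaterial, since only the qualitative form of \eqref{ed:monoton} is used downstream.
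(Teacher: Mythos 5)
Your proof is essentially identical to the paper's: both apply monotonicity \eqref{ass:monotonicity} to the pair $(F,\,F(\Id-e_d\otimes e_d))$, observe that pairing against $Fe_d\otimes e_d$ collapses everything to the $e_d$-columns, and then close with Young's/Cauchy--Schwarz together with the Lipschitz bound $|\bfa(F')|\le \beta^{-1}|F'|$ from \eqref{ass:a}--\eqref{ass:lip}. The only wrinkle (which you flag yourself, and which applies equally to the paper's one-line sketch) is that the final square-and-split step reproduces \eqref{ed:monoton} only up to an absolute multiplicative factor; since only the qualitative form $|Fe_d|\lesssim|\bfa(F)e_d|+|F(\Id-e_d\otimes e_d)|$ is used in Corollary~\ref{P:nonlinearlayer} and Proposition~\ref{P:niren}, this is immaterial.
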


\begin{proof}
The monotonicity \eqref{ass:monotonicity} implies
\begin{align*}
 \beta |F e_d|^2\leq& \langle \bfa(F)-\bfa(F(\Id-e_d\otimes e_d)),Fe_d\otimes e_d\rangle= (\bfa(F)-\bfa(F(\Id-e_d\otimes e_d)))e_d \cdot Fe_d.
\end{align*}
The inequality~\eqref{ed:monoton} follows after applying Youngs inequality and using $|\bfa(G)|\leq \frac1\beta|G|$, cf.~\eqref{ass:a} and \eqref{ass:lip}.
\end{proof}

Next, we treat the case of $(E,s)$-regular coefficients for which we establish regularity by lifting the Lipschitz-estimate for layered coefficients with help of perturbation arguments. To this end, we introduce a quantity which measures the difference between two operators in a suitable way:
\begin{definition}
Let $d\geq2$. For coefficients $\bfa,\bar \bfa\in C(\R^{d\times d},\R^{d\times d})$ we define 
\begin{align}\label{def:a1-a2}
  \dd(\bfa,\bar \bfa):=\sup_{F\in \R^{d\times d}\setminus \{0\}}  \frac{|\bfa(F)-\bar \bfa(F)|}{|F|}.
\end{align}
For heterogeneous coefficients $\bfa,\bar \bfa:\R^d\times\R^{d\times d}\to\R^{d\times d}$ we use the shorthand notation $\dd(\bfa,\bar \bfa)(x):=  \dd(\bfa(x,\cdot),\bar \bfa(x,\cdot))$ for $x\in\R^d$.
\end{definition}

\begin{remark}\label{rem:a1-a2}
 Note that $\dd$ defines a metric on $\mathcal A_\beta^0$ and in particular we have $\dd(\bfa,\bar \bfa)\leq \frac2\beta$ for all $\bfa,\bar \bfa\in\mathcal A_\beta^0$.
\end{remark}

Now, we state the crucial perturbation result:
\begin{proposition}\label{P:niren}
Fix $d\geq 2$, $\beta,s\in(0,1]$, $E<\infty$ and a ball $B\subset\R^d$. Let $\bfa, \bar \bfa$ be coefficient fields of class $\mathcal A_\beta$ and suppose that $\bar\bfa$ is layered. Suppose that 
\begin{equation}\label{ass:a-abd}
 \sup_{r\in (0,1]} r^{-s} \| \dd (\bfa,\overline \bfa)\|_{\underline L^2(rB)}\leq E.
\end{equation}
There exists $\overline \kappa_2=\overline \kappa_2(\beta,d,E,s)>0$ and $c=c(\beta,d,E,s)\in[1,\infty)$ such that if $u\in H^1(B)$ satisfy
\begin{equation*}
 \max\left\{\|\nabla u\|_{\underline L^2(B)},\sup_{r\in(0,1]}r^{-s}\|\divv \bfa(\nabla u)\|_{\underline H^{-1}(r B)}\right\}\leq\begin{cases}
                                      \infty&\mbox{if $d=2$,}\\
                                      \overline \kappa_2&\mbox{if $d\geq3$,}
                                     \end{cases}
\end{equation*}
then 
\begin{equation*}
  \sup_{r\in(0,1]}\|\nabla u\|_{\underline L^2(r B)}\leq c(\|\nabla u\|_{\underline L^2(B)}+ \sup_{r\in(0,1]}r^{-s}\|\divv \bfa(\nabla u)\|_{\underline H^{-1}(r B)}).
\end{equation*}
\end{proposition}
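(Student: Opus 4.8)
The plan is to run a Campanato-type iteration at the level of the auxiliary quantity $A(u):=(\nabla' u,\,\bfa(\cdot,\nabla u)e_d)$, comparing $u$ on each ball to a solution of the layered equation $\divv\bar\bfa(\cdot,\nabla\bar u)=0$ to which Proposition~\ref{L:estmorrey} applies. First I would set up the excess/decay functional: for a ball $B_r\subset B$, let $\Phi(r):=\inf_{c\in\R^{(d-1)\times d}\times\R^d}\|A(u)-c\|_{\underline L^2(B_r)}$ (equivalently, work with the additive combination $\|\nabla' u-(\nabla' u)_{B_r}\|_{\underline L^2(B_r)}+\|\bfa(\cdot,\nabla u)e_d-(\bfa(\cdot,\nabla u)e_d)_{B_r}\|_{\underline L^2(B_r)}$). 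The key algebraic input is Lemma~\ref{l:edmonoton}, which — applied to $F=\nabla u$ — gives the ellipticity bound $|\nabla u|\lesssim |\nabla' u|+|\bfa(\cdot,\nabla u)e_d|$, so that control of $A(u)$ in $L^2$ (Campanato-norm) upgrades to control of $\nabla u$; this is what will eventually produce $\sup_r\|\nabla u\|_{\underline L^2(rB)}\lesssim \ldots$ at the end.

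The core comparison step: on a fixed ball $B_r$ (with $r\leq 1$), let $\bar u\in u+H_0^1(B_r)$ solve $\divv\bar\bfa(\cdot,\nabla\bar u)=0$ in $B_r$. Using monotonicity \eqref{ass:monotonicity} and Lipschitz continuity \eqref{ass:lip} of $\bfa$, test the difference of the two equations with $u-\bar u$: the error is driven by $\|\dd(\bfa,\bar\bfa)\,|\nabla u|\|_{L^2(B_r)}$ plus the right-hand-side term $\|\divv\bfa(\nabla u)\|_{\underline H^{-1}(B_r)}$. By hypothesis \eqref{ass:a-abd} the first contributes $\lesssim E\,r^s\,\|\nabla u\|_{\underline L^2(B_r)}$ (after a Hölder/Meyers-type estimate to absorb the possibly worse-than-$L^2$ factor; here one can instead keep $\dd\leq 2/\beta$ from Remark~\ref{rem:a1-a2} and use \eqref{ass:a-abd} only as an $L^2$-in-$r^s$ bound, integrating against $|\nabla u|^2$ via Hölder with a Meyers exponent), and the second contributes $\lesssim r^s\kappa$. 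Then Proposition~\ref{L:estmorrey} applied to $\bar u$ gives, for any $\theta\in(0,1)$, a decay $\Phi_{\bar u}(\theta r)\lesssim \theta^\gamma\Phi_{\bar u}(r)$; combining with the comparison estimate $\|A(u)-A(\bar u)\|_{\underline L^2(B_r)}\lesssim r^s(\|\nabla u\|_{\underline L^2(B_r)}+\kappa)$ yields the one-step inequality
\begin{equation*}
\Phi_u(\theta r)\leq C\theta^\gamma\Phi_u(r)+C r^s\big(\|\nabla u\|_{\underline L^2(B_r)}+\kappa'(r)\big),
\end{equation*}
where $\kappa'(r)$ bundles the right-hand-side data. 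Fixing $\theta$ small so that $C\theta^\gamma\leq\theta^{s'}$ for some $s'\in(0,s)$, a standard iteration lemma (geometric summation over dyadic radii) closes the estimate: one obtains $\sup_{r}\|\nabla u\|_{\underline L^2(rB)}\lesssim \|\nabla u\|_{\underline L^2(B)}+\sup_r r^{-s}\|\divv\bfa(\nabla u)\|_{\underline H^{-1}(rB)}$. The smallness threshold $\overline\kappa_2$ for $d\geq 3$ enters precisely to guarantee the hypothesis of Proposition~\ref{L:estmorrey} (smallness of $\|\nabla'\bar u-(\nabla'\bar u)_{B_r}\|_{\underline L^2}$) stays satisfied along the iteration — one tracks that $\|\nabla\bar u\|_{\underline L^2(B_r)}$ remains below the layered threshold $\overline\kappa_0$ because it is controlled by $\|\nabla u\|_{\underline L^2(B)}+\kappa$ plus the (small, by the iteration) excess.

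The main obstacle I expect is the bookkeeping in the iteration: one must simultaneously (i) propagate the smallness hypothesis needed to invoke Proposition~\ref{L:estmorrey} on every scale (a nonlinear feedback, since the comparison error itself depends on $\|\nabla u\|_{\underline L^2(B_r)}$, which is part of what is being estimated), and (ii) handle the fact that $\dd(\bfa,\bar\bfa)$ is only controlled in $L^2$ with an $r^s$ rate, not in $L^\infty$, so the term $\int_{B_r}\dd^2|\nabla u|^2$ needs a self-improving $L^{2+\delta}$ (Meyers/Gehring) estimate for $\nabla u$ to be absorbed — one should record the Meyers exponent as depending only on $\beta,d$ and verify it does not interact badly with $s$. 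Once these two points are arranged, the remaining steps are routine Campanato iteration; the role of $B$ being a ball of radius $1$ is merely to normalize, and rescaling recovers the general statement (as used later for Theorem~\ref{T:3}).
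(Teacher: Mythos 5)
Your proposal matches the paper's proof in all essentials: the same auxiliary quantity $A=(\nabla' u,\bfa(\cdot,\nabla u)e_d)$ linked to $\nabla u$ via Lemma~\ref{l:edmonoton}, the same comparison to a layered solution whose decay comes from Proposition~\ref{L:estmorrey} / Corollary~\ref{P:nonlinearlayer}, the same Meyers-exponent device to absorb the $L^2$-only control on $\dd(\bfa,\bar\bfa)$, and the same nonlinear feedback in the iteration where smallness must be propagated because the comparison error depends on $\|\nabla u\|_{\underline L^2(B_r)}$. The paper packages the comparison step as Lemma~\ref{L:perturbation} and carries out the iteration bookkeeping explicitly in Steps~2--4, but the strategy is the one you describe.
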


We provide a proof of Proposition~\ref{P:niren} in Section~\ref{sec:lipniren}. As the reader might already anticipate, Theorem~\ref{Th1} is now a rather direct consequence of Proposition~\ref{P:niren}:

\begin{proof}[Proof of Theorem~\ref{Th1}]

We first argue that in view of Proposition~\ref{P:niren}, it suffices to show that for every $z\in \frac12 B$ there exists a layered coefficient field $\overline \bfa$ of class $\mathcal A_\beta$ and $c=c(d)\in[1,\infty)$ such that
\begin{align}\label{eq:pfth11}
 &\sup_{r\in(0,1]}r^{-s}\|\dd (\bfa,\overline \bfa)\|_{\underline L^2(r B_\frac12 (z))}\leq c\frac{E}{\beta},\\
&\sup_{r\in(0,1]}r^{1-\frac{d}{q}}\|\divv \bfa (\nabla u)\|_{\underline H^{-1}(rB_\frac12(z))}\leq c\|f\|_{L^q(B)}.\label{eq:pfth12}
\end{align}
Indeed, with \eqref{eq:pfth11} and \eqref{eq:pfth12} at hand, we may choose $\overline \kappa=\overline \kappa(\beta,d,E,s,q)>0$ sufficiently small such that \eqref{ass:smallnesTh1} and Proposition~\ref{P:niren} imply that there exists $c=c(\beta,d,E,s,q)\in[1,\infty)$ such that
\begin{equation*}
 \sup_{r\in(0,1]}\|\nabla u\|_{\underline L^2(r B_\frac12(z))}\leq c(\|\nabla u\|_{L^2(B)}+ \|f\|_{L^q(B)}),
\end{equation*}
and the claim follows by the Lebesgue differentiation theorem and the arbitrariness of $z\in \frac12 B$.

\step1 Proof of \eqref{eq:pfth11}.

Fix $z\in \frac12 B$. Recall that by the definition of a $(E,s)$-regular coefficient field of class $\mathcal A_\beta$, there exists a tessellation $\mathcal D=\{D_\ell\}$ of $\R^d$ such that 
\begin{equation*}
 \bfa(x,F) =\sum_{\ell\in\Z^d}\bfa_\ell(F){\bf 1}_{D_\ell}(x)\qquad\mbox{for almost every $x\in\R^d$ and every $F\in\R^{d\times d}$,}
\end{equation*}
where $\bfa_\ell \in\mathcal A_\beta$ for every $\ell\in\Z$. Moreover, we find a layered tessellation $\mathcal D'=\{D_\ell'\}$ such that
\begin{equation}\label{est:Estes}
 \sup_{r>0}r^{-s}\left(|B_r|^{-1}\sum_{\ell\in\Z}|(D_\ell \Delta D'_\ell)\cap B_r(z)|\right)^\frac12\leq E.
\end{equation}
We denote by $\overline \bfa$ the layered coefficient of class $\mathcal A_\beta$ given by
\begin{equation*}
 \overline \bfa(x,F)=\sum_{\ell\in\Z^d}\bfa_\ell(F){\bf 1}_{D'_\ell}(x)\qquad\mbox{for almost every $x\in\R^d$ and every $F\in\R^{d\times d}$,}
\end{equation*}
and observe by \eqref{ass:lip} and \eqref{est:Estes} that 
\begin{align*}
 \|\dd (\bfa,\overline \bfa)\|_{\underline L^2(r B_\frac12 (z))}\leq \frac{2}{\beta} \left(|B_\frac{r}{2}|^{-1}\sum_{\ell\in\Z}|(D_\ell \Delta {D_z}_\ell)\cap (B_{\frac{r}2}(z))|\right)^\frac12\leq \frac{2}{\beta}\left(\frac{r}{2}\right)^{s}E,
\end{align*}
and thus \eqref{eq:pfth11} (with $c=2$).

\step{2} Proof of \eqref{eq:pfth12}.

Let $v\in H_0^1(B_\frac12(z))$ be the unique weak solution to $-\Delta v=f$ in $\mathscr D'(B_\frac12(z))$. By maximal regularity, we find $c=c(d,q)\in[1,\infty)$ such that
\begin{equation*}
 \|\nabla^2 v\|_{L^q(B_\frac12(z))}\leq c\|f\|_{L^q(B_\frac12(z))}\leq c\|f\|_{L^q(B)}.
\end{equation*}
Hence, using $\divv \bfa (\nabla u)=\divv \nabla v$ and the Poincar\'e inequality, we obtain for every $r\in(0,1)$
\begin{align*}
 \|\divv \bfa (\nabla u)\|_{\underline H^{-1}(rB_\frac12(z))}=&\|\nabla v - (\nabla v)_{rB_\frac12(z))}\|_{\underline L^2(rB_\frac12(z))} \leq c(d)r\|\nabla^2 v\|_{\underline L^2(rB_\frac12(z))}\\
 \leq& c(d)r^{1-\frac{d}{q}}\|\nabla^2 v\|_{L^q(B_\frac12(z))}\leq c(d,q)r^{1-\frac{d}{q}}\|f\|_{L^q(B)},
\end{align*}
which proves \eqref{eq:pfth12}.

\end{proof}

\subsection{Estimates for layered coefficients, Proofs of Proposition~\ref{L:estmorrey}, Corollary~\ref{P:nonlinearlayer}} \label{sec:estnonlinearlayer}

Using the difference quotient method and a reverse H\"older inequality, we obtain the following a priori higher differentiability and integrability result for layered coefficients:
\begin{lemma}[A priori estimate for layered coefficients]\label{L:energyestimates}
Fix $d\geq2$, $\beta\in(0,1]$ and suppose that  $\bfa$ is a $e_d$-layered coefficient field of class $\mathcal A_\beta^0$. There exist $c=c(\beta,d)\in[1,\infty)$ and $\mu=\mu(\beta,d)>0$ such that if $v\in H^1(B)$, where $B$ denotes a ball of radius $R>0$, satisfies
\begin{equation*}
 \divv \bfa(\nabla v) = 0 \qquad\mbox{in $\mathscr D'(B)$},
\end{equation*}
then
\begin{equation}\label{est:meyer:vi}
 \|\nabla \nabla' v\|_{\underline L^{2+\mu}(\frac12 B)}\leq cR^{-1}\|\nabla' v - (\nabla' v)_{B}\|_{\underline L^2(B)}.
\end{equation}
Furthermore, if $\bfa$ is additionally of class $\mathcal A_\beta$, it holds for $i=1,\dots,d-1$ that
\begin{equation}\label{eq:diffxi}
 \divv \mathbb L_v \nabla \partial_i v=0\qquad\mbox{in $\mathscr D'(B)$},
\end{equation}
where
\begin{equation}\label{def:Lv}
  \mathbb L_v:= D\bfa(\cdot,\nabla v).
\end{equation}

\end{lemma}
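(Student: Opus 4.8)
The statement of Lemma~\ref{L:energyestimates} has three parts: (i) a reverse-H\"older / higher-integrability estimate \eqref{est:meyer:vi} for the tangential gradient $\nabla\nabla' v$; (ii) the differentiated equation \eqref{eq:diffxi} for $\partial_i v$, $i=1,\dots,d-1$; and (iii) the identification \eqref{def:Lv} of the linearized coefficient. The key structural fact is that the coefficient field $\bfa(x,F)=\bfa(x_d,F)$ depends on $x$ only through $x_d$, so it is constant along the $d-1$ directions $e_1,\dots,e_{d-1}$. This is precisely what makes differentiation in those directions legitimate.

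\textbf{Step 1: the differentiated equation.} First I would establish \eqref{eq:diffxi}–\eqref{def:Lv} by the difference-quotient method in a tangential direction $e_i$, $i\in\{1,\dots,d-1\}$. For $h\neq 0$ small, set $\tau_{i,h}w(x):=\tfrac1h(w(x+he_i)-w(x))$. Since $\bfa$ does not depend on $x_i$, translating the equation $\divv\bfa(\nabla v)=0$ by $he_i$ and subtracting gives $\divv\bigl(\tau_{i,h}[\bfa(\cdot,\nabla v)]\bigr)=0$ in $\mathscr D'$ on a slightly smaller ball. Writing $\tau_{i,h}[\bfa(x_d,\nabla v(x))]=\bigl(\int_0^1 D\bfa(x_d,\nabla v(x)+s\,h\,\nabla\tau_{i,h}v(x))\,ds\bigr)\nabla\tau_{i,h}v(x)=:\mathbb L_{v,h}\nabla\tau_{i,h}v$, one sees $\tau_{i,h}v$ solves a linear system with uniformly elliptic, bounded measurable coefficients $\mathbb L_{v,h}$ (ellipticity and the bound $|\mathbb L_{v,h}|\le\tfrac1\beta$ come from \eqref{ass:monotonicity}, \eqref{ass:lip} applied to $\bfa\in\mathcal A_\beta$; the integral form of the difference and the $C^1$-regularity of $\bfa$ in $F$, which follows from $\mathcal A_\beta\subset\mathcal A_{\beta,\omega}$ with $\omega(t)=\min\{t,1\}$, justify the representation). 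Caccioppoli's inequality applied to this linear system gives a uniform-in-$h$ bound $\|\nabla\tau_{i,h}v\|_{L^2(B')}\lesssim R^{-1}\|\tau_{i,h}v-(\tau_{i,h}v)_{B''}\|_{L^2(B'')}\lesssim R^{-1}\|\partial_i v-(\partial_i v)_B\|_{L^2(B)}$ on nested balls; hence $\partial_i v\in H^1_{\loc}$, $\tau_{i,h}v\to\partial_i v$ and $\nabla\tau_{i,h}v\rightharpoonup\nabla\partial_i v$ weakly in $L^2$. Moreover $\mathbb L_{v,h}\to D\bfa(\cdot,\nabla v)=\mathbb L_v$ a.e. (using $\mathbb L_v\in L^\infty$, dominated convergence, and continuity of $D\bfa$ in $F$), so passing to the limit in the weak form of $\divv(\mathbb L_{v,h}\nabla\tau_{i,h}v)=0$ yields \eqref{eq:diffxi}. (A standard approximation is needed because the test functions must have compact support in the ball, but this is routine.)

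\textbf{Step 2: higher integrability.} For \eqref{est:meyer:vi} I would run a Caccioppoli/Gehring argument. From Step~1, each $w_i:=\partial_i v$ solves the linear system \eqref{eq:diffxi} with $\mathbb L_v$ uniformly elliptic and bounded; Caccioppoli gives, for every concentric pair of balls $B_\rho(x_0)\subset B_{2\rho}(x_0)\subset B$, the reverse-type estimate
\begin{equation*}
 \Bigl(\fint_{B_\rho(x_0)}|\nabla w_i|^2\Bigr)^{1/2}\le \frac{c}{\rho}\Bigl(\fint_{B_{2\rho}(x_0)}|w_i-(w_i)_{B_{2\rho}(x_0)}|^2\Bigr)^{1/2}\le c\Bigl(\fint_{B_{2\rho}(x_0)}|\nabla w_i|^{2_*}\Bigr)^{1/2_*},
\end{equation*}
where $2_*=\tfrac{2d}{d+2}$ and the last inequality is Sobolev–Poincar\'e. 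This is exactly the hypothesis of Gehring's lemma (in the form found in \cite{Giaquinta83} or \cite{Giu03}): it yields an exponent $\mu=\mu(\beta,d)>0$ and the local estimate $\|\nabla w_i\|_{\underline L^{2+\mu}(\frac12 B)}\le c\|\nabla w_i\|_{\underline L^2(B)}\le cR^{-1}\|\nabla' v-(\nabla' v)_B\|_{\underline L^2(B)}$, where the second bound is the Caccioppoli estimate from Step~1 on the full ball. Summing over $i=1,\dots,d-1$ gives \eqref{est:meyer:vi}. I only need \eqref{est:meyer:vi} under the weaker hypothesis $\bfa\in\mathcal A_\beta^0$: here $D\bfa$ need not exist, but one works directly with the linear system for the difference quotient $\tau_{i,h}v$ (whose coefficients $\mathbb L_{v,h}$, defined via $\tau_{i,h}[\bfa(x_d,\nabla v)]=\mathbb L_{v,h}\nabla\tau_{i,h}v$ as the symmetric-secant ratio, are still uniformly elliptic and bounded by \eqref{ass:monotonicity}/\eqref{ass:lip}), applies Caccioppoli + Gehring uniformly in $h$, and passes to the limit $h\to0$ using weak lower semicontinuity of the $L^{2+\mu}$ norm.

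\textbf{Main obstacle.} The only genuinely delicate point is making the difference-quotient passage to the limit fully rigorous when $\bfa$ is merely in $\mathcal A_\beta^0$ (no differentiability assumed): one must verify that the secant coefficients $\mathbb L_{v,h}$ remain uniformly elliptic and bounded — this uses \eqref{ass:monotonicity} and \eqref{ass:lip} in the quantitative form $\langle \tau_{i,h}[\bfa(x_d,\nabla v)],\nabla\tau_{i,h}v\rangle\ge\beta|\nabla\tau_{i,h}v|^2$ and $|\tau_{i,h}[\bfa(x_d,\nabla v)]|\le\tfrac1\beta|\nabla\tau_{i,h}v|$, which hold because $\bfa(x_d,\cdot)\in\mathcal A_\beta^0$ and the translation leaves $x_d$ fixed — and that the $h$-uniform higher-integrability bound survives the weak limit; the rest (Caccioppoli, Gehring, Sobolev–Poincar\'e, and the standard justification of \eqref{eq:diffxi} for $\bfa\in\mathcal A_\beta$) is entirely classical. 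I expect no serious difficulty, and the proof is short.
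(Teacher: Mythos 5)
Your proposal is correct and takes essentially the same route as the paper: difference quotients in the $d-1$ tangential directions (exploiting that $\bfa$ is independent of $x_1,\dots,x_{d-1}$), Caccioppoli for the difference quotients, passage to the limit to get \eqref{eq:diffxi}, and then Poincar\'e--Sobolev plus Gehring's reverse-H\"older lemma for \eqref{est:meyer:vi}. The paper's proof is a three-line sketch citing \cite[Theorem 6.38]{GM12}; you have simply filled in the details, including the correct observation that for \eqref{est:meyer:vi} one only needs $\bfa\in\mathcal A_\beta^0$ and can run Caccioppoli/Gehring uniformly in the difference-quotient parameter $h$ without ever forming $D\bfa$ (for the Caccioppoli step it is cleaner to avoid the ``secant matrix'' $\mathbb L_{v,h}$ altogether and use the two pointwise inequalities $\langle\tau_{i,h}[\bfa(\cdot,\nabla v)],\nabla\tau_{i,h}v\rangle\ge\beta|\nabla\tau_{i,h}v|^2$ and $|\tau_{i,h}[\bfa(\cdot,\nabla v)]|\le\beta^{-1}|\nabla\tau_{i,h}v|$ directly, since $\tau_{i,h}[\bfa]$ need not be a linear function of $\nabla\tau_{i,h}v$).
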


\begin{proof}[Proof of Lemma~\ref{L:energyestimates}]
Fix $i=1,\dots,d-1$. Since $\bfa(x_d,F)$ is independent of the $x_i$-variable, we can use the difference quotient method to obtain that $\partial_i v\in W_{\operatorname{loc}}^{1,2}(B)$ and for every ball $B_r(x)\subset B$ it holds
\begin{equation*}
 \|\nabla \partial_i v\|_{\underline L^2(B_\frac{r}{2}(x_0))}\leq cr^{-1}\|\partial_i v - (\partial_i v)_{B_r(x_0)}\|_{\underline L^2(B_r(x_0))}.
\end{equation*}
The higher integrability of $\nabla \partial_i v$ is now a consequence of the Poincar\'e-Sobolev inequality and the reverse H\"older inequality, see e.g.,~\cite[Theorem 6.38]{GM12}.
   
\end{proof}

Next, we recall a local Lipschitz estimate, due to Chipot et.~al.~\cite{CKV86}, for solutions of linear elliptic systems with 'layered' coefficients.
\begin{definition}
 For given $\beta\in(0,1]$, we denote by $\mathcal L_\beta$ the class of coefficients $\mathbb L\in \R^{d^4}$ satisfying
 \begin{equation*}
 \beta |G|^2\leq \langle \mathbb LG,G\rangle,\qquad |\mathbb L G|\leq \frac1\beta |G|\qquad\mbox{for every $G\in\R^{d\times d}$.}
\end{equation*}
\end{definition}

\begin{lemma}[\cite{CKV86} Theorem~2]\label{L:estlin1a}
Fix $d\geq2$ and $\beta\in(0,1]$. Let $\mathbb L\in L^\infty(\R^d,\R^{d^4})$ be such that $\mathbb L(x)=\mathbb L(x_d)\in\mathcal L_\beta$ for a.e. $x=(x',x_d)\in\R^d$. Let $u\in H^1(B)$  satisfy
\begin{equation}\label{eq:linpde1d}
 \divv \left(\mathbb L\nabla u\right)=0\quad\mbox{in $\mathscr D'(B)$},
\end{equation}
for some ball $B\subset\R^d$. Then, there exists $c=c(\beta,d)\in[1,\infty)$ such that
\begin{equation*}
 \|\nabla u\|_{L^\infty(\frac12 B)}\leq c\|\nabla u\|_{\underline L^2(B)}.
\end{equation*}
\end{lemma}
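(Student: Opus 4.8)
The statement to prove is Lemma~\ref{L:estlin1a}, the Lipschitz estimate for linear elliptic systems with coefficients depending only on $x_d$, attributed to Chipot--Kinderlehrer--Vergara-Caffarelli~\cite{CKV86}. Since this is cited as a known result (with the paper promising a self-contained proof in Appendix~\ref{sec:proofkinderlehrer}), the plan is to reproduce the classical argument outlined in the introduction for the ``linear \& layered coefficients'' bullet point.

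\textbf{The plan.} Let me set up the proof in the order I would carry it out. By scaling and translation it suffices to treat $B=B_1(0)$. First I would differentiate \eqref{eq:linpde1d} in the tangential directions $e_1,\dots,e_{d-1}$: since $\mathbb{L}$ depends only on $x_d$, the coefficients are invariant under translation in these directions, so the difference-quotient method shows $\partial_k u\in H^1_{\loc}(B)$ for $k=1,\dots,d-1$, with $\divv(\mathbb{L}\nabla \partial_k u)=0$ and the Caccioppoli estimate $\|\nabla\partial_k u\|_{\underline L^2(\frac{3}{4}B)}\lesssim \|\nabla u\|_{\underline L^2(B)}$. Iterating, all purely-tangential derivatives $\partial^\alpha u$ with $\alpha_d=0$ lie in $L^2_{\loc}$ with analogous bounds. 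Second, I would upgrade this to control of $\nabla' u=(\partial_1 u,\dots,\partial_{d-1}u)$ in $L^\infty$: applying a Sobolev-type embedding in the $(d-1)$ tangential variables (for fixed $x_d$) to sufficiently many tangential derivatives, combined with the above $L^2$ bounds, one obtains $\nabla' u\in L^\infty_{\loc}$ — this is where one uses that we only need the $(d-1)$-dimensional Sobolev embedding, hence enough tangential derivatives always suffice regardless of $d$. (Alternatively, follow~\cite{LN03}: get tangential H\"older continuity via an anisotropic Sobolev inequality; for an $L^\infty$ bound the derivative count is what matters.)

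\textbf{Recovering the normal derivative.} The third step uses the equation itself. Write the system in the form $\partial_d\big((\mathbb{L}\nabla u)e_d\big)=-\sum_{k=1}^{d-1}\partial_k\big((\mathbb{L}\nabla u)e_k\big)$, i.e. the ``normal flux'' $J_d:=(\mathbb{L}\nabla u)e_d$ satisfies $\partial_d J_d = -\sum_{k=1}^{d-1}\partial_k\big(\mathbb{L}(\nabla u)e_k\big)$ in $\mathscr{D}'$. Each term $\mathbb{L}(\nabla u)e_k$ on the right involves only $\nabla u\,e_k$ for $k\le d-1$ paired against the (bounded, measurable in $x_d$) coefficient; differentiating tangentially and using Step 2 we see the right-hand side is a tangential derivative of an $L^\infty_{\loc}$ field, and similarly $J_d$ has bounded tangential derivatives. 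Combined with the fact that $\partial_d J_d$ equals a sum of tangential derivatives of bounded fields, a one-dimensional integration in $x_d$ (for a.e.\ fixed $x'$) together with an $L^2$-to-$L^\infty$ argument shows $J_d\in L^\infty_{\loc}(B)$, with the bound $\|J_d\|_{L^\infty(\frac12 B)}\lesssim\|\nabla u\|_{\underline L^2(B)}$. Finally, I would invoke strong ellipticity: from $\mathbb{L}\in\mathcal{L}_\beta$ one has a pointwise bound $|\nabla u\, e_d|\lesssim_\beta |J_d| + |\nabla' u|$ (this is exactly the linear, constant-in-$F$ analogue of Lemma~\ref{l:edmonoton}: test the monotonicity/coercivity of $\mathbb{L}$ against $(\nabla u\,e_d)\otimes e_d$ and use Young), hence $\nabla u = \nabla' u \oplus \partial_d u\in L^\infty_{\loc}$ with the asserted estimate $\|\nabla u\|_{L^\infty(\frac12 B)}\le c(\beta,d)\|\nabla u\|_{\underline L^2(B)}$.

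\textbf{Main obstacle.} The delicate point is the transition from ``arbitrarily many tangential derivatives in $L^2_{\loc}$'' to genuine $L^\infty$ control in Steps 2 and 3, because one must handle the interplay between the (only H\"older, or merely bounded) $x_d$-dependence of $\mathbb{L}$ and the need to integrate in $x_d$: one cannot differentiate in $x_d$, so the normal regularity must be extracted purely from the PDE via the flux $J_d$, and the anisotropic Sobolev/slicing argument must be organized so that the constant depends only on $\beta$ and $d$ and not on any modulus of continuity of $\mathbb{L}$. This is precisely the content of~\cite{CKV86} and is reproduced in detail in Appendix~\ref{sec:proofkinderlehrer}; here I have only sketched its skeleton.
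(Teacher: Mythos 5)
Your outline matches the paper's proof in Appendix~\ref{sec:proofkinderlehrer} essentially step for step: tangential difference quotients to bound $\nabla'^{\ell}\nabla u$ in $L^2$, transfer to the flux $J_d=(\mathbb L\nabla u)e_d$ using $\partial_d J_d=-\sum_{j<d}\partial_j(\mathbb L\nabla u)e_j$, an anisotropic Sobolev inequality to get $\|\nabla' u\|_{L^\infty}+\|J_d\|_{L^\infty}$, and ellipticity (the linear instance of Lemma~\ref{l:edmonoton}) to recover $\partial_d u$. One small imprecision to be aware of: a pure $(d-1)$-dimensional Sobolev embedding slice-by-slice in $x_d$ only gives $L^2_{x_d}L^\infty_{x'}$, not $L^\infty$; the argument genuinely needs the anisotropic Sobolev inequality of \cite[Lemma~2.2]{LN03}, which also uses $L^2$-bounds on $\nabla'^{\ell}\partial_d f$ --- available for $\nabla' u$ from Step~1 and for $J_d$ from the equation --- so this should be your primary route rather than the alternative.
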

Lemma~\ref{L:estlin1a} is essentially contained in \cite[Theorem~2]{CKV86} (see also \cite[Theorem~1.6]{LN03}); for the convenience of the reader, we present a proof in the Appendix~\ref{sec:proofkinderlehrer}.

Before, we come to the proofs of Proposition~\ref{L:estmorrey} and Corollary~\ref{P:nonlinearlayer}, we state and prove a variant of the Poincar\'e inequality. 

\begin{lemma}\label{L:poincaretan}
Fix $d\geq2$. There exists $c=c(d)<\infty$ such that for all $u\in L_{\loc}^2(\R^d)$ and all $Q=Q_R(x_0)$, with $x_0\in\R^d$ and $R>0$, it holds
\begin{equation}\label{poincaretan}
 \|u-\overline{(u)}_{Q}\|_{\underline L^2(Q)}\leq c |Q|^\frac1d \|{\nabla'} u\|_{\underline L^2(Q)}, 
\end{equation}
where $\overline{(u)}_Q\in L_\loc^2(\R^d)$ is defined as
\begin{equation}\label{def:overlineq}
 \overline{(u)}_Q(x):=\fint_{x_0+(-\frac{R}{2},\frac{R}{2})^{d-1}}u(y',x_d)\,dy'.
\end{equation}

\end{lemma}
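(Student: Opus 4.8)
The plan is to reduce \eqref{poincaretan} to the one-dimensional Poincar\'e inequality applied slice-by-slice in the directions orthogonal to $e_d$, and then integrate in $x_d$. Write $Q = Q_R(x_0) = x_0 + (-\tfrac R2,\tfrac R2)^d$, and split coordinates as $x=(x',x_d)$ with $x'\in\R^{d-1}$. For each fixed $x_d$ in the relevant interval, the function $x'\mapsto u(x',x_d)$ lies in $L^2$ of the $(d-1)$-cube $Q' := x_0' + (-\tfrac R2,\tfrac R2)^{d-1}$, and $\overline{(u)}_Q(x) = \fint_{Q'} u(y',x_d)\,dy'$ is precisely its average over that slice. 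So the standard Poincar\'e inequality on the cube $Q'$ (with the sharp scaling $R$, up to a dimensional constant) gives, for a.e.\ $x_d$,
\begin{equation*}
  \int_{Q'} |u(x',x_d) - \overline{(u)}_Q(x',x_d)|^2\,dx' \;\le\; c(d)\,R^2 \int_{Q'} |\nabla' u(x',x_d)|^2\,dx'.
\end{equation*}

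**Carrying it out.** First I would record the scalar Poincar\'e–Wirtinger inequality on $(-\tfrac12,\tfrac12)^{d-1}$ with mean-zero normalization, then rescale to side length $R$ to get the factor $R^2$ (equivalently $|Q|^{2/d}$, since $|Q| = R^d$). Next I would integrate the displayed slicewise estimate over $x_d \in x_{0,d} + (-\tfrac R2,\tfrac R2)$; by Fubini the left side becomes $\|u - \overline{(u)}_Q\|_{L^2(Q)}^2$ and the right side becomes $c(d)R^2 \|\nabla' u\|_{L^2(Q)}^2$. Dividing by $|Q|$ to pass to the normalized $\underline{L}^2$-norms on both sides (the $|Q|$ cancels) and taking square roots yields exactly \eqref{poincaretan} with $|Q|^{1/d} = R$. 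A minor technical point to check is measurability of $x_d \mapsto \overline{(u)}_Q(\cdot,x_d)$ and of the slice integrals, which follows from Fubini's theorem applied to $u \in L^2_{\loc}$ on $Q$; one should also note that $u$ need only be assumed locally $L^2$, and the inequality is trivially true (both sides possibly infinite, but finite if $\nabla' u \in L^2$) in the general case.

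**Main obstacle.** There is no serious obstacle here — this is a routine slicing argument. The only thing requiring a line of care is getting the scaling constant right (that the geometric factor is $|Q|^{1/d}$, not some other power), and confirming that $\overline{(u)}_Q$ as defined in \eqref{def:overlineq} is the correct slice-average so that the mean-zero hypothesis of the scalar Poincar\'e inequality is met on each slice; both are immediate from the definitions. I would also remark, for later use in the proofs of Proposition~\ref{L:estmorrey} and Corollary~\ref{P:nonlinearlayer}, that the same argument works verbatim with $Q$ replaced by a ball $B$, with $\overline{(u)}_B$ defined analogously by averaging over the $(d-1)$-dimensional cross-sections, at the cost of a dimensional constant.
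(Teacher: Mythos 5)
Your proof is correct and follows essentially the same slicing argument as the paper: apply the $(d-1)$-dimensional Poincar\'e inequality on each slice $\{x_d=\text{const}\}$ and integrate in $x_d$ via Fubini. The paper's only cosmetic difference is that it first treats $u\in C^1$ and then passes to general $u$ by density, which sidesteps the technical point (that you flag but do not fully resolve) of ensuring the weak gradient $\nabla' u$ restricts to the weak gradient on a.e.\ slice.
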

\begin{proof}

Without loss of generality, we can suppose $Q=(-\frac{R}{2},\frac{R}{2})^d$. Suppose $u\in C^1(\R^d)$. Then, $u_{z}(\cdot)=u(\cdot,z)$ satisfies $u_{z}\in C^1(\R^{d-1})$ for every $z\in\R$. The classical Poincar\'e inequality yields
\begin{align*}
 \fint_{(-\frac{R}2,\frac{R}2)^{d-1}}| u(x',z) - \fint_{(-\frac{R}2,\frac{R}2)^{d-1}} u(y',z)\,dy'|^2 \,dx' \leq c^2R^2\fint_{(-\frac{R}2,\frac{R}2)^{d-1}}|{\nabla'} u(x',z)|^2 \,dx'.
\end{align*}
Applying on both sides of the above inequality $\fint_{-\frac{R}{2}}^{\frac{R}{2}}\,dz$, we obtain \eqref{poincaretan} for $u\in C^1(\R^d)$. The general case follows by standard approximation arguments.
\end{proof}

\subsubsection{Proof of Proposition~\ref{L:estmorrey}}\label{sec:pf:L:estmorrey}

\begin{proof}[Proof of Proposition~\ref{L:estmorrey}]

Throughout the proof we write $\lesssim$ if $\leq$ holds up to a multiplicative constant which depends only on $\beta$ and $d$. Since, we use the Poincar\'e-type inequality Lemma~\ref{L:poincaretan} it is more convenient to work with cubes instead of balls. Fix $Q=Q_R(x_0)$ for some $x_0\in\R^d$ and $R>0$, and suppose that
\begin{equation}\label{eq:layercube}
 \divv \bfa(\nabla v)=0\qquad\mbox{in $\mathscr D'(Q)$.}
\end{equation}
We claim that there exist $\gamma\in(0,1)$, $\overline \kappa=\overline \kappa(\beta,d)>0$ and $c=c(\beta,d)\in[1,\infty)$ such that if $v$ satisfies \eqref{eq:layercube} and 
\begin{equation*}
 \|\nabla' v - (\nabla' v)_{Q}\|_{\underline L^2(Q)}\leq \begin{cases}
                                                          \infty&\mbox{if $d=2$,}\\
                                                          \overline \kappa&\mbox{if $d\geq3$,}
                                                         \end{cases}
\end{equation*}
then
\begin{align}\label{est:decaynablavcube}
 \sup_{r\in(0,1]}r^{-\gamma}\|\nabla' v - (\nabla' v)_{rQ}\|_{\underline L^2(r Q)}\leq& c\|{\nabla'} v - ({\nabla'} v)_{Q}\|_{\underline L^2(Q)}\\
 \sup_{r\in(0,1]}r^{-\gamma}\|J_d(v) - (J_d(v))_{rQ}\|_{\underline L^2(r Q)}\leq& c\|{\nabla'} v - ({\nabla'} v)_{Q}\|_{\underline L^2(Q)}\label{est:decayJcube}.
\end{align}
Clearly, this implies the statement of Proposition~\ref{L:estmorrey}. Moreover, we notice that Lemma~\ref{L:energyestimates} holds verbatim if we replace the ball $B$ by any cube $Q$, and we fix $\mu=\mu(\beta,d)>0$ to be the Meyers exponent given in  Lemma~\ref{L:energyestimates} (in its version for cubes). 

\smallskip

\step{1} One step improvement.

Define
\begin{equation*}
 \varphi(\tau,Q):= |\tau Q|^\frac{2}{d} \|\nabla \nabla' v\|_{\underline L^2(\tau Q)}^2\qquad(\tau>0,\quad Q\subset\R^d).
\end{equation*}
We claim that there exist $c_1=c_1(\beta,d)<\infty$ and $q=q(\beta,d)\in(0,1]$ such that for all $\tau\in (0,1]$, 
\begin{align}\label{apriorvarphi}
 \varphi(\tau,  Q) \leq& c_1\left(1+ \omega(c_1\varphi(1, Q)^\frac12)^q\tau^{-d}\right)\tau^2\varphi(1, Q),
\end{align}
where $\omega$ is given in \eqref{def:omega}.

\smallskip

Let $\mathbb L_v\in L^\infty(\R^d,\R^{d^4})$ be given by \eqref{def:Lv}, let $ \overline{(\nabla v)}_{Q}$ denote the average as in \eqref{def:overlineq}, and consider $\overline{\mathbb L}_{v}\in L^\infty(\R^d,\R^{d^4})$ given by
\begin{equation*}
 \overline{\mathbb L}_{v}(x):= D\bfa\left(x_d, \overline{(\nabla v)}_{Q}(x_d)\right).
\end{equation*}
Recall that $\bfa(x_d,\cdot)\in \mathcal A_\beta$ for a.e. $x_d\in\R$ implies
\begin{equation}\label{diff:lvlrx0}
 |\mathbb L_v-\overline{\mathbb L}_{v}|\leq \tfrac1\beta \omega(|\nabla v - \overline{(\nabla v)}_{Q}|)\quad\mbox{a.e.} 
\end{equation}
Let $w_i\in \partial_i v+H_0^1(\frac{1}2Q)$ be the unique solution to
\begin{equation}\label{eq:defwi:b}
 \divv \overline{\mathbb L}_{v}\nabla w_i=0\qquad\mbox{in $\mathscr D'(\frac{1}2Q)$}. 
\end{equation}
By the Lipschitz estimate Lemma~\ref{L:estlin1a}, we obtain for all $\tau\in(0,\frac12]$,
\begin{align*}
 \int_{\tau Q}|\nabla w_i(x)|^2\,dx\lesssim \tau^d\int_{\frac{1}2Q}|\nabla w_i(x)|^2\,dx\lesssim \tau^d\int_{Q}|\nabla \partial_i v(x)|^2\,dx,
\end{align*}
and thus 
\begin{align*}
 \int_{\tau Q}|\nabla \partial_i v(x)|^2\,dx \lesssim \tau^d\int_{Q}|\nabla \partial_i v(x)|^2\,dx +\int_{\frac{1}2Q} |\nabla \partial_i v(x) - \nabla w_i(x)|^2\,dx.
\end{align*}
Since $\partial_i v=w$ on $\partial(\frac12 Q)$, the ellipticity of $\overline{\mathbb L}_{v}$, \eqref{eq:diffxi} and \eqref{eq:defwi:b} yield
\begin{align*}
 \beta\|\nabla \partial_i v - \nabla w_i\|_{L^2(\frac{1}2Q)}^2
 \leq& \int_{\frac{1}2Q} \langle \overline{\mathbb L}_{v}(x_d) (\nabla w_i(x) - \nabla \partial_i v(x)), \nabla w_i(x) - \nabla \partial_i v(x)\rangle\,dx\\
 =&\int_{\frac{1}2Q} \langle (\mathbb L_v(x)- \overline{\mathbb L}_{v}(x_d)) \nabla \partial_i v(x), \nabla w_i(x) - \nabla \partial_i v(x)\rangle\,dx,
\end{align*}
and thus (using \eqref{diff:lvlrx0})
\begin{align*}
 \|\nabla \partial_i v - \nabla w_i\|_{L^2(\frac{1}2Q)}^2\lesssim \int_{\frac{1}2Q} \omega(|\nabla v(x)-\overline{(\nabla v)}_{Q}(x_d)|)^2|\nabla \partial_i v(x)|^2\,dx.
\end{align*}
By Meyers estimate \eqref{est:meyer:vi} and $\omega\leq1$, cf.~\eqref{def:omega}, we obtain
\begin{align*}
 \|\omega(|\nabla v- \overline{(\nabla v)}_{Q}|)\nabla \partial_i v\|_{\underline L^2(\frac{1}2Q)}
 \leq&\|\omega(|\nabla v- \overline{(\nabla v)}_{Q}|)\|_{\underline L^{2\frac{(2+\mu)}{\mu}}(\frac{1}2Q)}\|\nabla \partial_i v\|_{\underline L^{2+\mu}(\frac{1}2Q)}\\
 \lesssim&\|\omega(|\nabla v- \overline{(\nabla v)}_{Q}|)\|_{\underline L^1(\frac{1}2Q)}^\frac{\mu}{2(2+\mu)}\|\nabla \partial_i v\|_{\underline L^2(Q)}.
\end{align*}
Since $\omega$ is concave and monotone increasing, we obtain with the help of the Poincar\'e type inequality \eqref{poincaretan} and the H\"older inequality that there exists $c=c(d)<\infty$ such that
\begin{align*}
 \fint_{\frac{1}2Q} \omega(|\nabla v(x)-\overline{(\nabla v)}_{Q}(x_d)|)\,dx\leq& \omega\left(\fint_{\frac{1}2Q}|\nabla v(x) - \overline{(\nabla v)}_{Q}(x_d)|\,dx\right)\\
 \leq&  \omega(c\varphi(1,Q)^\frac12).
\end{align*}
Altogether, there exists $c=c(\beta,d)<\infty$ such that for all $i\in\{1,\dots,d-1\}$ and $\tau\in(0,\frac12]$, it holds
\begin{align*}
 \int_{\tau Q}|\nabla \partial_i v |^2\leq& c\left(\tau^d+\omega(c\varphi(1,Q)^\frac12)^\frac{\mu}{2+\mu}\right)\int_{Q}|\nabla \partial_i v|^2.
\end{align*}
The claim~\eqref{apriorvarphi} follows by summing over $i=1$ to $i=d-1$.

\step{2} Morrey space estimates for $\nabla \nabla' v$.

\smallskip

In this step we appeal to a smallness condition. We claim that for every $\gamma\in(0,1)$ there exists $\overline \kappa_1=\overline \kappa_1(\beta,d,\gamma)>0$ and $c=c(\beta,\gamma,d)<\infty$ such that \eqref{eq:layercube} and
\begin{equation}\label{eq:layersmallcube1}
 \|\nabla' v - (\nabla' v)_{Q}\|_{\underline L^2(Q)}\leq \overline \kappa_1,
\end{equation}
imply
\begin{equation}\label{est:zimorrey}
\sup_{r\in(0,\frac12]}r^{1-\gamma}\|\nabla \nabla' v\|_{\underline L^2(r Q)}\leq c|Q|^{-\frac1d}\|{\nabla'} v - ({\nabla'} v)_{Q}\|_{\underline L^2(Q)}.
\end{equation}
The inequality \eqref{est:zimorrey} follows from \eqref{apriorvarphi} by a standard iteration argument. For the convenience of the reader, we recall it here following closely the discussion in \cite[p.~170 ff.]{Giaquinta83}.

\substep{2.1} In order to iterate \eqref{apriorvarphi}, we have to establish smallness of $\varphi(1,Q)$ for some $Q\subset\R^d$. To this end, we note that there exists $c=c(\beta,d)<\infty$ such that 
\begin{equation}\label{varphiapriori1}
 \varphi(1,\tfrac12Q)\leq c \|{\nabla'} v - ({\nabla'} v)_{Q}\|_{\underline L^2(Q)}^2.
\end{equation}
Indeed, this is a direct consequence of \eqref{eq:diffxi} and the corresponding Caccioppoli inequality:
\begin{align*}
 \varphi(1,\tfrac12Q)=& |\tfrac12 Q|^\frac{2}{d}\|\nabla \nabla' v\|_{\underline L^2(\frac12 Q)}^2\lesssim\|\nabla' v - (\nabla' v)_{Q}\|_{\underline L^2(Q)}^2. 
\end{align*}

\substep{2.2} Iteration.

\smallskip

By Step~1, i.e.,~\eqref{apriorvarphi}, there exists $c_1=c_1(\beta,d)<\infty$ and $q=q(\beta,d)\in(0,1]$ such that for every $\tau\in(0,1)$ it holds
\begin{align*}
 \varphi(\tau,\tfrac12Q)\leq c_1(1+\omega(c_1 \varphi(1,\tfrac12Q)^\frac12)^q\tau^{-d})\tau^2\varphi(1,\tfrac12 Q).
\end{align*}
For given $\gamma\in(0,1)$, we choose $\tau=\tau(\beta,d,\gamma)\in(0,1)$ such that $2c_1\tau^{2(1-\gamma)}=1$. Furthermore, fix $\delta_1=\delta_1(\beta,d,\gamma)\in(0,1]$ satisfying $\omega(c_1 z^\frac12)^q\tau^{-d}\leq 1$ for all $z\in(0,\delta_1]$. Hence, for $\overline \kappa_1=\overline\kappa_1(\beta,\gamma,d)>0$, we obtain from \eqref{eq:layersmallcube1} and \eqref{varphiapriori1} that $\varphi(1,\frac12 Q)\leq\delta_1$ and thus with \eqref{apriorvarphi},
\begin{equation}\label{est:morrey:iter}
 \varphi(\tau,\tfrac12 Q)\leq \tau^{2 \gamma}\varphi(1,\tfrac12 Q).
\end{equation}
Clearly, \eqref{est:morrey:iter} can be iterated and we obtain for all $k\in\N$, 
\begin{align*}
 \varphi(\tau^k,\tfrac12Q)\leq& \tau^{2k\gamma}\varphi(1,\tfrac12 Q) \leq \delta_1 \tau^{2 k \gamma}.
\end{align*}
Thus, there exists $c=c(\beta,\gamma,d)<\infty$ such that for every $r\in(0,\frac{1}{2}]$ it holds
\begin{equation*}
 r^{2\gamma}\|\nabla \nabla' v\|_{\underline L^2(rQ)}^2\leq c |Q|^{-\frac{2}d} \|{\nabla'} v - ({\nabla'} v)_{Q}\|_{\underline L^2(Q)}^2, 
\end{equation*}
which proves the claim.

\step{3} Morrey space estimates for $\nabla J_d(v)$.

Suppose that \eqref{est:zimorrey} is valid. We claim that there exists $c=c(\beta,\gamma,d)<\infty$ such that 
\begin{equation}\label{est:sigmals}
 \sup_{r\in(0,\frac12]}r^{1-\gamma}\|\nabla J_d\|_{\underline L^2(r Q)}\leq c |Q|^{-\frac1d}\|{\nabla'} v - ({\nabla'} v)_{Q}\|_{\underline L^2(Q)}.
\end{equation}
First we notice that \eqref{est:zimorrey}, the chain rule and \eqref{ass:a}--\eqref{def:omega} imply that there exists $c=c(\beta,\gamma,d)<\infty$ such that
\begin{equation}\label{est:sigmamorrey1}
 \sup_{r\in(0,\frac12]}r^{1-\gamma}\|{\nabla'} \bfa(\cdot,\nabla v)\|_{\underline L^2(r Q)}\leq c|Q|^{-\frac1d}\|{\nabla'} v - ({\nabla'} v)_{Q}\|_{\underline L^2(Q)}.
\end{equation}
Clearly, \eqref{est:sigmamorrey1} implies \eqref{est:sigmals} with $\nabla J_d$ replaced by ${\nabla'}J_d$. Hence, it is left to show the corresponding estimate for $\partial_d J_d$. For this we use the equation \eqref{eq:nonlinearxdf}. For every $\eta\in C_c^\infty(Q,\R^d)$,
\begin{align*}
 \int_{Q}J_d(x) \cdot \partial_d \eta(x)\,dx=&-\sum_{j=1}^{d-1}\int_{Q} \bfa(x_d,\nabla v(x))e_j\cdot\partial_j \eta(x)\,dx\\
 =&\sum_{j=1}^{d-1}\int_{Q} \partial_j \bfa(x_d,\nabla v(x))e_j\cdot \eta(x)\,dx.
\end{align*}
Hence, $|\partial_d J_d|\lesssim |\nabla' J_d|\lesssim |\nabla' \bfa(\cdot,\nabla v)|$ a.e.~in $Q$, and in combination with \eqref{est:sigmamorrey1} we have \eqref{est:sigmals}.

\step{4} Conclusion

Choose, $\gamma=\gamma(\beta,d)\in(0,1)$ as
\begin{equation*}
 \gamma=\begin{cases}
         \frac{\mu}{2+\mu}&\mbox{if $d=2$,}\\\frac12&\mbox{if $d\geq3$.}
        \end{cases}
\end{equation*}
We first consider $d\geq3$. We  choose  $\overline \kappa=\overline \kappa(\beta,d,\frac12)>0$ to the effect that by the Poincar\'e inequality (cf.\ Lemma~\ref{L:poincaretan}), the combination of Step~2 and 3 yields  \eqref{est:decaynablavcube} and \eqref{est:decayJcube}. Finally, we consider the case $d=2$. Note that, it suffices to prove \eqref{est:zimorrey} for $\gamma=\frac{\mu}{2+\mu}$ without any smallness assumption. We have (thanks to the Meyer's estimate of Lemma~\ref{L:energyestimates})
\begin{align*}
 \sup_{r\in(0,\frac14]}r^{1-\gamma}\|\nabla\nabla' v\|_{\underline L^2(rQ)}\leq& \sup_{r\in(0,\frac14]}r^{\frac{2}{2+\mu}}\frac{|\frac14 Q|^\frac{1}{2+\mu}}{|r Q|^\frac{1}{2+\mu}}\|\nabla \nabla'v\|_{\underline L^{2+\mu}(\frac14Q)} \leq4^{-\frac{2}{2+\mu}}\|\nabla \nabla'v\|_{\underline L^{2+\mu}(\frac14Q)}
\\
 \lesssim& |Q|^{-\frac{1}{d}}\|\nabla' v - (\nabla' v)_{Q}\|_{\underline L^2(Q)},
\end{align*}
and thus \eqref{est:zimorrey} follows.

\end{proof}

\subsubsection{Proof of Corollary~\ref{P:nonlinearlayer}}\label{sec:nonlinearlayer}

\begin{proof}[Proof of Corollary~\ref{P:nonlinearlayer}]

Throughout the proof we write $\lesssim$ if $\leq$ holds up to a multiplicative constant which depends only on $\beta$ and $d$. 

Suppose that \eqref{kappasmall} holds with $\overline \kappa_1:=2^{-\frac{d}2}\overline \kappa_0$, with $\overline \kappa_0>0$, given as in Proposition~\ref{L:estmorrey}.

Fix $z\in \frac12 B$ and set $\hat B:=B_\frac{R}2(z)$, where $R>0$ denotes the radius of $B$. We claim that there exists $c=c(\beta,d)\in[1,\infty)$ such that
\begin{equation}\label{lip:layerz}
 \sup_{r\in(0,1]}\|\nabla u\|_{\underline L^2(r \hat B)}\leq c\|\nabla u\|_{\underline L^2(B)}.
\end{equation}
Clearly, estimate \eqref{lip:layerz}, the arbitrariness of $z\in \frac12 B$ and the Lebesgue differentiation theorem imply the desired Lipschitz estimate \eqref{P:nonlinearlayer:claim2}.

Let us now come to the proof of \eqref{lip:layerz}. In view of the trivial inequality $\|\nabla v\|_{\underline L^2(\hat B)}\leq 2^\frac{d}2\|\nabla v\|_{\underline L^2( B)}$ and the choice $\overline \kappa_1=2^{-\frac{d}2}\overline \kappa_0$, we can apply Proposition~\ref{L:estmorrey} with $B=\hat B$ and obtain that there exists $c_1=c_1(\beta,d)\in[1,\infty)$ and $\gamma\in(0,1)$ such that \eqref{kappasmall} implies 
\begin{equation}\label{est:layer1}
 \|\nabla' u - (\nabla' u)_{\tau \hat B}\|_{\underline L^2(\tau \hat B)}+\|J_d(u) - (J_d(u))_{\tau \hat B}\|_{\underline L^2(\tau \hat B)} \leq c_1 \tau^\gamma \|\nabla' u - (\nabla' u)_{\hat B}\|_{\underline L^2(\hat B)},
\end{equation}
for every $\tau\in(0,1)$. Choosing $\tau=\tau(\beta,d)\in(0,1)$ sufficiently small such that $ c_1\tau^\gamma\leq \frac12$, we can iterate the estimate \eqref{est:layer1} and obtain for $\hat B_k:=\tau^k\hat B$ that
\begin{equation*}
 \|\nabla' u - (\nabla' u)_{\hat B_k}\|_{\underline L^2(\hat B_k)}+\|J_d(u) - (J_d(u))_{\hat B_k}\|_{\underline L^2(\hat B_k)} \leq \frac1{2^k} \|\nabla' u - (\nabla' u)_{\hat B}\|_{\underline L^2(\hat B)}.
\end{equation*}
Hence for every $k\in\N$,
\begin{align*}
|(\nabla' u)_{\hat B_k}|\leq& \left|\sum_{i=0}^{k-1}(\nabla' u)_{\hat B_{i+1}}-(\nabla' u)_{\hat B_{i}}\right|+|(\nabla' u)_{\hat B}|\\
 \leq& \left(\sum_{i=0}^{k-1}\|\nabla' u - (\nabla' u)_{\hat B_{i}}\|_{\underline L^2(\hat B_{i+1})}\right)+\|\nabla' u\|_{\underline L^2(\hat B)}\\ 
 \leq&\left(\tau^{-\frac{d}{2}}\sum_{i=0}^{k-1}\|\nabla' u - (\nabla' u)_{\hat B_i}\|_{\underline L^2(\hat B_i)}\right)+\|\nabla u\|_{\underline L^2(\hat B)}\\
 \leq&\tau^{-\frac{d}{2}}\|\nabla' u - (\nabla' u)_{\hat B}\|_{\underline L^2(\hat B)}\sum_{i=0}^\infty2^{-i} +\|\nabla u\|_{\underline L^2(\hat B)}.
\end{align*}
Since $\tau=\tau(\beta,d)\leq1$, we obtain
\begin{align*}
 \|\nabla' u\|_{\underline L^2(\hat B_k)}\leq\|\nabla' u - (\nabla' u)_{\hat B_k}\|_{\underline L^2(\hat B_k)}+|(\nabla' u)_{\hat B_k}|\lesssim \|\nabla u\|_{\underline L^2(\hat B)}.
\end{align*}
Similarly,
\begin{equation*}
\|J_d(u)\|_{\underline L^2(\hat B_k)}\leq\|J_d(u) - (J_d( u))_{\hat B_k}\|_{\underline L^2(\hat B_k)}+|(J_d(u))_{\hat B_k}|\lesssim \|\nabla u\|_{\underline L^2(\hat B)}.
\end{equation*}
Hence, Lemma~\ref{l:edmonoton} (applied with $F=\nabla u$) yields
\begin{align*}
 \|\nabla u\|_{\underline L^2(\hat B_k)}\lesssim \|\nabla' u\|_{\underline L^2(\hat B_k)}+\|J_d(u)\|_{\underline L^2(\hat B_k)}\lesssim\|\nabla u\|_{\underline L^2(\hat B)},
\end{align*}
and thus \eqref{lip:layerz}.

\end{proof}

\subsection{Proof of Proposition~\ref{P:niren}}\label{sec:lipniren}

We start with a rather general perturbation lemma. The statement and its proof is similar to \cite[Lemma~3.1]{LN03} where an analogue result for linear systems is given.
\begin{lemma}[Perturbation lemma]\label{L:perturbation}
Fix $d\geq2$ and $\beta\in(0,1]$. Let $\bfa$ and $\bar \bfa$ be coefficient fields of class $\mathcal A_\beta^0$. There exist $c=c(\beta,d)\in[1,\infty)$ and $q=q(\beta,d)\in(0,1]$ such that for any ball $B\subset\R^d$ and every $w\in H^1(B)$ there exists $\bar w\in H^1(\frac12 B)$ with
\begin{align}
\divv \bar \bfa(\nabla\bar w)=&0\qquad\mbox{in $\mathscr D'(\frac12B)$,}\label{L:perturbation:eqv}\\
  \|\nabla\bar w\|_{\underline L^2(\frac12B)}\leq& c     \|\nabla w\|_{\underline L^2(B)},\label{est:pert2}\\
\| \nabla  w - \nabla \bar w\|_{\underline L^2(\frac12B)}\leq& c\left(\|\dd(\bfa,\bar \bfa)\|_{\underline L^2(B)}^q\|\nabla w\|_{\underline L^2(B)}+\|\divv \bfa(\nabla w)\|_{\underline H^{-1}(B)}\right).\label{est:pert}
\end{align}
\end{lemma}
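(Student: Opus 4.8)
The statement and its proof closely mirror the linear perturbation lemma \cite[Lemma~3.1]{LN03}; the plan is a \emph{two-step comparison}. First I would replace $w$ by the unique weak solution $\tilde w\in w+H_0^1(B)$ of the \emph{homogeneous} system $\divv\bfa(\nabla\tilde w)=0$ in $\mathscr D'(B)$, and then define $\bar w\in\tilde w+H_0^1(\tfrac12 B)$ as the unique weak solution of $\divv\bar\bfa(\nabla\bar w)=0$ in $\mathscr D'(\tfrac12 B)$; existence and uniqueness in both cases is a standard consequence of the theory of monotone operators (Browder--Minty), since for $\bfa,\bar\bfa\in\mathcal A_\beta^0$ the induced operators $H_0^1\to H^{-1}$ are demicontinuous, strictly monotone by \eqref{ass:monotonicity}, and coercive (combine \eqref{ass:monotonicity} with $\bfa(0)=\bar\bfa(0)=0$). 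By construction $\bar w$ satisfies \eqref{L:perturbation:eqv}. Testing the equations for $\tilde w$ resp.\ $\bar w$ with $\tilde w-w$ resp.\ $\bar w-\tilde w$ and using \eqref{ass:monotonicity}, $\bfa(0)=\bar\bfa(0)=0$ and the bound $|\bfa(F)|\le\beta^{-1}|F|$ from \eqref{ass:lip}, one gets $\|\nabla\tilde w\|_{\underline L^2(B)}\le\beta^{-2}\|\nabla w\|_{\underline L^2(B)}$ and $\|\nabla\bar w\|_{\underline L^2(\frac12 B)}\le\beta^{-2}\|\nabla\tilde w\|_{\underline L^2(\frac12 B)}$, which chain to \eqref{est:pert2} via $\|\cdot\|_{\underline L^2(\frac12 B)}\le2^{d/2}\|\cdot\|_{\underline L^2(B)}$. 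The first comparison follows by testing the two $\bfa$-equations (for $w$ and $\tilde w$) with $w-\tilde w\in H_0^1(B)$ and subtracting: strict monotonicity on the left and the definition \eqref{def:hminus} on the right give $\beta\|\nabla(w-\tilde w)\|_{\underline L^2(B)}^2\le\|\divv\bfa(\nabla w)\|_{\underline H^{-1}(B)}\,\|\nabla(w-\tilde w)\|_{\underline L^2(B)}$, hence
\begin{equation*}
\|\nabla(w-\tilde w)\|_{\underline L^2(B)}\le\beta^{-1}\|\divv\bfa(\nabla w)\|_{\underline H^{-1}(B)}.
\end{equation*}

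The core of the argument is the second comparison, between $\tilde w$ and $\bar w$. Since $\tilde w$ solves a homogeneous monotone system with $\bfa\in\mathcal A_\beta^0$ in all of $B$, Caccioppoli's inequality combined with the Sobolev--Poincar\'e inequality yields a reverse H\"older inequality for $\nabla\tilde w$, and Gehring's lemma upgrades it to a Meyers estimate: there are $\delta=\delta(\beta,d)>0$ and $c=c(\beta,d)<\infty$ with $\|\nabla\tilde w\|_{\underline L^{2+\delta}(\frac12 B)}\le c\,\|\nabla\tilde w\|_{\underline L^2(B)}$ (cf.\ \cite[Theorem~6.38]{GM12}); it is crucial here that $\tfrac12 B$ is strictly inside $B$, so this holds up to $\partial(\tfrac12 B)$. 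Testing the equations for $\tilde w$ and $\bar w$ with $\tilde w-\bar w\in H_0^1(\tfrac12 B)$, subtracting, writing $\bfa(\nabla\tilde w)-\bar\bfa(\nabla\bar w)=\big(\bfa(\nabla\tilde w)-\bar\bfa(\nabla\tilde w)\big)+\big(\bar\bfa(\nabla\tilde w)-\bar\bfa(\nabla\bar w)\big)$, applying \eqref{ass:monotonicity} to the second term and the pointwise bound $|\bfa(x,\nabla\tilde w)-\bar\bfa(x,\nabla\tilde w)|\le\dd(\bfa,\bar\bfa)(x)\,|\nabla\tilde w|$ to the first, one arrives at
\begin{equation*}
\beta\|\nabla(\tilde w-\bar w)\|_{\underline L^2(\frac12 B)}^2\le\fint_{\frac12 B}\dd(\bfa,\bar\bfa)\,|\nabla\tilde w|\,|\nabla(\tilde w-\bar w)|\,dx.
\end{equation*}
I would then bound the right-hand side by H\"older's inequality with exponents $\big(\tfrac{2(2+\delta)}{\delta},\,2+\delta,\,2\big)$, use the Meyers estimate for the $L^{2+\delta}$-factor, and absorb the $L^2$-factor of $\nabla(\tilde w-\bar w)$; for the $L^p$-factor of $\dd(\bfa,\bar\bfa)$ with $p=\tfrac{2(2+\delta)}{\delta}$, the interpolation bound $\|\dd(\bfa,\bar\bfa)\|_{\underline L^p(\frac12 B)}\le(2/\beta)^{1-q}\|\dd(\bfa,\bar\bfa)\|_{\underline L^2(\frac12 B)}^{q}$ with $q:=\tfrac2p=\tfrac{\delta}{2+\delta}\in(0,1]$ (which uses $\dd(\bfa,\bar\bfa)\le2/\beta$, cf.\ Remark~\ref{rem:a1-a2}) turns this into $\|\dd(\bfa,\bar\bfa)\|_{\underline L^2(B)}^{q}$ up to a constant. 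This gives $\|\nabla(\tilde w-\bar w)\|_{\underline L^2(\frac12 B)}\le c\,\|\dd(\bfa,\bar\bfa)\|_{\underline L^2(B)}^{q}\,\|\nabla w\|_{\underline L^2(B)}$, and combining with the first comparison via the triangle inequality $\|\nabla(w-\bar w)\|_{\underline L^2(\frac12 B)}\le\|\nabla(w-\tilde w)\|_{\underline L^2(\frac12 B)}+\|\nabla(\tilde w-\bar w)\|_{\underline L^2(\frac12 B)}$ proves \eqref{est:pert}.

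The step I expect to be the main obstacle is the estimate of the $\dd(\bfa,\bar\bfa)$-term, and in particular the requirement that the Meyers higher integrability be available on the \emph{same} ball over which one integrates. This is precisely why the comparison must be done in two steps: a direct perturbation of $w$ into a $\bar\bfa$-solution on $\tfrac12 B$ would force either $\nabla w$ or $\nabla\bar w$ on all of $\tfrac12 B$ into $L^{2+\delta}$, and neither is available --- $\nabla w$ has no extra integrability at all, while the Meyers estimate for $\bar w$ only holds strictly inside $\tfrac12 B$. Inserting the intermediate $\bfa$-harmonic map $\tilde w$, which lives on the \emph{full} ball $B$ and hence satisfies $\nabla\tilde w\in L^{2+\delta}(\tfrac12 B)$ by interior regularity, removes the difficulty. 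The remaining ingredients --- solvability of the Dirichlet problems via monotone operator theory, Caccioppoli/Gehring for monotone systems, and the bookkeeping of the normalized $\underline L^2$-, $\underline L^{2+\delta}$- and $\underline H^{-1}$-norms under the rescaling $\tfrac12 B\subset B$ --- are routine.
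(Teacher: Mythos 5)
Your proof is correct and follows the same global architecture as the paper's: reduce to the homogeneous case by first replacing $w$ with the $\bfa$-harmonic $\tilde w\in w+H_0^1(B)$ (the paper's $v$ in Step~2), estimate $\|\nabla(w-\tilde w)\|_{\underline L^2(B)}$ by $\|\divv\bfa(\nabla w)\|_{\underline H^{-1}(B)}$ via monotonicity, and then compare $\tilde w$ with a $\bar\bfa$-solution $\bar w$ on $\frac12 B$ using an interior Meyers estimate and H\"older/interpolation on the $\dd(\bfa,\bar\bfa)$-factor (using the a priori bound $\dd\le 2/\beta$).

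The one place you genuinely diverge is in the decomposition used for the comparison of $\tilde w$ with $\bar w$. You split $\bfa(\nabla\tilde w)-\bar\bfa(\nabla\bar w)=\big(\bfa(\nabla\tilde w)-\bar\bfa(\nabla\tilde w)\big)+\big(\bar\bfa(\nabla\tilde w)-\bar\bfa(\nabla\bar w)\big)$, apply monotonicity of $\bar\bfa$ to the second bracket, and place the $\dd$-factor against $|\nabla\tilde w|$ --- which enjoys the interior Meyers bound $\|\nabla\tilde w\|_{\underline L^{2+\delta}(\frac12 B)}\lesssim\|\nabla\tilde w\|_{\underline L^2(B)}$. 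The paper instead applies monotonicity of $\bfa$ (not $\bar\bfa$) and places $\dd$ against $|\nabla\bar w|$; since $\bar w$ solves a boundary-value problem on $\frac12 B$, its gradient has no interior higher integrability of its own on $\frac12 B$, and the paper must invoke a \emph{global} Meyers estimate (Lemma~\ref{L:meyerglobal}) for $\nabla(\tilde w-\bar w)\in H_0^1(\frac12 B)$ and then piece together $L^{2+\mu_1}$-integrability of $\nabla\bar w$ from that and from $\nabla\tilde w$. Your choice of decomposition sidesteps the global Meyers estimate entirely, which is a modest but real simplification; the Meyers exponent $q=\delta/(2+\delta)$ you obtain plays exactly the role of the paper's $q=\mu_1/(2+\mu_1)$. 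Both arguments yield the same result with the same dependence of the constants.
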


\begin{proof}
In the following, we write $\lesssim$ for $\leq$ up to a multiplicative constant depending only on $\beta$ and $d$.

\smallskip

\step{1} The case with zero right-hand side.

Suppose that
\begin{equation}\label{pert:zero}
\divv \bfa(\nabla w)=0\qquad\mbox{in $\mathscr D'(B)$.}
\end{equation}
Let $\bar w\in H^1(\frac12 B)$ be the unique solution to
\begin{equation}\label{pert:eqv}
 \divv \overline \bfa(\nabla \bar w)=0\quad\mbox{in $\mathscr D'(\frac12 B)$}\quad\mbox{and}\quad \bar w- w\in H_0^1(\tfrac12 B).
\end{equation}
Clearly $\bar w$ satisfies \eqref{L:perturbation:eqv}. We claim that $\bar w$ satisfies \eqref{est:pert2} and \eqref{est:pert}

\substep{1.1} Proof of \eqref{est:pert2}.

Combining \eqref{ass:a}--\eqref{ass:lip} and \eqref{pert:eqv}, we obtain
\begin{equation*}
 \beta \|\nabla \bar w\|_{L^2(\frac12 B)}^2\stackrel{\eqref{ass:monotonicity}}{\leq} \int_{\frac12 B}\langle \overline \bfa(\nabla \bar w),\nabla \bar w\rangle \,dx\stackrel{\eqref{pert:eqv}}{=} -\int_{\frac12 B}\langle \overline \bfa(\nabla \bar w),\nabla  w\rangle \,dx\stackrel{\eqref{ass:a},\eqref{ass:lip}}{\leq} \frac1\beta \int_{\frac12 B} |\nabla \bar w| |\nabla  w|\,dx
\end{equation*}
and \eqref{est:pert2} follows from Youngs inequality.

\substep{1.2} Proof of \eqref{est:pert}.

By the interior higher integrability of $\nabla w$, there exists $\mu=\mu(\beta,d)>0$ such that
\begin{equation}\label{pert:intmeyer}
 \|\nabla w\|_{\underline L^{2+\mu}(\frac12 B)}\lesssim \|\nabla w\|_{\underline L^2(B)}.
\end{equation}
Furthermore, \eqref{pert:eqv} implies that with $F:=\overline \bfa (\nabla w - \nabla \bar w)- \overline \bfa(\nabla \bar w)$,
\begin{equation*}
 \divv \overline \bfa(\nabla w- \nabla \bar w)=\divv F\quad \mbox{in $\mathscr D'(\frac12 B)$.}
\end{equation*}
Combining \eqref{ass:lip} and \eqref{pert:intmeyer}, we obtain
\begin{equation*}
 \|F\|_{\underline L^{2+\mu}(\frac12 B)}\lesssim \|\nabla w\|_{\underline L^{2+\mu}(\frac12 B)}\lesssim \|\nabla w\|_{\underline L^2(B)}.
\end{equation*}
Hence, by a global version of Meyer's estimate (see Lemma~\ref{L:meyerglobal} below) we find $\mu_1=\mu_1(\beta,d)\in(0,\mu]$ such that 
\begin{equation*}
 \|\nabla w-\nabla \bar w\|_{\underline L^{2+\mu_1}(\frac12 B)}\lesssim \|F\|_{\underline L^{2+\mu_1}(\frac12 B)}\lesssim  \|\nabla w\|_{\underline L^2(B)}.
\end{equation*}
Equations \eqref{pert:zero} and \eqref{pert:eqv} imply for all $\eta\in H_0^1(\frac12 B)$,
\begin{align*}
 \int_{\frac12 B}\langle \bfa(\nabla w)- \bfa(\nabla \bar w),\nabla \eta\rangle\,dx=\int_{\frac12 B}\langle \overline\bfa(\nabla \bar w) - \bfa(\nabla \bar w),\nabla \eta\rangle\,dx.
\end{align*}
Testing with $\eta=w-\bar w$ and appealing to the monotonicity of $\bfa$ (cf.~\eqref{ass:monotonicity}), \eqref{def:a1-a2} and Remark~\ref{rem:a1-a2}, we obtain that
\begin{align*}
 \|\nabla (w-\bar w)\|_{\underline L^2(\frac12 B)}\lesssim& \||\dd (\bfa,\overline \bfa)||\nabla \bar w|\|_{\underline L^2(\frac12 B)}
 \leq \|\dd (\bfa,\overline \bfa)\|_{\underline L^\frac{2(2+\mu_1)}{\mu_1}(\frac12 B)}\|\nabla \bar w\|_{\underline L^{2+\mu_1}(\frac12 B)}\\
 \lesssim&\|\dd (\bfa,\overline \bfa)\|_{\underline L^2(B)}^\frac{\mu_1}{2+\mu_1}\|\nabla w\|_{\underline L^2(B)},
\end{align*}
and thus \eqref{est:pert} with $q=q(\beta,d)=\frac{\mu_1}{2+\mu_1}\in(0,1)$.

\step{2} The general case.

Let $v\in H^1(B)$ be the unique solution to
\begin{equation}\label{pert:eqvf}
 \divv \bfa(\nabla v)=0\quad\mbox{in $\mathscr D'(B)$}\quad\mbox{and}\quad v- w\in H_0^1(B).
\end{equation}
As in Substep~1.1, we obtain
\begin{equation}\label{pert:estv1}
 \|\nabla v\|_{L^2(B)}\lesssim \|\nabla w\|_{L^2(B)}.
\end{equation}
Moreover, the monotonicity of $\bfa$ and \eqref{pert:eqvf} yield
\begin{equation*}
 \beta\|\nabla w-\nabla v\|_{\underline L^2(B)}^2\leq \fint_B \langle \bfa(\nabla w)-\bfa(\nabla v),\nabla w-\nabla v\rangle\,dx=\fint_B \langle \bfa(\nabla w),\nabla w-\nabla v\rangle\,dx
\end{equation*}
and thus by definition of $\|\cdot\|_{\underline H^{-1}(B)}$
\begin{equation}\label{pert:estv2}
 \|\nabla w-\nabla v\|_{\underline L^2(B)}\lesssim \|\divv \bfa(\nabla w)\|_{\underline H^{-1}(B)}.
\end{equation}
Clearly \eqref{pert:eqvf}--\eqref{pert:estv2}, and Step~1 applied to $w=v$ yield the existence of $\bar w$ satisfying \eqref{L:perturbation:eqv}--\eqref{est:pert}.
\end{proof}

Finally, we come to the proof of Proposition~\ref{P:niren}, which is the core of the Lipschitz estimate:

\begin{proof}[Proof of Proposition~\ref{P:niren}]

Without loss of generality, we suppose that $\bar \bfa$ is a $e_d$-layered coefficient field. Throughout the proof we write $\lesssim$ if $\leq$ holds up to a multiplicative constant which depends only on $\beta$ and $d$. 

The proof is based on a decay estimate for the oscillation of the quantity
\begin{equation}\label{def:A}
 A:=(\nabla'u,J_d)\qquad\mbox{where}\qquad J_d:=\bfa(\cdot,\nabla u)e_d.
\end{equation}
Recall that in view of Lemma~\ref{l:edmonoton}, we find $\bar c=\bar c(\beta,d)\in[1,\infty)$ such that
\begin{equation}\label{est:au}
 \frac1{\bar c}|A|\leq |\nabla u|\leq \bar c |A| \qquad\mbox{a.e.}
\end{equation}

\step{1} Basic decay estimate.

Let $\gamma=\gamma(\beta,d)\in(0,1)$ be given as in Proposition~\ref{L:estmorrey}. In the following, $B_0\subset B$ denotes a arbitrary ball concentric with $B$. We claim that there exists $\overline\kappa=\overline\kappa(\beta,d)>0$, $c_1=c_1(\beta,d)\in[1,\infty)$ and $q=q(\beta,d)\in(0,1]$ such that if 
\begin{equation}\label{eq:smalluniren}
 \|A\|_{\underline L^2(B_0)} \leq\begin{cases}
                                         +\infty&\mbox{if $d=2$,}\\
                                         \overline \kappa&\mbox{if $d\geq3$,}
                                        \end{cases}
\end{equation}
then it holds for every $\tau\in(0,1]$,
\begin{align}\label{est:basicdecay}
 &\|A-(A)_{\tau B_0}\|_{\underline L^2(\tau B_0)}\notag\\
 \leq& c_1\big(\tau^\gamma \|A-(A)_{B_0}\|_{\underline L^2(B_0)}+\tau^{-\frac{d}{2}}\|\dd (\bfa,\overline \bfa)\|_{\underline L^2(B_0)}^q\|A\|_{\underline L^2(B_0)}+\tau^{-\frac{d}{2}}\|\divv \bfa(\nabla u)\|_{\underline H^{-1}(B_0)}\big).
\end{align}
Inequality \eqref{est:basicdecay} is a natural Schauder type estimate and  will be obtained by comparing $u$ with a suitable $\overline \bfa$ harmonic function. The first term on the right-hand side in \eqref{est:basicdecay} is a good term and comes from the regularity for layered coefficients, see Proposition~\ref{P:nonlinearlayer}. The second and third term are error terms which come from the perturbation of the coefficients and the right-hand side.

\smallskip

For the argument we note that by Lemma~\ref{L:perturbation}, there exists $\bar u\in H^1(\frac12 B_0)$, $c=c(\beta,d)\in[1,\infty)$ and $q=q(\beta,d)\in(0,1]$ such that
\begin{align}
 \divv \bar \bfa(\nabla \bar u)=&0\qquad\mbox{in $\mathscr D'(\frac12 B_0)$,}\label{eq:baru}\\
 \|\nabla \bar u\|_{\underline L^2(\frac12 B_0)}\leq& c\|\nabla u\|_{\underline L^2(B_0)} \label{est:baru2}\\
 \|\nabla u - \nabla \bar u\|_{\underline L^2(\frac12B_0)}\leq& c\left(\|\dd (\bfa,\bar \bfa)\|_{\underline L^2(B_0)}^q\|\nabla u\|_{\underline L^2(B_0)}+\|\divv \bfa(\nabla u)\|_{\underline H^{-1}(B_0)}\right)\label{est:baru1}.
\end{align}
Set 
\begin{equation*}
 \bar A:=(\nabla' \bar u,\bar J_d(\bar u))\qquad\mbox{where}\qquad \bar J_d(\bar u):=\bar \bfa(\cdot,\nabla \bar u)e_d.
\end{equation*}
Since the coefficients in \eqref{eq:baru} are $e_d$-layered, Proposition~\ref{L:estmorrey} and Corollary~\ref{P:nonlinearlayer} can be applied. For $d\geq3$ this requires the smallness conditions to be full filled: Let $\overline \kappa_0$, $\overline \kappa_1$ be as in Proposition~\ref{L:estmorrey} and Corollary~\ref{P:nonlinearlayer} and choose $\overline\kappa=\overline \kappa(\beta,d)>0$ sufficiently small such that $\overline \kappa\leq \frac{1}{c \bar c}\min\{\overline \kappa_0,\overline \kappa_1\}$, where $c,\bar c\in[1,\infty)$ are given as in \eqref{est:baru2} and \eqref{est:au}. Then \eqref{eq:smalluniren} implies $\|\nabla \bar u\|_{\underline L^2(\frac12 B_0)}\leq \min\{\overline k_0,\overline k_1\}$, and thus by Proposition~\ref{L:estmorrey} and Corollary~\ref{P:nonlinearlayer} there exists $\tilde c=\tilde c(\beta,d)\in[1,\infty)$ such that
\begin{align}
 \sup_{\tau\in(0,1]}\tau^{-\gamma}\|\bar A - (\bar A)_{\tau \frac12 B_0}\|_{\underline L^2(\tau \frac12 B_0)}\leq& \tilde c\|\nabla' \bar u - (\nabla' \bar u)_{\frac12 B_0}\|_{\underline L^2(\frac12B_0)},\label{est:ba0}\\
 \|\nabla \bar u\|_{L^\infty(\frac14B_0)}\leq& \tilde c\|\nabla \bar u\|_{\underline L^2(\frac12 B_0)}\label{est:ba1}.
\end{align}
From \eqref{est:ba0} we get for every $\tau\in(0,\frac12]$,
\begin{align}\label{est:aa1}
 \|A-(A)_{\tau\frac12 B_0}\|_{\underline L^2(\tau \frac12 B_0)}\leq& \|A - (\bar A)_{\tau\frac12 B_0}\|_{\underline L^2(\tau \frac12 B_0)}\notag\\
 \leq&  \|\bar A - (\bar A)_{\tau\frac12 B_0}\|_{\underline L^2(\tau \frac12 B_0)}+\|A - \bar A\|_{\underline L^2(\tau \frac12 B_0)}\notag\\
 \leq& \tilde c \tau^\gamma \|\nabla' \bar u - (\nabla' \bar u)_{\frac12 B_0}\|_{\underline L^2(\frac12 B_0)}+2^{-\frac{d}{2}}\tau^{-\frac{d}{2}}\|A - \bar A\|_{\underline L^2(\frac14 B_0)}.
\end{align}
The first term on the right-hand side in \eqref{est:aa1} can be estimated by
\begin{align}\label{est:aa2}
 \|\nabla' \bar u - (\nabla' \bar u)_{\frac12 B_0}\|_{\underline L^2(\frac12B_0)}\leq 2^\frac{d}{2}\|\nabla' u - (\nabla' u)_{B_0}\|_{\underline L^2(B_0)}+\|\nabla'  u - \nabla' \bar u\|_{\underline L^2(\frac12B_0)}.
\end{align}
A combination of \eqref{est:aa1}, \eqref{est:aa2} and the (yet to be proven) estimate
\begin{equation}\label{est:aa3}
 \|\nabla'  u - \nabla' \bar u\|_{\underline L^2(\frac12B_0)}+\|A - \bar A\|_{\underline L^2(\frac14B_0)} \lesssim \|\dd (\bfa,\bar \bfa)\|_{\underline L^2(B_0)}^q\|A\|_{\underline L^2(B_0)}+\|\divv \bfa(\nabla u)\|_{\underline H^{-1}(B_0)},
\end{equation}
implies for every $\tau\in(0,\frac14]$ that
\begin{align*}
  &\|A-(A)_{\tau B_0}\|_{\underline L^2(\tau B_0)}\\
 \lesssim& \tau^\gamma \|A-(A)_{B_0}\|_{\underline L^2(B_0)}+\tau^{-\frac{d}{2}}\|\dd (\bfa,\overline \bfa)\|_{\underline L^2(B_0)}^q\|A\|_{\underline L^2(B_0)}+\tau^{-\frac{d}{2}}\|\divv \bfa(\nabla u)\|_{\underline H^{-1}(B_0)}.
\end{align*}
Hence, it is left to show \eqref{est:aa3}. Clearly \eqref{est:au} and \eqref{est:baru1} imply
\begin{align}\label{est:nunbaru}
 \|\nabla  u - \nabla \bar u\|_{\underline L^2(\frac12B_0)}
 \leq& c\left(\bar c\|\dd (\bfa,\bar \bfa)\|_{\underline L^2(B_0)}^q\|A\|_{\underline L^2(B_0)}+\|\divv \bfa(\nabla u)\|_{H^{-1}(B_0)}\right).
\end{align}
Using the trivial pointwise estimate $|A-\bar A|\leq |\nabla u - \nabla \bar u| + |\bfa(\nabla u)-\overline \bfa (\nabla \bar u)|$, the Lipschitz-continuity of $\bfa$, cf.~\eqref{ass:lip} and the definition of $\dd(\bfa ,\overline \bfa)$, we obtain
\begin{align*}
 \|A - \bar A\|_{\underline L^2(\frac14 B_0)}\leq& 2^\frac{d}{2}\|\nabla u - \nabla \bar u\|_{\underline L^2(\frac12 B_0)}+\|\bfa (\nabla u) - \bar \bfa (\nabla \bar u)\|_{\underline L^2(\frac14 B_0)}\\
 \leq&2^\frac{d}{2}(1+\tfrac1\beta)\|\nabla u - \nabla \bar u\|_{\underline L^2(\frac12 B_0)}+\|\bfa (\nabla \bar u) - \bar \bfa (\nabla \bar u)\|_{\underline L^2(\frac14 B_0)}\\
 \leq&2^\frac{d}{2}(1+\tfrac1\beta)\|\nabla u - \nabla \bar u\|_{\underline L^2(\frac12 B_0)}+\|\dd(\bfa ,\bar \bfa)\|_{\underline L^2(\frac14B_0)} \|\nabla \bar u\|_{ L^\infty(\frac14 B_0)},
\end{align*}
and thus, by the estimates \eqref{est:nunbaru}, \eqref{est:baru2} and \eqref{est:ba1},
\begin{align*}
 \|A - \bar A\|_{\underline L^2(\frac14 B_0)}\lesssim&\|\dd (\bfa,\bar \bfa)\|_{\underline L^2(B_0)}^q\|A\|_{\underline L^2(B_0)}+\|\divv \bfa(\nabla u)\|_{H^{-1}(B_0)}+\|\dd(\bfa ,\bar \bfa)\|_{\underline L^2(\frac14B_0)} \|\nabla u\|_{\underline L^2( B_0)}.
\end{align*}
By $\|\dd (\bfa ,\overline \bfa)\|_{\underline L^2(B_0)}\leq\frac2\beta$ (cf.~Remark~\ref{rem:a1-a2}), $|\nabla u|\lesssim |A|$ (cf.~\eqref{est:au}), and $q\in(0,1]$, we arrive at 
\begin{align*}
 \|A - \bar A\|_{\underline L^2(\frac14 B_0)}\lesssim \|\dd (\bfa,\bar \bfa)\|_{\underline L^2(B_0)}^q\|A\|_{\underline L^2(B_0)}+\|\divv \bfa(\nabla u)\|_{\underline H^{-1}(B_0)}. 
\end{align*}
In combination with \eqref{est:nunbaru} we get \eqref{est:aa3}.

\step{2} Conditional iteration.

Consider $B_0:=r_0 B$ for some $r_0\in(0,\bar r_0]$ with $\bar r_0=\bar r_0(\beta,d,E,s)\in(0,1]$ specified below, see \eqref{small:r0}. Fix $\tau=\tau(\beta,d)\in(0,1)$ such that
\begin{equation}\label{tau:small}
 c_1\tau^\gamma = \frac1{4},
\end{equation}
where $\gamma=\gamma(\beta,d)\in(0,1)$ and $c_1=c_1(\beta,d)\in[1,\infty)$ are given as in Step~1, and set $B_k:=\tau^k B_0$ for $k\in\N$. Let $\overline\kappa=\overline\kappa(\beta,d)>0$ be as in Step~1. We claim that the following is true: Suppose for some $k\in \N_0$
\begin{equation}\label{cond:small}
 \|A\|_{\underline L^2(B_i)}\leq \begin{cases}
                                         \infty&\mbox{if $d=2$,}\\
                                         \overline \kappa&\mbox{if $d\geq3$,}
                                        \end{cases}\qquad\mbox{for all $i=0,\dots,k$,}
\end{equation}
then it holds (with the understanding $\sum_{i=0}^{k-1}=0$ for $k=0$) 
\begin{align}
\|A-(A)_{B_{k+1}}\|_{\underline L^2(B_{k+1})}\leq& \frac1{2}\|A-(A)_{B_k}\|_{\underline L^2(B_k)}  + \frac1{4} \left(\frac{r_0}{\bar r_0}\right)^{sq}\tau^{ksq}\left(|(A)_{B_k}|+H\right)\label{est:ai0}  \\
|(A)_{B_{k+1}}|\leq& c_2\left(\|A\|_{\underline L^2(B_0)}+ \left(\frac{r_0}{\bar r_0}\right)^{sq}\sum_{i=0}^{k-1} \tau^{isq}\left(|(A)_{B_i}|+ H\right)\right),\label{est:ai2}
\end{align}
where $c_2=c_2(\beta,d)=1+2\tau^{-\frac{d}{2}}$ and
\begin{equation}\label{def:H}
 H:=\sup_{r\in(0,1]}r^{-s}\|\divv \bfa(\nabla u)\|_{\underline H^{-1}(rB)}.
\end{equation}

\substep{2.1} Proof of \eqref{est:ai0}.

In view of Step~1, \eqref{tau:small} and \eqref{cond:small} imply for every $i=0,\dots,k$
\begin{align}\label{est:ai05}
  \|A-(A)_{B_{i+1}}\|_{\underline L^2(B_{i+1})}\leq& \frac1{4}\|A-(A)_{B_i}\|_{\underline L^2(B_i)}+c_1\tau^{-\frac{d}{2}}\|\dd (\bfa,\overline\bfa)\|_{\underline L^2(B_i)}^q\|A\|_{\underline L^2(B_i)}\notag\\
 &\qquad+c_1\tau^{-\frac{d}{2}}\|\divv \bfa(\nabla u)\|_{\underline H^{-1}(B_i)}.
\end{align}
Assumption \eqref{ass:a-abd} and the definition of $H$, see \eqref{def:H}, imply
\begin{equation}\label{decay:dd}
 \|\dd (\bfa ,\overline \bfa)\|_{\underline L^2(B_i)}\leq r_0^s \tau^{is}E\quad\mbox{and}\quad  \|\divv \bfa(\nabla u)\|_{\underline H^{-1}(B_i)}\leq r_0^s\tau^{is} H\leq r_0^{sq}\tau^{isq} H,
\end{equation}
where we use for the last inequality that $q,r_0,\tau \in(0,1]$. Choose $\bar r_0\in(0,1]$ sufficiently small such that
\begin{equation}\label{small:r0}
 c_1\tau^{-\frac{d}{2}}\bar r_0^{sq}\max\{E,1\}^q= \frac1{4}.
\end{equation}
Estimate \eqref{est:ai05}, the elementary inequality $\|A\|_{\underline L^2(B_i)}\leq \|A-(A)_{B_i}\|_{\underline L^2(B_i)}+|(A)_{B_i}|$, \eqref{decay:dd} and \eqref{small:r0} imply for all $r_0\in(0,\bar r_0]$ and all $i=0,\dots,k$ that
\begin{align}\label{est:ai}
  \|A-(A)_{B_{i+1}}\|_{\underline L^2(B_{i+1})}\leq& \frac1{2}\|A-(A)_{B_i}\|_{\underline L^2(B_i)}+\frac1{4} \tau^{isq}\left(\frac{r_0}{\bar r_0}\right)^{sq}\left(|(A)_{B_i}|+H\right),
\end{align}
and thus \eqref{est:ai0}.

\substep{2.2} Proof of \eqref{est:ai2}.

For every $i\in\N_0$, we have the following elementary inequality
\begin{align*}
 |(A)_{B_{i+1}}-(A)_{B_{i}}|\leq \|A - (A)_{B_i}\|_{\underline L^2(B_{i+1})}\leq \tau^{-\frac{d}{2}}\|A - (A)_{B_i}\|_{\underline L^2(B_i)},
\end{align*}
which implies
\begin{align}\label{eq:estai2a}
 |(A)_{B_{k+1}}|=|(A)_{B_0}|+\sum_{i=0}^k|(A)_{B_{i+1}}-(A)_{B_{i}}|\leq|(A)_{B_0}|+\tau^{-\frac{d}{2}}\sum_{i=0}^k\|A-(A)_{B_i}\|_{\underline L^2(B_i)}.
\end{align}
In particular, $|(A)_{B_{1}}|\leq (\tau^{-\frac{d}{2}}+1)\|A\|_{\underline L^2(B_0)}$, which proves the claim for $k=0$. Inequality \eqref{est:ai} yields for $k\in\N$
\begin{align*}
 \sum_{i=1}^k \|A-(A)_{B_i}\|_{\underline L^2(B_i)}\leq& \frac1{2}\sum_{i=0}^{k-1}\|A-(A)_{B_i}\|_{\underline L^2(B_i)}+\frac1{4} \left(\frac{r_0}{\bar r_0}\right)^{sq}\sum_{i=0}^{k-1}\tau^{isq}\left(|(A)_{B_i}|+H\right),
\end{align*}
and thus (by partially absorbing the first term on the right-hand side)
\begin{align}\label{eq:estai2b}
 \sum_{i=1}^k \|A-(A)_{B_i}\|_{\underline L^2(B_i)}\leq& \|A-(A)_{B_0}\|_{\underline L^2(B_0)}+\frac1{2} \left(\frac{r_0}{\bar r_0}\right)^{sq}\sum_{i=0}^{k-1}\tau^{isq}\left(|(A)_{B_i}|+H\right).
\end{align}
Combining \eqref{eq:estai2a} and \eqref{eq:estai2b}, we obtain
\begin{align*}
 |(A)_{B_{k+1}}|\leq& |(A)_{B_0}|+2\tau^{-\frac{d}{2}}\|A-(A)_{B_0}\|_{\underline L^2(B_0)}+\frac{\tau^{-\frac{d}2}}{2}\left(\frac{r_0}{\bar r_0}\right)^{sq} \sum_{i=0}^{k-1} \tau^{isq}\left(|(A)_{B_i}|+H\right),
\end{align*}
and thus \eqref{est:ai2}.

\step{3} Refined estimates.

In this step, we replace the recursively defined estimates for $|(A)_{B_k}|$ and $\|A - (A)_{B_k}\|_{\underline L^2(B_k)}$ in \eqref{est:ai2} and \eqref{est:ai0} by explicit estimates. Fix $t=t(\beta,d,s)\in(0,1]$ such that 
\begin{equation}\label{eq:r0small}
 t^{sq} = \frac{1}{2c_2}\left(\sum_{i=0}^\infty\tau^{isq}\right)^{-1}.
\end{equation}
Fix $r_0\in(0,t\bar r_0]$. We claim that for any $k\in\N_0$, \eqref{cond:small} implies 
\begin{align}\label{claim:ak}
 |(A)_{B_i}|\leq& 2c_2 \left(\|A\|_{\underline L^2(B_0)}+ H\right)\qquad\mbox{for all $i=0,\dots,k+1$},\\
 \|A-(A)_{B_{k+1}}\|_{\underline L^2(B_{k+1})}\leq& 2^{-(k+1)}\|A\|_{\underline L^2(B_0)}+ 2^{-1} \sum_{i=0}^k\tau^{isq}\left(\|A\|_{\underline L^2(B_0)}+ H\right).\label{claim:ak2}
\end{align}
We prove the claim by induction. Suppose \eqref{cond:small} for $k=0$. Then, \eqref{est:ai2} and \eqref{est:ai0} imply
\begin{equation*}
 |(A)_{B_1}|\leq c_2 \|A\|_{\underline L^2(B_0)},\quad \|A-(A)_{B_1}\|_{\underline L^2(B_1)}\leq \tfrac12\|A-(A)_{B_0}\|_{\underline L^2(B_0)}+\tfrac1{4} \left(\tfrac{r_0}{\bar r_0}\right)^{sq}\left(|(A)_{B_0}|+H\right),
\end{equation*}
and thus \eqref{claim:ak} and \eqref{claim:ak2}. Suppose the claim is valid for some $k\in\N_0$. We only need to show that \eqref{cond:small} with $k$ replaced by $k+1$, implies \eqref{claim:ak} and \eqref{claim:ak2} for $k$ replaced by $k+1$.

\substep{3.1} Proof of \eqref{claim:ak} with $k$ replaced by $k+1$.

By induction hypothesis \eqref{claim:ak}, we have $|(A)_{B_i}|\leq 2c_2 \left(\|A\|_{\underline L^2(B_0)}+ H\right)$ for $i=0,\dots,k+1$. Hence, by \eqref{est:ai2} for $r_0\in(0,t\bar r_0]$ we have,
\begin{align*}
  |(A)_{B_{k+2}}|\leq& c_2\left(\|A\|_{\underline L^2(B_0)}+ \left(\frac{r_0}{\bar r_0}\right)^{sq}\sum_{i=0}^{k} \tau^{isq}\left(|(A)_{B_i}|+ H\right)\right)\\
  \leq& c_2\left(\|A\|_{\underline L^2(B_0)}+ t^{sq}\sum_{i=0}^{\infty} \tau^{isq}\left(2c_2 \left(\|A\|_{\underline L^2(B_0)}+ H\right)+ H\right)\right)\\
  =&c_2\left(2\|A\|_{\underline L^2(B_0)}+(1+\frac1{2c_2})H\right)\\
  \leq& 2c_2\left(\|A\|_{\underline L^2(B_0)}+ H\right),
\end{align*}
which proves \eqref{claim:ak} with $k$ replaced by $k+1$.

\substep{3.2} Proof of \eqref{claim:ak2} with $k$ replaced by $k+1$.

Estimate \eqref{est:ai0} yields for $r_0\in(0,t\bar r_0]$ 
\begin{equation*}
\|A-(A)_{B_{k+2}}\|_{\underline L^2(B_{k+2})}\leq \frac1{2}\|A-(A)_{B_{k+1}}\|_{\underline L^2(B_{k+1})} +\frac1{4} t^{sq}\tau^{(k+1)sq}\left(|(A)_{B_{k+1}}|+H\right).
\end{equation*}
The induction hypothesis \eqref{claim:ak2} implies
\begin{equation*}
\frac1{2}\|A-(A)_{B_{k+1}}\|_{\underline L^2(B_{k+1})}\leq 2^{-(k+2)}\|A\|_{\underline L^2(B_0)}+ 2^{-2} \sum_{i=0}^k\tau^{isq}\left(\|A\|_{\underline L^2(B_0)}+ H\right).
\end{equation*}
Moreover, by the induction hypothesis \eqref{claim:ak} and $t^{sq}\leq \frac1{2c_2}$ (cf.\ \eqref{eq:r0small}), we obtain that
\begin{align*}
\frac1{4} t^{sq}\tau^{(k+1)sq}\left(|(A)_{B_{k+1}}|+H\right)\leq&\frac1{4} \frac1{2c_2}\tau^{(k+1)sq}\left(2c_2\left(\|A\|_{\underline L^2(B_0)}+H\right)+H\right)\\
\leq& \frac12 \tau^{(k+1)sq}\left(\|A\|_{\underline L^2(B_0)}+H\right),
\end{align*}
%
and the desired estimate \eqref{claim:ak2} with $k$ replaced by $k+1$ follows.

\step{4} Conclusion.

Let $\bar r_0,t$ and $\tau$ be as above and fix $r_0=t\bar r_0$. We show that there exists $\tilde \kappa=\tilde \kappa(\beta,d,s)>0$ and $c=c(\beta,d,s)\in[1,\infty)$ such that the smallness condition
\begin{equation*}
\max\{\|\nabla u\|_{\underline L^2(B_0)},H\}\leq \begin{cases}\infty&\mbox{if $d=2$,}\\\tilde\kappa&\mbox{if $d\geq3$},\end{cases}
\end{equation*}
implies
\begin{equation}\label{claimqk}
 \|\nabla u\|_{\underline L^2(B_k)}\leq c(\|\nabla u\|_{\underline L^2(B_0)}+H)\qquad\mbox{for all $k\in\N$.}
\end{equation}
Note that since $B_k=\tau^k r_0 B$ with $\tau=\tau(\beta,d)\in(0,1)$, estimate \eqref{claimqk} yields
\begin{equation*}
 \sup_{r\in(0,r_0]}\|\nabla u\|_{\underline L^2(rB)}\lesssim c(\|\nabla u\|_{\underline L^2(B_0)}+H)\leq cr_0^{-\frac{d}{2}}(\|\nabla u\|_{\underline L^2(B)}+H),
\end{equation*}
and thus the claim of Proposition~\ref{P:niren} is proven with the choice $\overline \kappa_0=r_0^\frac{d}{2}\tilde \kappa$.

\substep{4.1} The case $d=2$.

For $d=2$ the smallness condition is not active and \eqref{cond:small} is trivially satisfied for all $k\in\N$. Set 
\begin{equation}\label{def:hatc}
 \hat c:=2c_2+\sum_{i=0}^\infty 2^{-isq},
\end{equation}
where $c_2=c_2(\beta,d)\in[1,\infty)$ is given as in Step~2. In view of \eqref{est:au} it suffices to prove%
\begin{equation}\label{est:final2d}
  \|A\|_{\underline L^2(B_{k})}\leq \hat c \left(\|A\|_{\underline L^2(B_0)}+ H\right),
\end{equation}
to infer \eqref{claimqk} with $c=\hat c \bar c^2$. For \eqref{est:final2d}, we estimate the right-hand side in \eqref{claim:ak2} by
\begin{equation*}
 2^{-(k+1)}\|A\|_{\underline L^2(B_0)}+ 2^{-1} \sum_{i=0}^k\tau^{isq}\left(\|A\|_{\underline L^2(B_0)}+ H\right)\leq \sum_{i=0}^{k+1}2^{-isq}\left(\|A\|_{\underline L^2(B_0)}+ H\right),
\end{equation*}
where we use that by definition $\tau\in(0,\frac14)$ (see \eqref{tau:small}, where $c_1\geq1$ and $\gamma\in(0,1)$). In combination with \eqref{claim:ak} and \eqref{claim:ak2} we get
\begin{align*}
 \|A\|_{\underline L^2(B_k)}\leq& (\|A-(A)_{B_k}\|_{\underline L^2(B_k)}+|(A)_{B_k}|\\
 \leq& \left(\sum_{i=0}^{k+1}2^{-isq}+2c_2\right)\left(\|A\|_{\underline L^2(B_0)}+ H\right)
\end{align*}
and thus \eqref{est:final2d}.

\substep{4.2} The case $d\geq3$.

Let $\bar c=\bar c(\beta,d)\in[1,\infty)$ and $\hat c=\hat c(\beta,d,s)\in[1,\infty)$ be given by \eqref{est:au} and \eqref{def:hatc}. We claim that
\begin{equation}\label{est:finalsmallness}
 \max\{\bar c\|\nabla u\|_{\underline L^2(B_0)},H\}\leq \frac1{2 \hat c} \overline \kappa, 
\end{equation}
implies that \eqref{cond:small} is true for every $k\in\N$. The proof follows by induction. In view of \eqref{est:au}, the case $k=0$ is trivial. Suppose \eqref{cond:small} is true for some $k\in\N$. In view of Step~3, we have
\begin{align*}
 \|A\|_{\underline L^2(B_{k+1})} \leq& \hat c\left(\|A\|_{\underline L^2(B_0)}+H\right)\leq \hat c\left(\bar c\|\nabla u\|_{\underline L^2(B_0)}+H\right)\leq \overline \kappa,
\end{align*}
which proves the claim. Now, we can argue exactly as in Supstep~4.1 to show that \eqref{est:finalsmallness} implies \eqref{est:final2d}.

%

\end{proof}

\section{Uniform Lipschitz estimates}\label{sec:lip:hom}

In this section, we provide a proof of Theorem~\ref{T:Lipschitzeps}. For this we follow the idea introduced by Avellaneda \& Lin \cite{AL87} (for linear systems) to lift the $C^{1,\alpha}$-regularity of the homogenized equation to the operator with oscillating coefficients by appealing to a two-scale expansion. In the nonlinear situation considered here, solutions of the homogenized problem are in general not regular everywhere. However, they satisfy an excess decay provided the excess is small on some scale in the following sense: 
\begin{lemma}[Excess decay for constant coefficient monotone systems]\label{L:regconstant}
Fix $d\geq2$, $\beta\in(0,1]$, a modulus of continuity $\omega$ and $\bfa\in \mathcal A_{\beta,\omega}$.  For every $\gamma\in(0,1)$ there exist $\overline \kappa=\overline \kappa(\beta,d,\omega)>0$ and $c=c(\beta,d,\omega)\in[1,\infty)$ such that the following is true: Suppose that $v\in H^1(B)$ satisfies
\begin{equation}\label{eq:constant}
 \divv \bfa (\nabla v)=0\qquad\mbox{in $\mathscr D'(B)$,}
\end{equation}
and $\|\nabla v - (\nabla v)_{B}\|_{\underline L^2(B)}\leq \overline \kappa_0$. Then, 
\begin{align}\label{est:decaynablavconstant}
 \sup_{r\in(0,1]}r^{-\gamma}\|\nabla v - (\nabla v)_{rB}\|_{\underline L^2(r B)}\leq c\|{\nabla} v - ({\nabla} v)_{B}\|_{\underline L^2(B)}.
\end{align}
Moreover, for $d=2$, there exists $\gamma_{2d}>0$ such that \eqref{eq:constant} implies \eqref{est:decaynablavconstant} with $c=c(\beta)\in[1,\infty)$. 
\end{lemma}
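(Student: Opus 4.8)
The plan is to prove the decay by the classical $\varepsilon$-regularity scheme for monotone systems (freeze the coefficient at the mean gradient, compare $v$ with a linear constant-coefficient system, iterate on concentric balls), and to obtain the smallness-free two-dimensional statement by a separate Widman hole-filling argument. By scaling we may take $B=B_1$; throughout write $\Phi(r):=\|\nabla v-(\nabla v)_{rB}\|_{\underline L^2(rB)}$ and $\lesssim$ for $\le$ up to a constant depending only on $\beta,d$. Since $\bfa\in\mathcal A_{\beta,\omega}\subset\mathcal A_\beta^0$, the difference-quotient method gives $v\in H^2_{\mathrm{loc}}(B)$; moreover, for any fixed $\xi\in\R^{d\times d}$ the shifted map $v-\xi\cdot x$ solves $\divv\widetilde\bfa(\nabla(v-\xi\cdot x))=0$ with $\widetilde\bfa:=\bfa(\cdot+\xi)-\bfa(\xi)\in\mathcal A_\beta^0$, so Caccioppoli together with the Sobolev--Poincar\'e inequality yields a reverse H\"older inequality for $\nabla v-\xi$ on interior balls, and Gehring's lemma produces an exponent $p=p(\beta,d)>2$ with $\|\nabla v-\xi\|_{\underline L^p(B')}\lesssim\|\nabla v-\xi\|_{\underline L^2(2B')}$ whenever $2B'\subset B$. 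This higher integrability is the workhorse for the error terms below.

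The core is a one-step decay estimate. Fix $\rho\in(0,1]$, put $F:=(\nabla v)_{B_\rho}$ and $\mathbb L:=D\bfa(F)$; differentiating \eqref{ass:monotonicity}--\eqref{ass:lip} shows $\mathbb L\in\mathcal L_\beta$. Let $h\in v+H_0^1(B_{\rho/2})$ solve $\divv(\mathbb L\nabla h)=0$. Testing the equation for $h-F\cdot x$ with $h-v$ gives $\|\nabla h-F\|_{\underline L^2(B_{\rho/2})}\lesssim\Phi(\rho)$, and since every derivative of $h$ is again $\mathbb L$-harmonic, the interior estimate for linear constant-coefficient systems yields $\|\nabla h-(\nabla h)_{B_{\theta\rho}}\|_{\underline L^2(B_{\theta\rho})}\lesssim\theta\,\Phi(\rho)$ for $\theta\in(0,\tfrac14]$. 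For the comparison, $w:=v-h\in H_0^1(B_{\rho/2})$ solves $\divv(\mathbb L\nabla w)=\divv G$ with $G:=\big(\mathbb L-\int_0^1 D\bfa(F+t(\nabla v-F))\,dt\big)(\nabla v-F)$, and \eqref{ass:Dareg} plus monotonicity of $\omega$ give $|G|\le\tfrac1\beta\,\omega(|\nabla v-F|)\,|\nabla v-F|$. The energy estimate for $w$, H\"older's inequality with exponents $\tfrac p2$ and $\tfrac p{p-2}$, the higher integrability above, and Jensen's inequality ($\omega$ concave, $\omega\le1$) then give $\|\nabla v-\nabla h\|_{\underline L^2(B_{\rho/2})}\lesssim\omega(c\,\Phi(\rho))^{1/q}\,\Phi(\rho)$ with $q=q(\beta,d)\in(2,\infty)$. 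Combining the three bounds,
\[
 \Phi(\theta\rho)\ \le\ c_\star\big(\theta+\theta^{-d/2}\,\omega(c_\star\Phi(\rho))^{1/q}\big)\,\Phi(\rho)\qquad\text{for all }\theta\in(0,\tfrac14],\ c_\star=c_\star(\beta,d).
\]

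Given $\gamma\in(0,1)$, I would choose $\theta=\theta(\beta,d,\gamma)\in(0,\tfrac14]$ with $c_\star\theta\le\tfrac12\theta^\gamma$, and then $\overline\kappa>0$ so small that $c_\star\theta^{-d/2}\omega(c_\star\overline\kappa)^{1/q}\le\tfrac12\theta^\gamma$, so that the smallness hypothesis $\Phi(1)\le\overline\kappa$ forces $\Phi(\theta\rho)\le\theta^\gamma\Phi(\rho)$ whenever $\Phi(\rho)\le\overline\kappa$. Since $\theta^\gamma<1$ the smallness propagates along $\rho=\theta^k$, giving $\Phi(\theta^k)\le\theta^{k\gamma}\Phi(1)$, and interpolating over the intermediate radii (a ball of radius $r$ with $\theta^{k+1}<r\le\theta^k$ sits in $B_{\theta^k}$, at the cost of a factor $\theta^{-d/2}$) yields $\Phi(r)\le c\,r^\gamma\Phi(1)$ for all $r\in(0,1]$, which is \eqref{est:decaynablavconstant}. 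To make the constants genuinely independent of $\gamma$ one runs this once for a fixed $\gamma_0=\tfrac12$ to get $\nabla v\in C^{0,\gamma_0}$ near the centre with $\|\nabla v\|_{C^{0,\gamma_0}}\lesssim\Phi(1)$; then each $\partial_i v$ solves a linear system with H\"older-continuous coefficients and a Schauder estimate bounds $\|\nabla^2 v\|_{L^\infty}$ near the centre by $c\,\Phi(1)$, whence $\Phi(r)\lesssim r\,\Phi(1)\le r^\gamma\Phi(1)$ for small $r$ and every $\gamma\le1$.

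For $d=2$ the smallness can be dropped: each $\partial_i v$ solves a linear system $\divv(\mathbb A\nabla w)=0$ with $\mathbb A\in L^\infty$ valued in $\mathcal L_\beta$, and testing with $\zeta^2(w-k)$ for a cut-off $\zeta$ between $B_{r/2}$ and $B_r$ with $k=(w)_{B_r\setminus B_{r/2}}$, together with the Poincar\'e inequality on the annulus, gives $\int_{B_{r/2}}|\nabla w|^2\le C_0\int_{B_r\setminus B_{r/2}}|\nabla w|^2$ with $C_0=C_0(\beta)$; hole-filling turns this into $\int_{B_{r/2}}|\nabla w|^2\le\vartheta\int_{B_r}|\nabla w|^2$ with $\vartheta=\vartheta(\beta)<1$, hence $\int_{B_\rho}|\nabla^2 v|^2\lesssim(\rho/R)^{2\gamma_{2d}}\int_{B_R}|\nabla^2 v|^2$ on concentric balls with $\gamma_{2d}=\gamma_{2d}(\beta)>0$. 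Starting from the Caccioppoli bound $\int_{B_R}|\nabla^2 v|^2\lesssim\|\nabla v-(\nabla v)_B\|_{\underline L^2(B)}^2$ (dimensionless when $d=2$) and applying Poincar\'e once more --- the powers of $r$ match exactly in $d=2$ --- converts this into $\Phi(r)\le c\,r^{\gamma_{2d}}\Phi(1)$ with $c=c(\beta)$. The step I expect to be the main obstacle is the error estimate in the comparison, namely controlling $\|\omega(|\nabla v-F|)\,|\nabla v-F|\|_{\underline L^2(B_{\rho/2})}$ by a quantity genuinely small relative to $\Phi(\rho)$: this is precisely where the Gehring higher integrability and the concavity of $\omega$ must be combined, and it is also the structural reason why --- in contrast to $d=2$ --- the smallness condition cannot be dispensed with for $d\ge3$.
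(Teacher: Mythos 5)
Your proof is correct and follows essentially the same route that the paper itself indicates (the paper only sketches this lemma by pointing to the layered-coefficient argument of Proposition~\ref{L:estmorrey} with $\nabla'$ replaced by $\nabla$ and the slab-average replaced by the ball-average). The core pieces match: freeze the coefficient at $F=(\nabla v)_{B_\rho}$, compare with the $\mathbb L$-harmonic replacement, control the error via the commutator $G$ whose size is $\omega(|\nabla v-F|)|\nabla v-F|$, split by H\"older with a reverse-H\"older/Gehring exponent, use concavity and $\omega\le1$ via Jensen, and iterate on a geometric sequence of balls with the smallness propagating because $\theta^\gamma<1$. Your verification that $w=v-h$ solves $\divv(\mathbb L\nabla w)=\divv G$ with the stated $G$, and the bound $|G|\le\beta^{-1}\omega(|\nabla v-F|)|\nabla v-F|$, are exactly right.

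Two small comparisons with the paper's version. For $d=2$ the paper gets the smallness-free decay directly from the Meyers exponent $\mu$: in two dimensions the factor $r^{1-\gamma_{2d}}$ in the Morrey quantity $r^{1-\gamma_{2d}}\|\nabla^2 v\|_{\underline L^2(rQ)}$ exactly cancels against the rescaling from $\underline L^{2+\mu}$ to $\underline L^2$, so no iteration or hole-filling is needed once $\gamma_{2d}=\mu/(2+\mu)$. Your Widman hole-filling argument reaches the same conclusion and is also standard; it is a genuine (and somewhat more self-contained) alternative, exploiting the same dimensional coincidence that makes the Caccioppoli bound for $\int|\nabla^2 v|^2$ scale-invariant in $d=2$. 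Second, the paper's statement lists $\overline\kappa,c$ as depending only on $(\beta,d,\omega)$, but when the lemma is actually invoked in Proposition~\ref{P:largescale:holder}, Substep~3.1, the constants are allowed to depend on $\gamma$; so your extra Schauder bootstrap to strip the $\gamma$-dependence is a nice refinement but not required for the way the lemma is used downstream. Everything else checks out.
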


Lemma~\ref{L:regconstant} is well-known and can be found e.g.,\ in \cite{Giaquinta83,Giu03}. A possible proof goes along the lines of Lemma~\ref{P:nonlinearlayer} (Step~1,2 and 4), where $\nabla'$ is replaced by the full gradient $\nabla$, and $\overline{(\nabla v)_Q}$ is replaced by $(\nabla v)_Q$.  

\smallskip

Next we establish an analogous large-scale result in the case of periodic coefficients. We emphasize that the following result holds without (spatially) regularity assumptions on the coefficients.
\begin{proposition}[Large scale excess decay]\label{P:largescale:holder}
Fix $d\geq2$, $\beta\in(0,1]$ and let $\bfa$ be a periodic coefficient field of class $\mathcal A_\beta$. For every exponent $\gamma\in(0,1)$ there exists $c=c(\beta,\gamma,d)\in[1,\infty)$ and $\kappa_0=\kappa_0(\beta,\gamma,d)>0$ such that the following is true: Suppose $u\in H^1(Q_R)$, $R\geq1$, satisfies
\begin{equation}\label{eq:excessdecay}
  \divv\bfa(\nabla u)=0\qquad\mbox{in }\mathscr D'(Q_R),
\end{equation}
and $\operatorname{Exc}(\nabla u,Q_R)\leq \overline \kappa_0$, where for $\Omega\subset\R^d$ open and bounded
\begin{equation}\label{def:excess}
  \operatorname{Exc}(g,\Omega):=\inf_{F\in\R^{d\times d}}\|g-(F+\nabla \phi(F))\|_{\underline L^2(\Omega)}.
\end{equation}
Then,
\begin{equation}\label{est:excessdecay}
  \forall 1\leq r\leq R\,:\qquad \operatorname{Exc}(\nabla u,Q_r)\leq c(\frac r R)^{\gamma}\operatorname{Exc}(\nabla u,Q_R).
\end{equation}
Moreover, for $d=2$ equation \eqref{eq:excessdecay} implies \eqref{est:excessdecay} with $\gamma=\frac{\gamma_{2d}}2$ and $c=c(\beta)\in[1,\infty)$.
\end{proposition}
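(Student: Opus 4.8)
The plan is to establish a one-step excess improvement by homogenization and then iterate it from scale $R$ down to the unit scale. Fix $\gamma\in(0,1)$; all constants below depend only on $\beta,\gamma,d$. \emph{Linearization at the intrinsic slope.} For a cube $Q_r\subset Q_R$, choose $F\in\R^{d\times d}$ realizing, up to a factor $2$, the infimum in $\operatorname{Exc}(\nabla u,Q_r)$, and put $w:=u-Fx-\phi(\cdot,F)$ with $\phi(\cdot,F)$ the corrector of $\bfa$. Since $\bfa(\nabla u)$ and $\bfa(\cdot,F+\nabla\phi(\cdot,F))$ are both divergence free, $w$ solves $\divv\bfa_F(\cdot,\nabla w)=0$ in $Q_r$, where $\bfa_F(x,G):=\bfa(x,F+\nabla\phi(x,F)+G)-\bfa(x,F+\nabla\phi(x,F))$ is a periodic coefficient field of class $\mathcal A_\beta$ with constants independent of $F$. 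By uniqueness of correctors (Lemma~\ref{L:phi}), $\phi(\cdot,F+G)=\phi(\cdot,F)+\phi_F(\cdot,G)$ with $\phi_F$ the corrector of $\bfa_F$; hence the intrinsic excess is shift invariant, $\operatorname{Exc}(\nabla u,Q_\rho)=\inf_G\|\nabla w-(G+\nabla\phi_F(\cdot,G))\|_{\underline L^2(Q_\rho)}$ for $\rho\le r$, and in particular $\|\nabla w\|_{\underline L^2(Q_r)}=\operatorname{Exc}(\nabla u,Q_r)$. The point of this reduction is that the homogenization error for $w$ will be controlled by $\|\nabla w\|_{\underline L^2}$, i.e.\ by the excess itself, without any spurious $|F|$-contribution.

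\emph{One-step improvement.} Let $\theta\in(0,\tfrac14]$ and $r_*\ge1$ be chosen below, let $r\ge r_*$, and let $w_0\in w+H_0^1(Q_{r/2})$ solve the homogenized system $\divv(\bfa_F)_0(\nabla w_0)=0$ in $Q_{r/2}$; by Lemma~\ref{L:phi}, $(\bfa_F)_0$ is spatially homogeneous of class $\mathcal A_{\beta',\omega'}$ with $\beta',\omega'$ depending only on $\beta,d$. A monotonicity/energy estimate gives $\|\nabla w_0\|_{\underline L^2(Q_{r/2})}\lesssim\|\nabla w\|_{\underline L^2(Q_r)}=:\delta$, whence $\|\nabla w_0-(\nabla w_0)_{Q_{r/4}}\|_{\underline L^2(Q_{r/4})}\lesssim\delta$. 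Provided $\delta\le\overline\kappa_0$ for a small $\overline\kappa_0=\overline\kappa_0(\beta,\gamma,d)$, the constant-coefficient excess decay (Lemma~\ref{L:regconstant} applied to $(\bfa_F)_0$, with exponent $\bar\gamma:=2\gamma$ if $d\ge3$ and $\bar\gamma:=\gamma_{2d}$ if $d=2$, choosing $\gamma=\gamma_{2d}/2$ in the latter case) yields $\|\nabla w_0-(\nabla w_0)_{Q_{\theta r}}\|_{\underline L^2(Q_{\theta r})}\le C\theta^{\bar\gamma}\delta$. On the other hand, a quantitative two-scale error estimate for $\divv\bfa_F(\cdot,\nabla w)=0$ on $Q_{r/2}$ — in the modified form of Remark~\ref{rem:2scale}, where the partially regular $\nabla w_0$ is replaced by a piecewise-constant field $\widehat F$, and using the Lipschitz dependence \eqref{est:lipphisigma} of $\nabla\phi_F$ on its argument — produces $G_0:=(\nabla w_0)_{Q_{\theta r}}$ and an exponent $\varrho=\varrho(d)>0$ with
\begin{equation*}
 \|\nabla w-\bigl(G_0+\nabla\phi_F(\cdot,G_0)\bigr)\|_{\underline L^2(Q_{\theta r})}\ \lesssim\ \|\nabla w_0-(\nabla w_0)_{Q_{\theta r}}\|_{\underline L^2(Q_{\theta r})}+\theta^{-d/2}r^{-\varrho}\delta .
\end{equation*}
By shift invariance the left-hand side dominates $\operatorname{Exc}(\nabla u,Q_{\theta r})$, so $\operatorname{Exc}(\nabla u,Q_{\theta r})\le C(\theta^{\bar\gamma}+\theta^{-d/2}r^{-\varrho})\operatorname{Exc}(\nabla u,Q_r)$. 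Choosing first $\theta$ so small that $C\theta^{\bar\gamma}\le\tfrac12\theta^{\gamma}$ (possible since $\bar\gamma>\gamma$) and then $r_*$ so large that $C\theta^{-d/2}r^{-\varrho}\le\tfrac12\theta^{\gamma}$ for $r\ge r_*$, we conclude: if $r\ge r_*$ and $\operatorname{Exc}(\nabla u,Q_r)\le\overline\kappa_0$, then $\operatorname{Exc}(\nabla u,Q_{\theta r})\le\theta^{\gamma}\operatorname{Exc}(\nabla u,Q_r)$.

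\emph{Iteration and conclusion.} Assume $\operatorname{Exc}(\nabla u,Q_R)\le\overline\kappa_0$. Since the one-step improvement makes the excess non-increasing along the dyadic scales $\theta^kR$, an induction shows $\operatorname{Exc}(\nabla u,Q_{\theta^kR})\le\theta^{k\gamma}\operatorname{Exc}(\nabla u,Q_R)$ for all $k$ with $\theta^kR\ge r_*$, the threshold $\overline\kappa_0$ being preserved along the way. For $r\in[r_*,R]$ one interpolates between the two consecutive dyadic scales bracketing $r$, using $\|\cdot\|_{\underline L^2(Q_r)}\le\theta^{-d/2}\|\cdot\|_{\underline L^2(Q_{\theta^kR})}$ (evaluated at the $F$ optimal on $Q_{\theta^kR}$), and for $r\in[1,r_*]$ one uses the crude bound $\operatorname{Exc}(\nabla u,Q_r)\le r_*^{d/2}\operatorname{Exc}(\nabla u,Q_{r_*})$; in both cases this gives \eqref{est:excessdecay} with $c=c(\beta,\gamma,d)$, and for $d=2$ with $\gamma=\gamma_{2d}/2$, $c=c(\beta)$, and no smallness requirement (Lemma~\ref{L:regconstant} being unconditional for $d=2$). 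The main obstacle is the modified two-scale error estimate used in the one-step improvement: since $w_0$ is only partially regular, the classical expansion $w_0+\phi_F(\cdot,\nabla w_0)$ is unavailable and must be replaced by one built on a piecewise-constant approximation of $\nabla w_0$, with the resulting error tracked through the flux corrector $\sigma$, the Meyers higher integrability of $\nabla w_0$, the estimates \eqref{est:lipphisigma}, and an $r^{-1/2}$-type boundary-layer term — precisely the ingredient alluded to in Remark~\ref{rem:2scale}. Once it is in place, the remaining steps are routine.
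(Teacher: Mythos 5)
Your overall strategy coincides with the paper's: compare with a solution of the homogenized problem, apply the constant-coefficient excess decay (Lemma~\ref{L:regconstant}), control the comparison error by a quantitative two-scale expansion built on a piecewise-constant approximation of the (only partially regular) homogenized gradient, and iterate. The one reorganization you make---performing the ``tilting'' $w=u-Fx-\phi(\cdot,F)$ at the intrinsic slope $F$ upfront, so that $\|\nabla w\|_{\underline L^2(Q_r)}$ is (up to a factor~$2$) already the excess---is a modest streamlining of the paper's Substeps~3.1--3.2, where the preliminary one-step improvement is first stated in terms of $\|\nabla u\|_{\underline L^2}$ and then converted via the shift \eqref{tildephi}. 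Your shift-invariance identity $\phi_F(G)=\phi(F+G)-\phi(F)$ is exactly the paper's \eqref{tildephi}, so this rearrangement is correct; the smallness hypothesis it entails is also the right one ($\delta\approx\operatorname{Exc}(\nabla u,Q_r)$ small rather than $\|\nabla u\|$ small).

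Where the proposal falls short is precisely where you honestly flag it: the central technical ingredient---the estimate $\|\nabla w-(G_0+\nabla\phi_F(\cdot,G_0))\|_{\underline L^2(Q_{\theta r})}\lesssim\|\nabla w_0-(\nabla w_0)_{Q_{\theta r}}\|_{\underline L^2}+\theta^{-d/2}r^{-\varrho}\delta$---is asserted but not proved, and it is not an off-the-shelf result. The paper devotes Steps~1 and~2 of the proof to it: the construction of the cube collection $\mathcal Q$, the discrete corrector field $\chi$, the cut-off potential $\psi$, the boundary-layer estimate \eqref{locest:p1}, the dual-norm bound \eqref{EXC:eq:2:2} using the flux corrector $\sigma$, interior $H^2$ and Meyers estimates for $u_0$, and the optimization over the intermediate scale $r$ and boundary-layer thickness $\rho$ (from which the exponent, call it $q$ or $\varrho$, inherits a dependence on $\beta$ through the Meyers exponent $\mu$, not just on $d$). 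Appealing to Remark~\ref{rem:2scale} is circular, since that remark is itself an informal forward pointer to this very proof. So the architecture and all the named ingredients are right, and the iteration/interpolation in your final paragraph is routine as you say; but ``once it is in place, the remaining steps are routine'' gets the emphasis backwards---the two-scale error estimate is the heart of the argument, and the proposal leaves it undone.
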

As mentioned above, the proof of Proposition~\ref{P:largescale:holder} follows the philosophy of \cite{AL87} by comparing $u$ with suitable solutions of the homogenized equation $\divv \bfa_0(\nabla u_0)=0$. In contrast to the compactness argument used in \cite{AL87}, we rely here on quantitative estimates on the homogenization error in the spirit of more recent works on the regularity of solutions for equations with random coefficients \cite{AM16,AS16,GNO14}. In particular, the excess measure defined in \eqref{def:excess}, which invokes the $\bfa$-dependent corrector $\phi$, has been used in \cite{GNO14} to obtain various larges scale regularity estimates.

\begin{proof}[Proof of Proposition~\ref{P:largescale:holder}]
 
Throughout the proof, we write $\lesssim$ whenever it holds $\leq$ up to a multiplicative constant which depends only on $\beta$ and $d$.
 
\smallskip

Let $u_0\in H^1(Q_\frac{R}{2})$ be the unique solution to the homogenized problem
\begin{equation}\label{eq:u0}
    \divv(\bfa_0(\nabla u_0))\,=\,0\mbox{ in }\mathscr D'(Q_\frac{R}2)\qquad  u_0-u\in H_0^1(Q_\frac{R}2).
\end{equation}
The method of \cite{AL87,GNO14} requires to estimate the error of the two-scale expansion, which formally reads $w(x)=u(x)-(u_0(x)+\phi(x,\nabla u_0(x)))$. In fact, in order to make use of the equations for $u$ and $u_0$, we require $w$ to vanish on $\partial Q_{\frac{R}2}$, and thus, one introduces a boundary layer. In the linear case, the corrector term tensorizes, i.e., $\phi(x,\nabla u_0(x))=\phi(x,e_i)\partial_i u_0(x)$. This is not true in the nonlinear case, which introduces additional difficulties, e.g.,~a priori it is not obvious that $x\mapsto \phi(x,\nabla u_0(x))$ is locally weakly-differentiable. To circumvent this issue in our argument we introduce an intermediate scale $r\geq 2$ with $\frac{R}{2r}\in\N$ (for convenience) and approximate $\nabla u_0$ and $\nabla\phi(\nabla u_0)$ on scale $r$ as follows: We introduce a collection of cubes of size $r$ that partition $Q_\frac{R}2$,
\begin{equation*}
  \mathcal Q:=\{Q_r(z)\,:\,z\in r\Z^d,\,Q_r(z)\subset Q_\frac{R}{2}\},
\end{equation*}
and define
\begin{equation*}
  \chi(x):=\sum_{Q\in\mathcal Q}{\bf 1}_{Q}(x)\nabla\phi(x,(\nabla u_0)_Q).
\end{equation*}
Note that by construction we have for any $Q\in\mathcal Q$, and any unit cube $\Box\subset Q$, the identity
\begin{equation*}
  \bfa_0((\nabla v)_Q)=\fint_{\Box}\bfa(x,(\nabla v)_Q+\chi(x))\,dx.
\end{equation*}
Next, we construct an approximate potential $\psi$ for the vector field $\chi$. To that end for $Q=Q_r(z)$ we denote by $\eta_Q\in C^\infty_c(\R^d)$ a cut-off function satisfying $\eta_Q=1$ (resp. $\eta_Q=0$) in $Q_{r-1}(z)$ (resp. outside $Q_r(z)$), $0\leq \eta_Q\leq 1$, and $|\nabla\eta_Q|\leq 2$; and set
\begin{equation*}
  \psi(x):=\sum_{Q\in\mathcal Q}\eta_Q(x)\phi(x,(\nabla u_0)).
\end{equation*}
In the proof we first gather in Step~1 a couple of auxilliary estimates regarding (energy based) regularity estimates for $u_0$ and an approximation error for $\psi$. In Step~2 we establish an error estimate for the approximate two-scale expansion $\nabla u-\nabla(u_0+\psi)$. In Step~3 and~4, we use the regularity in the form of Lemma~\ref{L:regconstant} of $u_0$ to establish the decay of the excess. Finally in Step~5, we comment on the case $d=2$.

\step{1} Auxilliary estimates.

{\it Substep 1.1} $H^1$-regularity and higher integrability of $\nabla u_0$.

We claim that there exists $\mu=\mu(\beta,d)> 0$ such that for all $\rho\in(0,\frac14)$,
\begin{align}
 \|\nabla u_0\|_{\underline L^2(Q_\frac{R}2)}\lesssim&\, \|\nabla u\|_{\underline L^2(Q_\frac{R}2)},\label{EXC:eq:1:1a}\\
 \|\nabla^2 u_0\|_{\underline L^2( (1-\rho)Q_{\frac{R}2})}\lesssim&\, (\rho R)^{-1}\|\nabla u\|_{\underline L^2(Q_{\frac{R}2})},\label{EXC:eq:1:1b}\\
 \|\nabla u_0\|_{\underline L^{2+\mu}(Q_\frac{R}2)}\lesssim&\, \|\nabla u\|_{\underline L^2(Q_{R})}.\label{EXC:eq:1:1c}
\end{align}
Indeed, since $\bfa_0\in \mathcal A_{\beta^3}^0$ (see Lemma~\ref{L:phi}), \eqref{EXC:eq:1:1a} is a standard energy estimate for \eqref{eq:u0}, and \eqref{EXC:eq:1:1b} is a standard interior $H^2$-estimate that can be proven via the difference quotient method. For \eqref{EXC:eq:1:1c} observe that the interior Meyers estimate yields
\begin{equation*}
 \|\nabla u\|_{\underline L^{2+\mu_1}(Q_\frac{R}{2})}\lesssim \|\nabla u\|_{\underline L^2(Q_{R})},
\end{equation*}
for some $\mu_1=\mu_1(\beta,d)>0$, and thus \eqref{EXC:eq:1:1c} is a consequence of a global Meyers estimate (see Lemma~\ref{L:meyerglobal}).
\medskip

{\it Substep 1.2} Approximation error regarding $\psi$.

We claim that for every $Q\in \mathcal Q$
\begin{align}
  \|\nabla\psi\|_{\underline L^2(Q)}\lesssim&\, |(\nabla u_0)_Q|,\label{EXC:eq:1:2a}\\
  \|\nabla\psi-\chi\|_{\underline L^2(Q)}\lesssim&\, r^{-\frac12}|(\nabla u_0)_Q|,\label{EXC:eq:1:2b}\\
  \|\bfa_0(\nabla u_0)-\bfa(\nabla (u_0+\psi))\|_{\underline H^{1}(Q)^*}^2\lesssim&\, \frac1r|(\nabla u_0)_Q|^2+r^2\|\nabla^2u_0\|_{\underline L^2(Q)}^2,\label{EXC:eq:2:2}
\end{align}
where $\|\cdot\|_{\underline H^1(Q)^*}$ denotes the dual norm defined by
\begin{equation*}
  \|g\|_{\underline H^1(Q)^*}:=\sup\Big\{\fint_{Q}\langle g,\nabla\varphi\rangle\,|\,\varphi\in H^1(Q),\, \|\nabla\varphi\|_{\underline L^2(Q)}\leq 1\,\Big\}.
\end{equation*}
For the argument fix $Q\in\mathcal Q$ and set $F:=(\nabla u_0)_Q$. We first prove the auxiliary estimate
\begin{equation}\label{locest:p1}
  \left(\fint_Q\big(|1-\eta_Q|(|\nabla\phi(F)|+|\nabla \sigma(F)|)+|\nabla\eta_Q|(|\phi(F)|+|\sigma(F)|)\big)^2\right)^\frac12\lesssim r^{-\frac12} |F|.
\end{equation}
Indeed, let $\mathcal Q_1\subset\{\Box:=Q_1(x)\,|\,x\in\R^d\}$ denote a disjoint covering of $S_1:=\{x\in Q\,:\,\dist(x,\partial Q)\leq 1\}$. Since $r\geq 2$ we have $\#\mathcal Q_1\lesssim r^{d-1}$. Note that the supports of $\eta_Q-1$ and $\nabla\eta_Q$ are contained in $S$. By appealing additionally to the bounds $|\eta_Q|,|\nabla\eta_Q|\leq 2$, and the periodicity of $\phi$ and $\sigma$, we get
\begin{eqnarray*}
  &&\fint_Q\big(|1-\eta_Q|(|\nabla\phi(F)|+|\nabla\sigma(F)|)+|\nabla\eta_Q|(|\phi(F)|+|\sigma(F)|)\big)^2\\
  &\leq &4r^{-d}\sum_{\Box\in\mathcal Q_1}\int_{\Box}(|\nabla\phi(F)|+|\nabla\sigma(F)|+|\phi(F)|+|\sigma(F)|)^2\\
  &\lesssim& r^{-1}\int_{Q_1(0)}(|\nabla\phi(F)|+|\nabla\sigma(F)|+|\phi(F)|+|\sigma(F)|)^2.
\end{eqnarray*}
To complete the argument, note that the integral on the right-hand side is bounded by $|F|^2$, as follows from the Lipschitz continuity estimate \eqref{est:lipphisigma}, and the fact that $\phi(0)=\sigma(0)\equiv 0$.

\smallskip

Estimate \eqref{EXC:eq:1:2b} is a direct consequence of \eqref{locest:p1}, since $\nabla\psi-\chi=\nabla(\eta_Q\phi(F))-\nabla\phi(F)=(\eta_Q-1)\nabla\phi(F)+\nabla\eta_Q\otimes\phi(F)$ on $Q$. Moreover, \eqref{EXC:eq:1:2a} follows from \eqref{EXC:eq:1:2b} by the triangle inequality, since $\|\nabla\phi(F)\|_{\underline L^2(Q)}\lesssim |F|$.
\medskip

Next, we prove \eqref{EXC:eq:2:2}. By the definition of the dual norm, the definition of $\psi$, a density argument, and the homogeneity of the estimate, it suffices to prove for all $\varphi\in H^2(Q)$ with $\|\nabla\varphi\|_{\underline L^2(Q)}=1$, 
\begin{equation*}
 \fint_Q\langle \bfa_0(\cdot,\nabla u_0)- \bfa(\nabla(u_0+\eta_Q\phi(F))), \nabla\varphi\rangle\lesssim \big(r^{-\frac 12}|(\nabla u_0)_Q|+r\|\nabla^2u_0\|_{\underline L^2(Q)}\big).
\end{equation*}
Consider the decomposition
\begin{eqnarray*}
  \bfa(\cdot,\nabla(u_0+\eta_Q\phi(F))- \bfa_0(\nabla u_0)&=&J(F)+\delta\bfa\\
  \text{with}\qquad J(F)&:=&\bfa(\cdot,F+\nabla\phi(F))-\bfa_0(F),\\
  \delta\bfa&:=&\bfa(\cdot,\nabla(u_0+\eta_Q\phi(F)))-\bfa(\cdot,F+\nabla\phi(F))+\bfa_0(F)-\bfa_0(\nabla u_0).
\end{eqnarray*}
Since $\nabla(\eta_Q\phi(F))=\nabla\phi(F)+(1-\eta_Q)\nabla\phi(F)+\nabla\eta_Q\otimes \phi(F)$, the Lipschitz continuity of $\bfa$ and $\bfa_0$ yields $|\delta\bfa|\lesssim |\nabla u_0-F|+|1-\eta_Q||\nabla\phi(F)|+|\nabla\eta_Q||\phi(F)|$. Combined with \eqref{locest:p1} and Poincar\'e's inequality we get
\begin{equation}\label{locest:p2}
  \fint_Q\langle \delta\bfa,\nabla\varphi\rangle\leq \|\delta\bfa\|_{\underline L^2(Q)}\lesssim r\|\nabla^2u_0\|_{\underline L^2(Q)}+r^{-\frac12}|F|.
\end{equation}
To estimate $J(F)$ we recall that by Lemma~\ref{L:phi} we have $J(F)=\nabla\cdot \sigma(F)$. Since $\sigma_{ijk}$ (resp. $\partial_j\partial_k\varphi$) is skew-symmetric (resp. symmetric) in $j,k$, an integration by parts yields
\begin{equation*}
  -\sum_{i,j,k=1}^d \int_Q\partial_k(\eta_Q\sigma_{ijk}(F))\partial_j\varphi^i=\sum_{i,j,k=1}^d \int_Q\eta_Q\sigma_{ijk}(F))\partial_j\partial_k\varphi^i=0.
\end{equation*}
Combined with the identity $J(F)=\nabla\cdot (\eta_Q\sigma+(1-\eta_Q)\sigma)$, Cauchy-Schwarz inequality, and \eqref{locest:p1}, we obtain
\begin{equation*}
  \fint_Q\langle J(F),\nabla\varphi\rangle\lesssim\left(\fint_Q|\nabla((1-\eta)\sigma)|^2\right)^\frac12\lesssim r^{-\frac12}|F|.
\end{equation*}
Combined with \eqref{locest:p2} the claimed estimate follows.

\step{2} Error estimate for the two-scale expansion.

Suppose that the intermediate scale satisfies $2\leq r<\frac{R}{8}$ and fix $\rho\in(2\frac{r}{R},\frac14)$ --- the relative thickness of a boundary layer. We claim 
\begin{eqnarray*}
    I:=\|\nabla u-\nabla(u_0+\psi)\|_{\underline L^2(Q_\frac{R}{2})}&\lesssim&  (r^{-\frac12}+(\frac{r}{\rho R})+\rho^\frac{\mu}{4+2\mu})\|\nabla u\|_{\underline L^2(Q_R)}.
\end{eqnarray*}
For the argument set $w:=u-(u_0+\psi)$ and note that $w\in H^1_0(Q_R)$. By monotonicity, and the equations for $u$ and $u_0$, 
\begin{eqnarray*}
  I^2=\fint_{Q_\frac{R}2}|\nabla w|^2&\lesssim&\fint_{Q_\frac{R}2}\langle\bfa (\nabla u)-\bfa (\nabla (u_0+\psi)),\nabla w\rangle=\fint_{Q_R}\langle\bfa_0 (\nabla u_0)-\bfa (\nabla (u_0+\psi)),\nabla w\rangle\\
  &=&(\frac{2r}{R})^d\sum_{Q\in\mathcal Q}\fint_{Q}\langle\bfa_0 (\nabla u_0)-\bfa (\nabla (u_0+\psi)),\nabla w\rangle.
\end{eqnarray*}
Appealing to Young's inequality and the definition of $\|\cdot\|_{\underline H^1(Q)^*}$ in Step~1, we obtain
\begin{equation*}
 I^2\lesssim (\frac{r}{R})^d\sum_{Q\in\mathcal Q}\|\bfa_0 (\nabla u_0)-\bfa (\nabla (u_0+\psi))\|_{\underline H^1(Q)^*}^2.
\end{equation*}
We estimate the contribution for interior cubes and cubes close to the boundary $\partial Q_\frac{R}2$ differently. Therefore, set $\mathcal Q_{\rho}:=\{\,Q\in\mathcal Q\,:\,Q\subset Q_{(1-\rho)\frac{R}2}\,\}$ and $S:=Q_\frac{R}2\setminus (\bigcup_{Q\in\mathcal Q_{\rho}}Q)$. For cubes $Q\in\mathcal Q_{\rho}$ we appeal to \eqref{EXC:eq:2:2} and \eqref{EXC:eq:1:1b},
\begin{eqnarray*}
  (\frac{r}{R})^d\sum_{Q\in\mathcal Q_\rho}\|\bfa_0 (\nabla u_0)-\bfa (\nabla (u_0+\psi))\|_{\underline H^1(Q)^*}^2&\lesssim&\fint_{Q_{(1-\rho)\frac{R}2}}\frac1r|\nabla u_0|^2+r^2|\nabla^2 u_0|^2\\
  &\lesssim&(\frac1r+(\frac{r}{\rho R})^2)\fint_{Q_R}|\nabla u|^2.
\end{eqnarray*}
For the remainder we appeal to the higher integrability of $\nabla u_0$ as follows: By the trivial estimate $\|\cdot\|_{\underline H^1(Q)^*}\leq \|\cdot\|_{\underline L^2(Q)}$, the Lipschitz continuity of $\bfa$ and $\bfa_0$, \eqref{EXC:eq:1:2a}, \eqref{EXC:eq:1:1c}, and $\frac{|S|}{|Q_\frac{R}2|}\lesssim \rho$, we get
\begin{eqnarray*}
  &&(\frac{r}{R})^d\sum_{Q\in\mathcal Q\setminus\mathcal Q_\rho}\|\bfa_0 (\nabla u_0)-\bfa (\nabla (u_0+\psi))\|_{\underline H^1(Q)^*}^2\\
  &\leq& (\frac{1}{R})^d\sum_{Q\in\mathcal Q\setminus\mathcal Q_\rho}\int_Q|\bfa_0(\nabla u_0)-\bfa (\nabla (u_0+\psi))|^2\lesssim \fint_{Q_\frac{R}2}{\bf 1}_S|\nabla u_0|^2\,\lesssim\, \rho^{\frac{\mu}{2+\mu}}\|\nabla u\|_{\underline L^2(Q_{2R})}^2,
\end{eqnarray*}
and thus the combination of the previous two estimates yields the claim.

\step{3} One-step improvement.

We claim that there exist $c_1=c_1(\beta,\gamma,d)\in[1,\infty)$, $\overline \kappa=\overline \kappa(\beta,\gamma,d)>0$ and an exponent $q=q(\beta,d)\in(0,\frac{1}{5})$ such that
\begin{equation}\label{EXC:stepdecay}
 \forall\tau\in [R^{-\frac{3}5},\tfrac14]:\qquad \operatorname{Exc}(\nabla u,Q_{\tau R})\,\leq\,c_1\tau^{\gamma}\Big(\tau^{-\frac{d}2-1}R^{-q}+\tau^{\frac12(1-\gamma)}\Big)  \operatorname{Exc}(\nabla u,Q_R)
\end{equation}
provided
\begin{equation}\label{exc:small}
 \operatorname{Exc}(\nabla u,Q_R)\leq \overline \kappa.
\end{equation}
We divide the proof into two steps: We first establish a weaker version of the claim in which $\operatorname{Exc}(\nabla u,Q_R)$ in \eqref{EXC:stepdecay} and \eqref{exc:small} is replaced by $\|\nabla u\|_{\underline L^2(Q_R)}$ and then refine the estimate by appealing to a tilting argument.

\substep{3.1} Preliminary one-step improvement.

We claim that there exist $c_1=c_1(\beta,\gamma,d)\in[1,\infty)$, $\overline \kappa=\overline \kappa(\beta,\gamma,d)>0$ and an exponent $q=q(\beta,d)\in(0,\frac{1}{5})$, such that
\begin{equation}\label{small:u0wrong}
 \|\nabla u\|_{\underline L^2(Q_R)}\leq \overline \kappa
\end{equation}
implies,
\begin{equation}\label{EXC:stepdecaywrong}
 \forall\tau\in [R^{-\frac{3}5},\tfrac14]:\qquad \operatorname{Exc}(\nabla u,Q_{\tau R})\,\leq\,c_1\tau^{\gamma}\Big(\tau^{-\frac{d}2-1}R^{-q}+\tau^{\frac12(1-\gamma)}\Big)  \|\nabla u\|_{\underline L^2(Q_R)}.
\end{equation}
Let $r$, $\rho$ and $w$ be as in Step~2. Then for any $r\leq s<\frac{R}{4}$ we have with $F:=(\nabla u_0)_{Q_s}$ by minimality and the triangle inequality,
\begin{eqnarray*}
  \operatorname{Exc}(\nabla u,Q_s)&\leq&\|\nabla u - (F+\nabla \phi(F))\|_{\underline L^2(Q_s)}\\&\leq& \|\nabla w\|_{\underline L^2(Q_s)}+\|\nabla u_0-F\|_{\underline L^2(Q_s)}+\|\nabla \psi - \nabla \phi(F)\|_{\underline L^2(Q_s)}.
\end{eqnarray*}
We estimate
\begin{itemize}
\item the first term on the right-hand side by appealing to the trivial estimate $\fint_{Q_s}|\cdot|^2\leq (\frac Rs)^d\fint_{Q_R}|\cdot|^2$ and Step~2, 
\item the second term by exploiting the smallness assumption \eqref{small:u0wrong} and Lemma~\ref{L:regconstant}: Recall that $\bfa_0\in\mathcal A_{\beta,\omega'}$ with a modulus of continuity $\omega$ depending only on $\beta$ and $d$, cf.~Lemma~\ref{L:phi}. Hence, by Lemma~\ref{L:regconstant} (applied with $\frac{\gamma+1}2$) and the estimate \eqref{EXC:eq:1:1a}, we find $\overline \kappa=\overline \kappa(\beta,\gamma,d)>0$ and $c=c(\beta,d,\gamma)<\infty$ such that \eqref{small:u0wrong} yields 
\begin{equation}\label{reg:u0}
\forall s'\in(0,\tfrac{R}{2}]:\qquad \|\nabla u_0-(\nabla u_0)_{Q_{s'}}\|_{\underline L^2(Q_{s'})}\leq c (\frac{s}{R})^\frac{\gamma+1}2\|\nabla u\|_{\underline L^2(Q_R)},
\end{equation}
\item the third term by appealing to the estimate
\begin{equation}\label{est:term3}
 \|\nabla\phi(F)-\nabla\psi\|_{\underline L^2(Q_s)}\leq \|\nabla u_0 - F\|_{\underline L^2(Q_{2s})}+r^{-\frac12}\|\nabla u_0\|_{\underline L^2(Q_{2s})},
\end{equation}
which we establish below. In combination with \eqref{reg:u0}, we obtain
\begin{equation*}
 \|\nabla\phi(F)-\nabla\psi\|_{\underline L^2(Q_s)}\leq c(\beta,\gamma,d) ((\frac{s}{R})^{\frac{\gamma+1}2}+r^{-\frac12}(\frac{R}{s})^\frac{d}2)\|\nabla u\|_{\underline L^2(Q_R)}.
\end{equation*}
\end{itemize}
In summary, we conclude that
\begin{equation*}
  \operatorname{Exc}(\nabla u,Q_s)\,\leq\,c(\beta,\gamma,d)\Big((\frac{R}{s})^\frac{d}{2}\big(r^{-\frac12}+\tfrac{r}{\rho R}+\rho^\frac{\mu}{4+2\mu}\big)+(\frac{s}{R})^\frac{\gamma+1}2\Big)\|\nabla u\|_{\underline L^2(Q_R)}.
\end{equation*}
Optimization in $\rho$ and $r$ suggests that $\rho:=(\frac{r}{R})^\frac{4+2\mu}{4+3\mu}$ and $r:=R^{\frac{2\mu}{4+5\mu}}$, which is an admissible choice, iff 
$$
2R^{-\frac{4+3\mu}{4+5\mu}}=2\tfrac{r}{R}<\rho=R^{-\frac{4+2\mu}{4+5\mu}}<\tfrac14,\qquad 2\leq R^{\frac{2\mu}{4+5\mu}}=r\leq s\leq \tfrac{R}{8}.
$$
Setting $s=\tau R$, we obtain with $q=q(\beta,d):=\frac{\mu}{4+5\mu}$
\begin{equation*}
  \forall \tau\in[R^{-\frac{4+3\mu}{4+5\mu}},\tfrac18]:\qquad \operatorname{Exc}(\nabla u,Q_{\tau R})\,\leq\,c\tau^{\gamma}\Big((\tau^{-\frac{d}2-\gamma} R^{-q}+\tau^{\frac{1-\gamma}2}\Big)\|\nabla u\|_{\underline L^2(Q_R)},
\end{equation*}
and \eqref{EXC:stepdecaywrong} follows since $R^{-\frac{4+3\mu}{4+5\mu}}\leq R^{-\frac{3}5}$ for $\mu>0$ and $\gamma<1$.

\bigskip

Finally, we provide an argument for \eqref{est:term3}. We have
\begin{align*}
 \|\nabla \psi - \nabla \phi(F)\|_{\underline L^2(Q_s)}^2\leq 2(\tfrac{r}{s})^d\sum_{Q\in\mathcal Q \atop Q\cap Q_s \neq \emptyset}\left(\|\nabla \psi - \chi\|_{\underline L^2(Q)}^2+\|\chi-\nabla \phi(F)\|_{\underline L^2(Q)}^2\right).
\end{align*}
Using \eqref{EXC:eq:1:1b} and the Lipschitz-continuity of $F\mapsto \phi(F)$ (see Lemma~\ref{L:phi}) together with $2\leq r$, we obtain
\begin{align*}
 \|\nabla \psi - \nabla \phi(F)\|_{\underline L^2(Q_s)}^2 \lesssim&(\tfrac{r}{s})^d\sum_{Q\in\mathcal Q \atop Q\cap Q_s \neq \emptyset}\left(r^{-1} |(\nabla u)_Q|^2+|(\nabla u_0)_Q - F|^2\right)\\
 \leq&s^{-d}\sum_{Q\in\mathcal Q \atop Q\cap Q_s \neq \emptyset}\left(r^{-1} \int_Q|\nabla u|^2+\int_Q|\nabla u_0 - F|^2\right),
\end{align*}
and the estimate \eqref{est:term3} follows since the choice $2\leq r\leq s$ implies $\cup_{Q \in \mathcal Q \atop Q\cap Q_s} Q\subset Q_{2s}$.

\substep{3.2} Conclusion

For given $\widetilde F\in\R^{d\times d}$, we set
\begin{align*}
 \widetilde u(x)&:=u(x)-\widetilde F -\phi(x,\widetilde F)\\
 \widetilde\bfa(\cdot,F)&:=\bfa(\cdot,F+\widetilde F+\nabla\phi(\cdot,\widetilde F))-\bfa (\widetilde F +\nabla \phi(\cdot,\widetilde F))\quad\mbox{for all $F\in\R^{d\times d}$}. 
\end{align*}
Clearly, $\widetilde\bfa$ is a periodic coefficient field of class $\mathcal A_\beta$ and the homogenized coefficients $\widetilde \bfa_0$ and the corresponding correctors $\widetilde \phi$ are given by
\begin{equation}\label{tildephi}
 \widetilde \bfa_0(F):=\bfa_0(F+\widetilde F)-\bfa_0(\widetilde F)\qquad\mbox{and}\qquad \widetilde \phi(F)=\phi(F+\widetilde F)-\phi(\widetilde F).
\end{equation}
By construction, we have $\divv \widetilde \bfa(\nabla \widetilde u)=0$ in $\mathscr D'(Q_R)$. Hence, appealing to Substep~3.1 we obtain that
\begin{equation*}
 \|\nabla \widetilde u\|_{\underline L^2(Q_R)}=\|\nabla u- (\widetilde F + \nabla \phi(\widetilde F))\|_{\underline L^2(Q_R)}\leq \kappa, 
\end{equation*}
implies,
\begin{align*}
  \forall \tau\in[R^{-\frac{3}{5}},\tfrac14]:\qquad \widetilde{\operatorname{Exc}}(\nabla \widetilde u,Q_{\tau R})&:=\inf_{F\in\R^{d\times d}}\|\nabla \widetilde u-(F+\nabla\widetilde \phi(F))\|_{\underline L^2(\tau R)}\\
  &\leq\,c\tau^{\gamma}\Big((\tau^{-\frac{d}2-\gamma} R^{-q}+\tau^{\frac{1-\gamma}2}\Big)\|\nabla \widetilde u\|_{\underline L^2(Q_R)}.
\end{align*}
The definition of $\widetilde u$ and \eqref{tildephi} yield for every $F\in\R^{d\times d}$
$$
\|\nabla \widetilde u -(F+\nabla\tilde \phi(F))\|_{\underline L^2(\Omega)}=\|\nabla u -(F+\widetilde F +\nabla \phi(F+\widetilde F))\|_{\underline L^2(\Omega)},
$$
and thus $\widetilde{\operatorname{Exc}}(\nabla \widetilde u,Q_{\tau R})=\operatorname{Exc}(\nabla u,Q_{\tau R})$. The claim follows by choosing $\widetilde F$ such that $\|\nabla \widetilde u\|_{\underline L^2(Q_R)}=\operatorname{Exc}(\nabla u,Q_R)$.

\step{4} Iteration.

For given $\gamma\in(0,1)$, let $\overline \kappa=\overline \kappa(\beta,\gamma,d)>0$, $c_1=c_1(\beta,\gamma,d)\in[1,\infty)$ and $q=q(\beta,d)\in(0,\frac15)$ be given as in Step~3, and set 
\begin{equation}\label{smalltau}
  \tau:=\min\{(2c_1)^{-\frac2{1-\gamma}},\tfrac14\}.
\end{equation}
Let $R\geq4$ be such that
\begin{equation}\label{bigR}
 R\geq c_2:= \left(2c_1 \tau^{-1-\frac{d}2}\right)^\frac1q,
\end{equation}
and note that $c_2=c_2(\beta,\gamma,d)>0$. Since $q<\frac{3}{5}$, we have that $\tau$ given in \eqref{smalltau} is a valid choice in \eqref{EXC:stepdecay} if $R$ satisfies \eqref{bigR} and we obtain
\begin{equation*}
 \operatorname{Exc}(\nabla u,Q_{\tau R})\,\leq\,\tau^{\gamma} \operatorname{Exc}(\nabla u,Q_R)\qquad\mbox{provided \eqref{exc:small}}.
\end{equation*}
Clearly, the above estimate can be iterated and we obtain that \eqref{exc:small} implies
\begin{equation}\label{iteration}
  \operatorname{Exc}(\nabla u,Q_{\tau^k R})\,\leq\,\tau^{\gamma k} \operatorname{Exc}(\nabla u,Q_R)\qquad\mbox{for all $k\in\N$ satisfying}\quad \tau^{k-1}R\geq c_2.
\end{equation}
Finally, we observe that \eqref{iteration} implies \eqref{est:excessdecay} with some $c=c(\beta,\gamma,d)\in[1,\infty)$. Indeed  
\begin{itemize}
\item If $c_2\leq \tau^kR<r\leq \tau^{k-1}R$ for some $k\in\N$, then, 
$$
 \operatorname{Exc}(\nabla u,Q_r)\leq \tau^{-\frac{d}2}\operatorname{Exc}(\nabla u,Q_{\tau^{k-1} R})\leq \tau^{-\frac{d}2-\gamma}(\frac{r}R)^{\gamma}\operatorname{Exc}(\nabla u,Q_R).
$$
\item If $1\leq r\leq \tau^k R<c_2\leq \tau^{k-1}R$ for some $k\in\N$, then,
\begin{align*}
 \operatorname{Exc}(\nabla u,Q_r)\leq& (c_2 \tau^{-1})^\frac{d}{2}\operatorname{Exc}(\nabla u,Q_{\tau^{k-1} R})\leq (c_2 \tau^{-1})^{\frac{d}2+\gamma}(\frac{r}R)^{\gamma}\operatorname{Exc}(\nabla u,Q_R).
\end{align*}
\item If $1\leq r\leq R<c_2$, then, $\operatorname{Exc}(\nabla u,Q_r)\leq c_2^{\frac{d}2+\gamma}(\frac{r}R)^{\gamma}\operatorname{Exc}(\nabla u,Q_R)$.

\end{itemize}

\step 5 The case $d=2$.

For $d=2$, we can argue without appealing to any smallness of $\operatorname{Exc}(\nabla u,Q_R)$. Indeed, let $\gamma_{2d}>0$ be given as in Lemma~\ref{L:regconstant}. With the obvious changes in Step~3, we obtain instead of \eqref{EXC:stepdecay} the estimate 
\begin{equation*}
 \forall\tau\in [R^{-\frac{3}5},\tfrac14]:\qquad \operatorname{Exc}(\nabla u,Q_{\tau R})\,\leq\,c_1\tau^{\frac12\gamma_{2d}}\Big(\tau^{-\frac{d}2-1}R^{-q}+\tau^{\frac12\gamma_{2d}}\Big)  \operatorname{Exc}(\nabla u,Q_R),
\end{equation*}
where $c_1=c_1(\beta)\in[1,\infty)$ and the claimed decay follows as in Step~4.

\end{proof}

\begin{corollary}[Lipschitz estimate at large scales]\label{P:largeholer1}
Fix $d\geq 2$ and $\beta,s\in(0,1]$. Let $\bfa$ be a periodic coefficient field of class $\mathcal A_\beta$. There exists $\overline \kappa_0=\overline \kappa_0(\beta,d,s)>0$ and $c=c(\beta,d,s)\in[1,\infty)$ such that if $u\in H^1(Q)$, with $Q = Q_R(x_0)$ for some $x_0\in \R^d$ and $R>0$, satisfy
\begin{equation*}
\max\left\{ \sup_{r\in(0,1]}r^{-s}\|\divv \bfa(\nabla u)\|_{\underline H^{-1}(rQ)}, \operatorname{Exc}(\nabla u,Q)\right\}\leq\begin{cases}
                                      \infty&\mbox{if $d=2$,}\\
                                      \overline \kappa_0&\mbox{if $d\geq3$.}
                                     \end{cases}
\end{equation*}
Then, 
\begin{equation*}
  \forall 1\leq r\leq R:\qquad \|\nabla u\|_{\underline L^2(Q_r(x_0))}\leq c(\|\nabla u\|_{\underline L^2(Q)}+ \sup_{r\in(0,1]}r^{-s}\|\divv \bfa(\nabla u)\|_{\underline H^{-1}(r Q)}).
\end{equation*}
\end{corollary}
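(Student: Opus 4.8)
The plan is to combine the large-scale excess decay of Proposition~\ref{P:largescale:holder} with the standard comparison principle for monotone operators and a Campanato-type iteration. Throughout we may assume $x_0=0$, $Q=Q_R$ and $R\geq1$ (otherwise there is nothing to prove), and we abbreviate
\begin{equation*}
  H:=\sup_{r\in(0,1]}r^{-s}\|\divv\bfa(\nabla u)\|_{\underline H^{-1}(rQ)}.
\end{equation*}
Since $rQ=Q_{rR}$ for $r\in(0,1]$, this means $\|\divv\bfa(\nabla u)\|_{\underline H^{-1}(Q_\sigma)}\leq H(\sigma/R)^s$ for \emph{every} $\sigma\in(0,R]$, which is the only way the right-hand side will enter. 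The first step is to upgrade Proposition~\ref{P:largescale:holder} to an inhomogeneous excess decay: the claim is that there are $\gamma'=\gamma'(\beta,d,s)\in(0,1)$, $c=c(\beta,d,s)\in[1,\infty)$ and (for $d\geq3$) $\overline\kappa_0=\overline\kappa_0(\beta,d,s)>0$ such that the hypotheses of the corollary imply
\begin{equation}\label{planexcdec}
  \operatorname{Exc}(\nabla u,Q_r)\leq c\Big(\tfrac rR\Big)^{\gamma'}\big(\|\nabla u\|_{\underline L^2(Q)}+H\big)\qquad\text{for all }1\leq r\leq R
\end{equation}
(with no smallness, and $\gamma'=\gamma_{2d}/2$, $c=c(\beta)$ when $d=2$). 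The second step is to convert \eqref{planexcdec} into the stated $\underline L^2$-bound.

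For the first step I would, for a scale $\sigma\in[\lambda,R]$ (with $\lambda=\lambda(\beta,d)\geq1$ large, fixed below), compare $u$ with the unique $v_\sigma\in H^1(Q_\sigma)$ solving $\divv\bfa(\nabla v_\sigma)=0$ in $\mathscr D'(Q_\sigma)$ with $v_\sigma-u\in H^1_0(Q_\sigma)$. Testing the equations for $u$ and $v_\sigma$ with $u-v_\sigma$ and using monotonicity~\eqref{ass:monotonicity} gives the comparison estimate $\|\nabla u-\nabla v_\sigma\|_{\underline L^2(Q_\sigma)}\leq\tfrac1\beta\|\divv\bfa(\nabla u)\|_{\underline H^{-1}(Q_\sigma)}\leq\tfrac1\beta H(\sigma/R)^s$, whence $\operatorname{Exc}(\nabla v_\sigma,Q_\sigma)\leq\operatorname{Exc}(\nabla u,Q_\sigma)+\tfrac1\beta H(\sigma/R)^s$. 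Provided $\operatorname{Exc}(\nabla u,Q_\sigma)$ and $H$ are small enough (vacuous if $d=2$), this is below the threshold of Proposition~\ref{P:largescale:holder}, which applied to $v_\sigma$ on $Q_\sigma$ for a fixed exponent $\gamma\in(0,1)$, together with the triangle inequality and $\|\cdot\|_{\underline L^2(Q_{\sigma/\lambda})}\leq\lambda^{d/2}\|\cdot\|_{\underline L^2(Q_\sigma)}$, yields
\begin{equation*}
  \operatorname{Exc}(\nabla u,Q_{\sigma/\lambda})\leq c\lambda^{-\gamma}\operatorname{Exc}(\nabla u,Q_\sigma)+c\lambda^{d/2}H\Big(\tfrac\sigma R\Big)^s.
\end{equation*}
Then I would fix $\lambda$ so that $c\lambda^{-\gamma}\leq\tfrac12$ and $\lambda^{s}\geq4$, iterate this bound along $\sigma=R,R/\lambda,R/\lambda^2,\dots$ down to scale $\approx1$ --- checking, for $d\geq3$, that the smallness of the excess persists at every step if $\max\{\operatorname{Exc}(\nabla u,Q),H\}\leq\overline\kappa_0$ with $\overline\kappa_0$ chosen small depending on $\lambda$ --- and interpolate between consecutive scales to obtain \eqref{planexcdec} with $\operatorname{Exc}(\nabla u,Q)$ in place of $\|\nabla u\|_{\underline L^2(Q)}$; since $\phi(0)\equiv0$ one has $\operatorname{Exc}(\nabla u,Q)\leq\|\nabla u\|_{\underline L^2(Q)}$, which gives \eqref{planexcdec}.

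For the second step, pick for each $r\in[1,R]$ a near-optimal $F_r\in\R^{d\times d}$ in $\operatorname{Exc}(\nabla u,Q_r)$ and work along dyadic scales $2^j\in[1,R]$. By periodicity of $\phi$ one has $\fint_{Q_{2^j}}\nabla\phi(F)=0$ and $\|\nabla\phi(F)\|_{\underline L^2(Q_{2^j})}=\|\nabla\phi(F)\|_{\underline L^2(Q_1)}\lesssim|F|$ (Lemma~\ref{L:phi}), hence $\|\nabla u\|_{\underline L^2(Q_{2^j})}\leq\operatorname{Exc}(\nabla u,Q_{2^j})+c|F_{2^j}|$ and $|F_{2^j}-F_{2^{j+1}}|\leq\|(F_{2^j}+\nabla\phi(F_{2^j}))-(F_{2^{j+1}}+\nabla\phi(F_{2^{j+1}}))\|_{\underline L^2(Q_{2^j})}\lesssim\operatorname{Exc}(\nabla u,Q_{2^{j+1}})$. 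Summing the latter over $j$, using \eqref{planexcdec} and the crude bound $|F_{2^J}|\lesssim\|\nabla u\|_{\underline L^2(Q)}+\operatorname{Exc}(\nabla u,Q)$ at the largest dyadic scale $2^J\leq R$, gives $|F_{2^j}|\lesssim\|\nabla u\|_{\underline L^2(Q)}+H$ and therefore $\|\nabla u\|_{\underline L^2(Q_{2^j})}\lesssim\|\nabla u\|_{\underline L^2(Q)}+H$; a general $r\in[1,R]$ is handled by enclosing $Q_r$ in the next-larger dyadic cube (or using $Q_r\subset Q$ directly when $r$ is comparable to $R$). The main obstacle will be the first step: one must re-run the iteration so that, for $d\geq3$, the smallness of the excess is preserved at every scale --- this forces $H$ and the initial excess to be small in terms of $\lambda^{d/2}$ --- and track how the inhomogeneous contribution $H(\sigma/R)^s$ feeds through the geometric sum; the dyadic telescoping of $|F_r|$ in the second step is the other point requiring some care.
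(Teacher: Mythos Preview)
Your proposal is correct and follows essentially the same route as the paper: compare $u$ on each scale with the $\bfa$-harmonic function sharing its boundary values, feed the comparison into Proposition~\ref{P:largescale:holder} to get a one-step excess decay with an $H(\sigma/R)^s$ error, iterate (choosing $\lambda$ so that both $c\lambda^{-\gamma}\leq\tfrac12$ and $\lambda^s$ is large enough to sum the geometric series, and checking smallness is preserved for $d\geq3$), and then telescope the approximate gradients $F_r$ using periodicity of $\phi$. The only cosmetic differences are that the paper obtains the sharper bound $\operatorname{Exc}(\nabla v_\sigma,Q_\sigma)\leq\beta^{-2}\operatorname{Exc}(\nabla u,Q_\sigma)$ via monotonicity (rather than your triangle-inequality version with the extra $H$ term, which also works), and telescopes along the iteration scales $\tau^k R'$ rather than dyadic scales.
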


\begin{proof}

Throughout the proof, we write $\lesssim$ whenever it holds $\leq$ up to a multiplicative constant which depends only on $\beta$ and $d$. Without loss of generality, we suppose $Q=Q_R$. 

\smallskip

\step{1} One-step improvement.

Fix $\gamma=\gamma(\beta,d)>0$ as $\gamma=\frac12 \gamma_{2d}$ for $d=2$ and $\gamma=\frac12$ for $d\geq3$. We claim that there exists $\overline \kappa=\overline \kappa(\beta,d)>0$ and $c_1=c_1(\beta,d)\in[1,\infty)$ such that if $u$ satisfies
\begin{equation}\label{est:ecessmall}
 \operatorname{Exc}(\nabla u,Q_{R'})\leq \begin{cases}
                      \infty&\mbox{if $d=2$,}\\
                      \overline \kappa&\mbox{if $d\geq3$,}
                     \end{cases}
\end{equation}
for some $R'\in[1,R]$, then for all $\tau\in[\frac1{R'},1]$,
\begin{align}
 \operatorname{Exc}(\nabla u,Q_{\tau R'})\leq& c_1 \tau^{\gamma} \operatorname{Exc} (\nabla u,Q_{R'})+ c_1\tau^{-\frac{d}{2}}\|\divv \bfa(\nabla u)\|_{\underline H^{-1}(Q_{R'})}.\label{est:exess1}
\end{align}
Let $v\in H^1(Q_{R'})$ be the unique solution of
\begin{equation}\label{eq:vzerorhs}
 \divv \bfa (\nabla v)=0\quad\mbox{in $\mathscr D'(Q_{R'})$}\quad\mbox{with}\quad u-v\in H_0^1(Q_{R'}).
\end{equation}
We first observe that
\begin{align}\label{est:vlip}
 \beta\|\nabla u-\nabla v\|_{\underline L^2(Q_{R'})}\leq& \|\bfa (\nabla u)\|_{\underline H^{-1}(Q_{R'})},\qquad \beta^2\operatorname{Exc}(\nabla v,Q_{R'})\leq \operatorname{Exc}(\nabla u,Q_{R'}).
\end{align}
Indeed, the first inequality is a straightforward consequence of the monotonicity of $\bfa$, cf.~\eqref{ass:monotonicity}, and the definition of $\underline H^{-1}$. For the second inequality, we use that for every $F\in\R^{d\times d}$ it holds
\begin{align*}
 \beta\|\nabla v-(F+\nabla \phi(F))\|_{\underline L^2(Q_{R'})}^2\leq& \fint_{Q_{R'}}\langle \bfa (\nabla v)-\bfa(F+\nabla\phi(F)),\nabla v-(F+\nabla \phi(F))\rangle\\
 =&\fint_{Q_{R'}}\langle \bfa (\nabla v)-\bfa(F+\nabla\phi(F)),\nabla u-(F+\nabla \phi(F))\rangle,
\end{align*}
and thus by the Lipschitz-continuity of $\bfa$, cf.~\eqref{ass:lip}, and the definition of $\operatorname{Exc}(\nabla v,Q_{R'})$, cf.~\eqref{def:excess},
\begin{align*}
 \beta^2\operatorname{Exc}(\nabla v,Q_{R'})\leq  \beta^2\|\nabla v -(F+\nabla \phi(F))\|_{\underline L^2(Q_{R'})}\leq \|\nabla u-(F+\nabla \phi(F))\|_{\underline L^2(Q_{R'})}.
\end{align*}
Minimizing the right-hand side in $F\in\R^{d\times d}$, we obtain the second inequality in \eqref{est:vlip}.

\bigskip

Combining \eqref{eq:vzerorhs}, \eqref{est:vlip} and Proposition~\ref{P:largescale:holder}, we obtain that if \eqref{est:ecessmall} holds with $\overline \kappa=\beta^2\overline \kappa_0(\beta,\frac12,d)$ and $d\geq3$, where $\overline \kappa_0$ as in Proposition~\ref{P:largescale:holder}, then
\begin{equation*}
\operatorname{Exc}(\nabla v,Q_{\tau R'})\lesssim \tau^\gamma \operatorname{Exc}(\nabla v,Q_{R'})\lesssim \tau^\gamma \operatorname{Exc}(\nabla u,Q_{R'}),
\end{equation*}
and thus
\begin{align*}
 \operatorname{Exc}(\nabla u,Q_{\tau R'})\leq& \operatorname{Exc}(\nabla v,Q_{\tau R'})+\|\nabla v -\nabla u\|_{\underline L^2(Q_{\tau R'})}\\
 \lesssim& \tau^\gamma \operatorname{Exc}(\nabla u,Q_{R'})+\tau^{-\frac{d}2}\|\nabla v -\nabla u\|_{\underline L^2(Q_{R'})}.
\end{align*}
This proves \eqref{est:exess1}.

\step{2} Iteration

As in \eqref{def:H}, we set
\begin{equation*}
 H:=\sup_{r\in(0,1]}r^{-s}\|\divv \bfa(\nabla u)\|_{\underline H^{-1}(rQ)}.
\end{equation*}
Choose $\tau=\tau(\beta,d)\in[\frac1{R'},\frac14)$ such that
\begin{equation}\label{smalltau2}
 c_1\tau^\gamma\leq \tfrac14,
\end{equation}
where $c_1=c_1(\beta,d)\in[1,\infty)$ is given in Step~1. For $R\geq1$ satisfying
\begin{equation}\label{bigR2}
 R\geq (4c_1 \tau^{-\frac{d}2})^{\frac1s}\tau^{-1}=:c_2(\beta,d,s)\quad\mbox{we set}\quad R':=\frac{R}{c_2 \tau}\in[\tau^{-1},R].
\end{equation}
We claim that the following is true: Suppose that
\begin{equation}\label{est:smalliter}
\max\{\operatorname{Exc}(\nabla u,Q_{R'}), 2H\}\leq\begin{cases}
       \infty&\mbox{if $d=2$,}\\ \overline \kappa&\mbox{if $d\geq3$,}
      \end{cases}
\end{equation}
where $\overline \kappa=\overline \kappa(\beta,d)>0$ is given as in Step~1. Then, for all $k\in\N$, 
\begin{align}
 R'\geq \tau^{-k}\quad\mbox{implies}\quad\operatorname{Exc}(\nabla u,Q_{\tau^k R'})\leq& 4^{-k}\operatorname{Exc}(\nabla u,Q_{R'})+ 2^{-ks} H.\label{eq:homlip:it1}
\end{align}
The argument is via induction. The case $k=1$ is contained in Step~1. Indeed, the smallness assumption \eqref{est:smalliter} ensures that \eqref{est:exess1} is valid and the choice of $\tau$ and $R'$ in \eqref{smalltau2} and \eqref{bigR2} yield
\begin{align*}
 \operatorname{Exc}(\nabla u,Q_{\tau R'})\leq& \tfrac14 \operatorname{Exc} (\nabla u,Q_{R'})+ c_1\tau^{-\frac{d}{2}}(\tfrac{R'}{R})^sH\leq \tfrac14 \operatorname{Exc} (\nabla u,Q_{R'})+ \tfrac14H.
\end{align*}
Thus, \eqref{eq:homlip:it1} follows. We next turn to the induction step. Suppose \eqref{eq:homlip:it1} holds for a $k\in \N$. Then, for $d\geq3$ we have,
\begin{equation*}
 \operatorname{Exc}(\nabla u,Q_{\tau^k R'}) \leq 4^{-k} \operatorname{Exc}(\nabla u,Q_{R'})+ 2^{-ks}  H\leq (4^{-k}+2^{-(1+ks)})\overline\kappa\leq \overline \kappa.
\end{equation*}
Assume that $R'\geq \tau^{-(k+1)}$. By Step~1, \eqref{smalltau2}, \eqref{bigR2} and the induction hypothesis, we have
\begin{eqnarray*}
\operatorname{Exc}(\nabla u,Q_{\tau^{k+1}R'})&\leq& c_1 \tau^{\gamma} \operatorname{Exc} (\nabla u,Q_{\tau^k R'})+ c_1\tau^{-\frac{d}{2}}\|\divv \bfa(\nabla u)\|_{\underline H^{-1}(Q_{\tau^kR'})}\\
&\leq& 4^{-1} \operatorname{Exc} (\nabla u,Q_{\tau^k R'})+8^{-1}\tau^{ks}H\\
&\stackrel{\eqref{eq:homlip:it1}}{\leq}& 4^{-(k+1)}\operatorname{Exc} (\nabla u,Q_{R'})+(4^{-1}2^{-ks}+8^{-1}4^{-ks}) H\\
&\leq& 4^{-(k+1)}\|\nabla u\|_{\underline L^2(B_0)}+2^{-(k+1)s} H.
\end{eqnarray*}

\step{3} Conclusion

Let $\overline \kappa$, $\tau$ and $R'$ be as in Step~2 and suppose that \eqref{est:smalliter} is valid. We show that there exist $c=c(\beta,d,s)\in[1,\infty)$ such that for all $k\in\N$ with $R'\geq \tau^{-k}$ 
\begin{equation}
 \|\nabla u\|_{\underline L^2(\tau^k R')}\leq c( \|\nabla u\|_{\underline L^2(R')}+H).\label{eq:homlip:it3}
\end{equation}
Evidently, this estimate implies the claim of Corollary~\ref{P:largeholer1} for $\overline \kappa_0=(c_2\tau)^{-\frac{d}2}\overline \kappa$. 
We prove \eqref{eq:homlip:it3}. To that end, we first derive some auxiliary estimate for the \textit{approximate gradient} $F_r$, $r\in[1,R]$, defined as the unique matrix satisfying $\operatorname{Exc}(\nabla u,Q_r)=\|\nabla u - (F_r + \nabla \phi(F_r))\|_{\underline L^2(Q_{r})}$. 
We claim that 
\begin{equation}\label{eq:st01010b}
|F_{r}|\lesssim \|\nabla u\|_{\underline L^2(Q_{r})}\lesssim |F_r|+\operatorname{Exc}(\nabla u,Q_r).
\end{equation}
Indeed, if we denote by $\lfloor r\rfloor$ the largest integer not greater than $r$, then by periodicity we have
\begin{eqnarray*}
  |F_r|&\leq& \left(\fint_{Q_{\lfloor r\rfloor}}|F_r+\nabla\phi(F_r)|^2\right)^\frac12\lesssim \|F_r+\nabla\phi(F_r)\|_{\underline L^2(Q_r)}\leq \operatorname{Exc}(\nabla u,Q_r)+\|\nabla u\|_{\underline L^2(Q_r)}\\
  &\leq&2\|\nabla u\|_{\underline L^2(Q_r)}.
\end{eqnarray*}
Moreover, 
\begin{eqnarray*}
  \|\nabla u\|_{\underline L^2(Q_{r})}\leq \|F_r+\nabla\phi(F_r)\|_{\underline L^2(Q_{r})}+\operatorname{Exc}(\nabla u,Q_r)\lesssim |F_r|+\operatorname{Exc}(\nabla u,Q_r),
\end{eqnarray*}
where the last estimate is due to Lemma~\ref{L:phi}. Next, let $1\leq r_1\leq r_2\leq R$ and note that
\begin{eqnarray*}
 |F_{r_1}-F_{r_2}|&=&|\fint_{Q_{r_1}} F_{r_1}+\nabla \phi(F_{r_1})-(F_{r_2}+\nabla \phi(F_{r_2}))|\\
 &\leq&\left(\fint_{Q_{r_1}}|\nabla u-(F_{r_1}+\nabla \phi(F_{r_1}))|^2\right)^\frac12+\left(\fint_{Q_{r_1}}|\nabla u-(F_{r_2}+\nabla \phi(F_{r_2}))|^2\right)^\frac12\\
 &\leq&\operatorname{Exc}(\nabla u,Q_{r_1})+(\frac{r_2}{r_1})^\frac{d}{2}\operatorname{Exc}(\nabla u,Q_{r_2}).
\end{eqnarray*}
Combined with the trivial estimate $\operatorname{Exc}(\nabla u,Q_{r_1})\leq (\frac{r_2}{r_1})^\frac{d}2\operatorname{Exc}(\nabla u,Q_{r_2})$, we get
\begin{eqnarray*}
 |F_{r_1}-F_{r_2}|&\leq&2(\frac{r_2}{r_1})^\frac{d}{2}\operatorname{Exc}(\nabla u,Q_{r_2}).
\end{eqnarray*}
Hence, for any $k\in\N$ with $R'\geq\tau ^{-k}$ we obtain by a telescopic-sum argument and \eqref{eq:homlip:it1},
\begin{eqnarray*}
  |F_{\tau^k R'}|&\leq& |F_{R'}|+\sum_{i=1}^{k}|F_{\tau^i R'}-F_{\tau^{i-1} R'}| \leq |F_{R'}|+2\tau^{-\frac{d}{2}}\sum_{i=0}^\infty \big(4^{-i}\operatorname{Exc}(\nabla u,Q_{R'})+ 2^{-is}H\big)\notag\\
  &\lesssim& |F_{R'}|+\|\nabla u\|_{\underline L^2(Q_{R'})}+H.
\end{eqnarray*}
Combined with \eqref{eq:st01010b} and \eqref{eq:homlip:it1}, we get
\begin{eqnarray*}
  \|\nabla u\|_{\underline L^2(\tau^k R')}&\lesssim& |F_{\tau^kR'}|+\operatorname{Exc}(\nabla u,Q_{\tau^k R'})\lesssim \|\nabla u\|_{\underline L^2(Q_{R'})}+\operatorname{Exc}(\nabla u,Q_{R'})+H\\
  &\lesssim&\|\nabla u\|_{\underline L^2(Q_{R'})}+H,
\end{eqnarray*}
and thus \eqref{eq:homlip:it3} follows.

\end{proof}

\begin{proof}[Proof of Theorem~\ref{T:3}]
 
We show that there exists $\kappa>0$ and $c\in[1,\infty)$ with the claimed dependences such that \eqref{eq:uepsf} and \eqref{eq:uepssmall} for some $B=B_R(0)$ with $R>0$ imply 
\begin{equation*}
 \sup_{r\in(0,\e)}\|\nabla u\|_{\underline L^2(B_r(0))}\leq c\left(\|\nabla u\|_{\underline L^2(B)}+R\|f\|_{\underline L^q(B)}\right).
\end{equation*}
Clearly, the claimed estimate then follows by translation arguments and the Lebesgue differentiation theorem.

Set $u_\e:=\frac1\e u(\e \cdot)$ and $f_\e=\e f(\e \cdot)$ and note that
\begin{equation*}
 \divv \bfa (x,\nabla u_\e)=f_\e\qquad\mbox{in $\mathscr D'(B_{\frac{R}\e}(0))$.}
\end{equation*}

\step{1} Application of large-scale regularity.

We claim that there exists $c=c(\beta,d,s,q)\in[1,\infty)$ and $\overline \kappa=\overline \kappa(\beta,d,s,q)>0$ such that \eqref{eq:uepsf} and \eqref{eq:uepssmall} imply 
\begin{equation*}
 \|\nabla u\|_{\underline L^2(B_\e(0))}=\|\nabla u_\e\|_{\underline L^2(B_1(0))}\leq c\left(\|\nabla u\|_{\underline L^2(B)}+R\|f\|_{L^q(B)}\right).
\end{equation*}
In view of Corollary~\ref{P:largeholer1}, it suffices to show that
\begin{equation}\label{est:divaf}
 \sup_{r\in(0,1]}r^{-(1-\frac{d}{q})}\|\divv \bfa(\nabla u_\e)\|_{H^{-1}(\frac{r}\e B)}\leq R\|f\|_{\underline L^q(B)}.
\end{equation}
Let $v_\e\in H_0^1(\frac1\e B)$ be the unique solution to $-\Delta v_\e=f_\e$ in $H_0^1(\frac1\e B)$. By maximal regularity, we find $c=c(d)<\infty$ such that
\begin{equation*}
 \|\nabla^2 v\|_{\underline L^q(\frac1\e B)}\leq c\|f_\e\|_{\underline L^q(\frac1\e B)}=c\e \|f\|_{\underline L^q(B)}.
\end{equation*}
Hence,
\begin{align*}
 \|\divv \bfa(\nabla u_\e)\|_{H^{-1}(\frac{r}\e B)}&=\|\divv \nabla v\|_{H^{-1}(\frac{r}\e B)}=\|\nabla v - (\nabla v)_{\frac{r}\e B}\|_{\underline L^2(\frac{r}\e B)}\\
 &\lesssim \frac{rR}\e\|\nabla^2 v_\e\|_{\underline L^2(r \frac1\e B)}\leq r^{1-\frac{d}{q}}\frac{R}{\e}\|\nabla^2 v\|_{\underline L^q(\frac1\e B)}\lesssim r^{1-\frac{d}{q}} R\|f\|_{\underline L^q(B)}
\end{align*}
which implies \eqref{est:divaf}.

\step{2} Application of small scale regularity. 
 
Combining Proposition~\ref{P:niren} and Step~1, we find $\tilde \kappa=\tilde \kappa(\beta,d,s,q,E)>0$ and $\tilde c=\tilde c(\beta,d,s,q,E)\in[1,\infty)$ such that \eqref{eq:uepsf} and \eqref{eq:uepssmall} imply
\begin{equation*}
 \sup_{r\in(0,\e)}\|\nabla u\|_{\underline L^2(B_r(0))}=\sup_{r\in(0,1)}\|\nabla u_\e\|_{\underline L^2(B_r(0))}\leq \tilde c\left(\|\nabla u\|_{\underline L^2(B)}+R\|f\|_{\underline L^q(B)}\right),
\end{equation*}
which proves the claim.
 
\end{proof}

\section{Acknowledgments}
The authors acknowledge funding by the Deutsche Forschungsgemeinschaft (DFG, German Research Foundation) -- project number 405009441, and in the context of TU Dresden's Institutional Strategy \textit{``The Synergetic University''}.

\appendix

\section{Appendix}\label{appendix}

\subsection{Proof of Lemma~\ref{L:estlin1a}}\label{sec:proofkinderlehrer}

\begin{proof}[Proof of Lemma~\ref{L:estlin1a}]
 
We present the proof only in the case $R=1$, the statement for arbitrary $R>0$ follows by scaling.
 
\step{1} For all $k\in\N$ there exists $c=c(\beta,k)<\infty$ such that
\begin{equation}\label{est:l2goode}
 \sum_{\ell=0}^k\|{\nabla'}^\ell \nabla u\|_{L^2(B_\frac{1}{2})}\leq c\|\nabla u\|_{L^2(B_1)}.
\end{equation}
Indeed, this is routine and follows by differentiating the equation via difference quotients. 

\step{2} As in Proposition~\ref{L:estmorrey}, we set $J_d(u):=(\mathbb L\nabla u)e_d$. Inequality \eqref{est:l2goode} yields corresponding estimates for the derivatives of $J_d(u)$ in $x'$-direction: For every $k\in\N$ there exists $c=c(\beta,d,k)<\infty$ such that
\begin{equation*}
 \sum_{\ell=0}^{k}\|{\nabla'}^\ell J_d(u)\|_{L^2(B_{\frac{1}{2}})}\leq c \|\nabla u\|_{L^2(B_1)}.
\end{equation*}
In order to obtain suitable estimates for $\partial_d J_d(u)$, we use equation \eqref{eq:linpde1d} in the form of $\partial_d J_d(u)=-\sum_{j=1}^{d-1} \partial_j(\mathbb L\nabla u)e_j$. Hence, we find for every $k\in\N$ a constant $c=c(\beta,d,k)<\infty$ such that
\begin{align*}
 \sum_{\ell=0}^k\|{\nabla'}^\ell \partial_d J_d(u)\|_{L^2(B_{\frac{1}{2}})}\leq c \|\nabla u\|_{L^2(B_1)}.
\end{align*}
 
\step{3} We claim that there exists $c=c(\beta,d)<\infty$ such that
\begin{equation*}
 \|\nabla u\|_{L^\infty(B_\frac12)}\leq c\|\nabla u\|_{L^2(B_1)}.
\end{equation*}
The following anisotropic Sobolev inequality can be found in \cite[Lemma~2.2]{LN03}: Suppose that ${\nabla'}^\ell f\in L^2(B_1)$ and ${\nabla'}^\ell \partial_d f\in L^2(B_1)$ for all $\ell=0,\dots,K$ with $\frac{d-1}2<K$. Then $f\in C^0(B_1)$ and
\begin{equation}\label{ineq:aniso}
 \|f\|_{L^\infty(B_1)}\leq C(d)\sum_{\ell=0}^{K}(\|{\nabla'}^\ell\partial_d f\|_{L^2(B_1)}+\|{\nabla'}^\ell f\|_{L^2(B_1)}).
\end{equation}
By Step~1 and 2, we can apply \eqref{ineq:aniso} to ${\nabla'}u$ and $J_d(u)$ and obtain that there exists $c=c(\beta,d)<\infty$ such that
\begin{align*}
 \|{\nabla'}u\|_{L^\infty(B_{\frac{1}{2}})}+\|J_d(u)\|_{L^\infty(B_\frac12)}\leq&c\|\nabla u\|_{L^2(B_1)}.
\end{align*}
Finally, the $L^\infty$ estimate for $\partial_d u$ follows from Lemma~\ref{l:edmonoton} (with $\bfa(F):=\mathbb LF$). 
 
\end{proof}

\subsection{Proof of Lemma~\ref{L:phi}}\label{sec:rega}

As already mentioned, Lemma~\ref{L:phi} is completely standard and well-known, see e.g.,\ \cite{CPZ05}. We only provide an argument for the continuity of $D\bfa$ claimed in the second bullet point. 

\begin{proof}
Throughout the proof we write $\lesssim$ if $\leq$ holds up to a multiplicative constant depending only on $\beta$ and $d$.

Let us first recall that $\bfa_0\in C^1(\R^{d\times d},\R^{d\times d})$ and for every $F,G\in\R^{d\times d}$ it holds 
\begin{equation*}
 D\bfa_0(F)[G]:=\int_{Q_1}\mathbb L_F(G+\nabla \psi_G(x,F))\,dx
\end{equation*}
where $\mathbb L_F\in L^\infty(\R^d,\R^{d^4})$ denotes the fourth order tensor satisfying
\begin{equation*}
 \mathbb L_F(x)G=D\bfa(x,F+\nabla \phi(x,F))[G]\qquad\mbox{for every $G\in\R^{d\times d}$ and a.e.~$x\in\R^d$,}
\end{equation*}
and $\psi_G(F)\in H_\per^1(Q_1)$ is uniquely given by
\begin{equation}\label{eq:psig}
 \divv \mathbb L_F(G+\nabla \psi_G(F))=0\quad\mbox{in $\mathscr D'(\R^d)$},\qquad (\psi_G(F))_{Q_1}=0.
\end{equation} 
The prove of this result can easily deduced from \cite[Theorem 5.4]{Mueller93} see also \cite{NS17} (in particular proof of Lemma~3). 

\step 1 We claim that there exists $c=c(\beta,d),q=q(\beta,d)\in[1,\infty)$ such that for all $F_1,F_2,G\in\R^{d\times d}$ with $|G|=1$ it holds
\begin{equation}\label{est:dpsig}
 \|\nabla \psi_G(F_1)-\nabla \psi_G(F_2)\|_{L^2(Q_1)}\leq c\omega(c|F_1-F_2|)^\frac1q.
\end{equation}  
\substep{1.1} We claim 
\begin{equation}\label{est:difflf}
 \|\mathbb L_{F_2} - \mathbb L_{F_2}\|_{L^1(Q_1)}\lesssim \omega((1+c)|F_1-F_2|),
\end{equation}
where $c=c(\beta)<\infty$ denotes the constant in \eqref{est:lipphisigma}. Indeed, by \eqref{ass:Dareg} and concavity of $\omega$, we obtain 
\begin{align*}
 \|\mathbb L_{F_2} - \mathbb L_{F_1}\|_{L^1(Q_1)}\lesssim & \|\omega(|F_1+\nabla \phi(F_1)-(F_2+\nabla \phi(F_2))|)\|_{L^1(Q_1)}\\
 \leq&\omega\left(\| F_1+\nabla \phi(F_1)-(F_2+\nabla \phi(F_2))\|_{L^1(Q_1)}\right)
\end{align*}
and the claim follows by H\"older's inequality and \eqref{est:lipphisigma}.

\substep{1.2} Conclusion. Equation \eqref{eq:psig} and Meyers estimate imply that there exists $\mu=\mu(\beta,d)>0$ such that
\begin{equation}\label{est:psigmeyer}
 \|\psi_G(F)\|_{W^{1,2+\mu}(Q_1)}\lesssim|G|=1.
\end{equation}
A combination of \eqref{eq:psig} and \eqref{est:difflf} yields 
\begin{align*}
 &\|\nabla \psi_G(F_1)-\nabla \psi_G(F_2)\|_{L^2(Q_1)}^2\\
 \lesssim& \int_{Q_1}\langle \mathbb L_{F_1}(x)(\nabla \psi_G(F_1)- \nabla \psi_G(F_2)),\nabla \psi_G(F_1)- \nabla \psi_G(F_2)\rangle\,dx\\
 =&\int_{Q_1}\langle (\mathbb L_{F_2}(x)-\mathbb L_{F_1}(x))(G+\nabla \psi_G(F_2)),\nabla \psi_G(F_1)- \nabla \psi_G(F_2)\rangle\,dx\\
 \leq&\|\mathbb L_{F_1}-\mathbb L_{F_2}\|_{L^\frac{2(2+\mu)}{\mu}(Q_1)}\|G+\nabla \psi_G(F_2)\|_{L^{2+\mu}(Q_1)}\|\nabla \psi_G(F_1)-\nabla \psi_G(F_2)\|_{L^2(Q_1)}.
\end{align*}
The claim \eqref{est:dpsig} (with $q=\frac{2(2+\mu)}{\mu}$) follows by H\"olders inequality, \eqref{est:difflf} and \eqref{est:psigmeyer}.

\step 2 Conclusion. Let $F_1,F_2,G\in\R^{d\times d}$ with $|G|=1$ be given. Then, \eqref{est:dpsig} and \eqref{est:difflf} yield 
\begin{align*}
 &|(D\bfa_0(F_1)-D\bfa_0(F_2))[G]|\\
 =&|\fint_{Q_1}\mathbb L_{F_1}(x)(G+\nabla \psi_G(F_1))-\mathbb L_{F_2}(x)(G+\nabla \psi_G(F_2))\,dx|\\
 \leq&\|\mathbb L_{F_1}-\mathbb L_{F_2}\|_{L^2(Q_1)}\|G+\nabla \psi_G(F_1)\|_{L^2(Q_1)}+\|\mathbb L_{F_2}\|_{L^2(Q_1)}\|\nabla \psi_G(F_1)-\nabla \psi_G(F_2)\|_{L^2(Q_1)}\\
 \lesssim&\omega(c|F_1-F_2|)^\frac12+\omega(c|F_1-F_2|)^\frac1q,
\end{align*}
which proves the claim.

\end{proof}

\subsection{Meyers estimate}

In Lemma~\ref{L:perturbation} and Proposition~\ref{P:largescale:holder}, we use a global version of Meyers estimate. For convenience of the reader, we here give a short proof of this well-known result. The key ingredient is the following classic higher integrability result
\begin{theorem}[\cite{Giu03} Theorem~6.6]\label{T:gehring}
Let $K>0$, $m\in(0,1)$, $s>1$ and $B=B_R(x_0)$ for some $x_0\in\R^d$ and $R>0$ be given. Suppose that $f\in L^1(B)$ and $g\in L^s(B)$ are such that for every $z\in\R^d$ and $r>0$ with $B_r(z)\subset B$ it holds
\begin{equation*}
 \|f\|_{\underline L^1(B_\frac{r}2(z))}\leq K\left(\|f\|_{\underline L^m(B_r(z))}+\|g\|_{\underline L^1(B_r(z))}\right).
\end{equation*}
Then there exist $q=q(K,m,s)\in(1,s]$ and $c=c(K,m,s)\in[1,\infty)$ such that $f\in L^q(\frac12 B)$ and it holds
\begin{equation*}
 \|f\|_{\underline L^q(\frac12 B)}\leq c \left(\|f\|_{\underline L^1(B)}+\|g\|_{\underline L^q(B)}\right).
\end{equation*}

\end{theorem}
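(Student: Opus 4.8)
The plan is to prove Theorem~\ref{T:gehring} by the classical Gehring / Giaquinta--Modica argument: a Calder\'on--Zygmund stopping-time decomposition of $f$ at each level, combined with a measure-theoretic integration that converts the reverse--H\"older hypothesis into genuine higher integrability. First I would normalize by scaling to $B=B_1(x_0)$ and replace $f,g$ by $|f|,|g|$, and run the whole argument on the truncations $f_N:=\min\{f,N\}$, proving all bounds with constants independent of $N$ and letting $N\to\infty$ by monotone convergence; this makes the integrals $\int f^{1+\e}$ a priori finite and legitimizes the absorption below. Since the reverse--H\"older inequality is only available on balls $B_r(z)\Subset B$, I would not argue directly on the pair $(B_{1/2},B_1)$, but instead prove, for every pair of concentric balls $B_\rho\subset B_{\rho'}\subset B$, an inequality of the shape
\begin{equation*}
 \|f\|_{\underline L^{q}(B_\rho)}\le \tfrac12\|f\|_{\underline L^{q}(B_{\rho'})}+\frac{C}{(\rho'-\rho)^{\gamma}}\bigl(\|f\|_{\underline L^1(B)}+\|g\|_{\underline L^{q}(B)}\bigr),
\end{equation*}
and then conclude with the standard iteration lemma (see e.g.\ \cite{Giu03}).

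The core is the stopping-time step. Fixing $\rho<\rho'\le1$, I would set $\lambda_0:=A(\rho'-\rho)^{-d}\bigl(\|f\|_{\underline L^1(B)}+\|g\|_{\underline L^{q}(B)}\bigr)$ with $A=A(d)$ large. For $\lambda>\lambda_0$ and a Lebesgue point $x\in B_\rho$ with $f(x)>\lambda$, the average $r\mapsto\fint_{B_r(x)}f$ is continuous, tends to $f(x)>\lambda$ as $r\downarrow0$, and is $\le\lambda$ once $r$ is comparable to $\rho'-\rho$ (because $\fint_{B_r(x)}f\le r^{-d}\|f\|_{\underline L^1(B)}<\lambda$ there, by the choice of $\lambda_0$), so there is a maximal radius $r_x\lesssim\rho'-\rho$ with $\fint_{B_{r_x}(x)}f=\lambda$ and hence $B_{2r_x}(x)\subset B_{\rho'}\subset B$. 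A Vitali covering then gives a countable pairwise-disjoint subfamily $\{B_i:=B_{r_{x_i}}(x_i)\}$ whose fivefold dilations cover $\{f>\lambda\}\cap B_\rho$. On each $B_i':=2B_i$ I apply the hypothesis on the pair $(\tfrac12B_i',B_i')$, split
\begin{gather*}
 \fint_{B_i'}f^m\le c\,\lambda^{m}+\fint_{B_i'\cap\{f>\eta\lambda\}}f^m,\\
 \fint_{B_i'}g\le \eta\lambda+\fint_{B_i'\cap\{g>\eta\lambda\}}g
\end{gather*}
for a small $\eta=\eta(K,d,m)$ chosen so that the pure $\lambda$-contributions are absorbed by $\fint_{B_i}f=\lambda$, and sum over $i$ using disjointness. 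This should yield, for all $\lambda>\lambda_0$,
\begin{equation*}
 \int_{\{f>\lambda\}\cap B_\rho}f\,dx\le c\Bigl(\lambda^{1-m}\!\!\int_{\{f>\eta\lambda\}\cap B_{\rho'}}f^m\,dx+\int_{\{g>\eta\lambda\}\cap B_{\rho'}}g\,dx\Bigr),
\end{equation*}
with $c=c(K,d,m)$, while trivially $\int_{\{f>\lambda\}\cap B_\rho}f\le\int_{B_\rho}f$ for every $\lambda$.

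The final step is measure-theoretic. Writing $f^{\e}=\e\int_0^f\lambda^{\e-1}\,d\lambda$ and using Fubini gives $\int_{B_\rho}f^{1+\e}=\e\int_0^\infty\lambda^{\e-1}\bigl(\int_{\{f>\lambda\}\cap B_\rho}f\bigr)d\lambda$, and I would split the $\lambda$-integral at $\lambda_0$. The part over $(0,\lambda_0)$ is $\le\lambda_0^{\e}\int_{B_\rho}f$, which is already of the desired form; into the part over $(\lambda_0,\infty)$ I insert the distributional inequality and apply Fubini once more. The $f$-term produces $\tfrac{c\e\,\eta^{m-1-\e}}{1-m+\e}\int_{B_{\rho'}}f^{1+\e}$, whose prefactor tends to $0$ as $\e\downarrow0$ precisely because $1-m>0$, so for $\e$ small enough it is $\le\tfrac12\int_{B_{\rho'}}f^{1+\e}$ and can be absorbed; the $g$-term produces $c\,\eta^{-\e}\int_{B_{\rho'}}g^{1+\e}$, a good term as soon as $1+\e\le s$. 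Choosing such an $\e$, setting $q:=1+\e$, feeding the resulting inequality into the iteration lemma, and rewriting the $\int_{B_1}$-quantities in the normalized norms produces the claim with $q,c$ depending only on $K,m,s$ (and $d$). I expect the only genuine difficulty to be bookkeeping: arranging the radii so that the reverse--H\"older inequality is ever only applied on balls compactly contained in $B$, and controlling constants tightly enough that the coefficient of the absorbed $\int f^{1+\e}$ term is truly $<1$; the truncation, the Vitali selection, the level-set splitting, and the Fubini computations are all routine.
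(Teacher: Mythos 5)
Your outline is the classical Gehring / Giaquinta--Modica argument (stopping time at each level, reverse H\"older on the stopping balls, level-set splitting, distributional inequality, layer-cake/Fubini, absorption using $1-m>0$, then the standard iteration lemma), which is indeed the route behind the cited Theorem~6.6. However, there is a genuine gap in the central summation step. From the Vitali lemma you get a \emph{pairwise disjoint} subfamily $\{B_i\}$, and you then apply the reverse H\"older hypothesis on the pairs $(B_i,2B_i)$, obtaining per-ball bounds of the form $\lambda|B_i|\lesssim \lambda^{1-m}\int_{2B_i\cap\{f>\eta\lambda\}}f^m+\int_{2B_i\cap\{g>\eta\lambda\}}g$. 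To pass to the claimed distributional inequality you must sum these and control $\sum_i\int_{2B_i}(\cdots)$ by $\int_{B_{\rho'}}(\cdots)$, which requires \emph{bounded overlap of the dilated balls} $2B_i$. This does not follow from pairwise disjointness of the $B_i$: for example in $d=1$ the disjoint balls $B_i=B(4^{-i},\tfrac{17}{32}4^{-i})$ all have $0\in 2B_i$, and one can arrange a compatible $f$ so these are genuine stopping balls. So ``sum over $i$ using disjointness'' does not close the estimate as written.

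The gap is fixable while keeping your strategy intact. Apply the Besicovitch covering theorem directly to the family of \emph{doubled} balls $\{B(x,2r_x)\colon x\in\{f>\lambda\}\cap B_\rho\}$; this produces a countable subfamily $\{2B_i\}$ covering the level set with overlap bounded by a dimensional constant, while the inner balls still satisfy $\fint_{B_i}f=\lambda$ and (by maximality) $\fint_{2B_i}f\leq\lambda$. Then $\int_{\{f>\lambda\}\cap B_\rho}f\leq\sum_i\int_{2B_i}f\leq\lambda\sum_i|2B_i|$, the dichotomy gives $|2B_i|\lesssim\lambda^{-m}\int_{2B_i\cap\{f>\eta\lambda\}}f^m$ or $|2B_i|\lesssim\lambda^{-1}\int_{2B_i\cap\{g>\eta\lambda\}}g$, and now the bounded overlap of the $2B_i$ lets you sum. (The cube-based Calder\'on--Zygmund decomposition used in the reference is the other standard way to arrange the bookkeeping.) One more small caution: the reverse-H\"older hypothesis does \emph{not} pass to the truncation $f_N=\min\{f,N\}$, so one cannot literally ``run the whole argument on $f_N$''. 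The correct a~priori regularization is to keep $f$ in the distributional inequality but integrate the layer-cake identity only up to level $N$ (equivalently, estimate $\int_{B_\rho}f\,f_N^{\e}$, which is finite and still dominates $\int_{B_\rho}f_N^{1+\e}$); the absorption then proceeds exactly as you describe.
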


Next, we state a well-known global version of Meyer's estimate.
\begin{lemma}\label{L:meyerglobal}
Fix $d\geq 2$, $\beta\in(0,1]$ and let $\bfa$ be a coefficient field of class $\mathcal A_\beta$. Then, for every $p>2$ there exists $p_0=p_0(\beta,d,p)\in(2,p]$ and $c=c(\beta,d,p)\in[1,\infty)$ such that if $u\in H_0^1(B)$ and $g\in L^p(B)$, where $B$ denotes either a ball or a cube, satisfy
\begin{equation*}
 \divv \bfa(\nabla u)=\divv g\qquad\mbox{in $\mathscr D'(B)$,}
\end{equation*}
then
\begin{equation*}
 \|\nabla u\|_{\underline L^{p_0} (B)}\leq c \|g\|_{\underline L^{p_0} (B)}.
\end{equation*}

\end{lemma}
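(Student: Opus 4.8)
\textbf{Proof plan for Lemma~\ref{L:meyerglobal} (global Meyers estimate).}

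The plan is to reduce the statement to the classical Gehring-type higher integrability result quoted as Theorem~\ref{T:gehring}, applied to the function $f:=|\nabla u|^2$ (extended by $0$ outside $B$) and an appropriate right-hand side built from $g$. The starting point is a reverse H\"older inequality with increasing supports, obtained from Caccioppoli's inequality together with the Sobolev--Poincar\'e inequality. Concretely, fix $z\in\R^d$ and $r>0$ with $B_r(z)\subset 2B$ (I work on a slightly enlarged ball, using that $u$ extends by $0$ to all of $\R^d$ since $u\in H^1_0(B)$). Test the equation $\divv\bfa(\nabla u)=\divv g$ with $(u-c)\zeta^2$, where $\zeta$ is a standard cut-off between $B_{r/2}(z)$ and $B_r(z)$ with $|\nabla\zeta|\lesssim r^{-1}$, and where $c$ is chosen as $0$ if $B_{r/2}(z)\not\subset B$ (so that $(u-c)\zeta^2=u\zeta^2\in H^1_0(2B)$ is admissible because $u$ vanishes on a portion of $B_r(z)$, enabling Sobolev--Poincar\'e without subtracting an average) and as $c=(u)_{B_r(z)}$ if $B_r(z)\subset B$. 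Using $\beta|\nabla u|^2\le\langle\bfa(\nabla u),\nabla u\rangle$ (from \eqref{ass:a} and \eqref{ass:monotonicity} with $G=0$) and $|\bfa(\nabla u)|\le\tfrac1\beta|\nabla u|$ (from \eqref{ass:a} and \eqref{ass:lip}), together with Young's inequality to absorb, one arrives at
\begin{equation*}
 \|\nabla u\|_{\underline L^2(B_{r/2}(z))}^2\;\lesssim\; r^{-2}\|u-c\|_{\underline L^2(B_r(z))}^2+\|g\|_{\underline L^2(B_r(z))}^2,
\end{equation*}
with implicit constant depending only on $\beta$ and $d$.

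Next, apply the Sobolev--Poincar\'e inequality to the first term: if $c=(u)_{B_r(z)}$ then $r^{-2}\|u-c\|_{\underline L^2(B_r(z))}^2\lesssim\big(\fint_{B_r(z)}|\nabla u|^{2_*}\big)^{2/2_*}$ with $2_*:=\tfrac{2d}{d+2}<2$; if $c=0$ and $u$ vanishes on $B_r(z)\setminus B$ on a set of measure comparable to $|B_r(z)|$, the same inequality without subtracting the mean applies. (The only subtlety is the intermediate regime where $B_{r/2}(z)\subset B$ but $B_r(z)\not\subset B$; there one splits $B_r(z)$ into its intersection with $B$ and the exterior part where $u=0$, and uses a Poincar\'e inequality on the intersection with the outside set carrying positive measure — this is routine.) Setting $f:=|\nabla u|^2\mathbf 1_{2B}$, $m:=2_*/2\in(0,1)$, and $\tilde g:=|g|^2\mathbf 1_{2B}$, and choosing $s:=p/2>1$, the displayed bound becomes exactly the hypothesis of Theorem~\ref{T:gehring} on the ball $2B$. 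That theorem then yields $q=q(\beta,d,p)\in(1,s]$ and $c=c(\beta,d,p)$ with
\begin{equation*}
 \|f\|_{\underline L^q(B)}\;\le\; c\big(\|f\|_{\underline L^1(2B)}+\|\tilde g\|_{\underline L^q(2B)}\big).
\end{equation*}
Translating back, with $p_0:=2q\in(2,p]$, this reads $\|\nabla u\|_{\underline L^{p_0}(B)}^2\lesssim\|\nabla u\|_{\underline L^2(2B)}^2+\|g\|_{\underline L^{p_0}(2B)}^2$; finally the global energy estimate $\|\nabla u\|_{L^2(B)}\lesssim\|g\|_{L^2(B)}$ (test the equation with $u$ itself and use monotonicity and $\|g\|_{\underline L^2}\le\|g\|_{\underline L^{p_0}}$) absorbs the first term, and a harmless adjustment of the radius (working on $B$ versus $2B$, handled by a standard covering/iteration or by simply applying the interior statement on $B$ and noting $u\in H^1_0$ makes everything global) gives the claimed $\|\nabla u\|_{\underline L^{p_0}(B)}\le c\|g\|_{\underline L^{p_0}(B)}$.

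The main obstacle is bookkeeping the boundary contribution: since $u\in H^1_0(B)$ but the balls $B_r(z)$ appearing in the reverse H\"older inequality may straddle $\partial B$, one must arrange the Poincar\'e/Sobolev step so that subtracting a constant is legitimate \emph{and} the resulting inequality has the exact self-improving structure of Theorem~\ref{T:gehring} uniformly in $z$ and $r$. This is handled by the standard device of extending $u$ by zero and distinguishing the three cases according to whether $B_{r/2}(z)$, respectively $B_r(z)$, is contained in $B$; in the boundary cases one uses that $\{u=0\}\cap B_r(z)$ has measure $\gtrsim|B_r(z)|$ to run Sobolev--Poincar\'e without the mean. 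All other steps — Caccioppoli, Young's absorption, the passage between $|\nabla u|^2$ and $|\nabla u|$, and the energy estimate — are entirely routine given \eqref{ass:a}–\eqref{ass:lip}.
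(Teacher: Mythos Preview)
Your proposal is correct and follows essentially the same route as the paper: extend $u$ and $g$ by zero, derive a reverse H\"older inequality via Caccioppoli plus Sobolev--Poincar\'e (treating interior balls and balls meeting $\partial B$ separately, the latter using that $u$ vanishes on a set of measure comparable to $|B_r(z)|$), and then feed this into Theorem~\ref{T:gehring}. The paper compresses your three cases into two (either $B_r(z)\subset B$ or not) and, in the boundary case, passes to $B_{2r}(z)$ to guarantee that $|B_{2r}(z)\setminus B|\gtrsim|B_{2r}(z)|$; you are also more explicit than the paper about the final step of absorbing $\|\nabla u\|_{\underline L^2}$ via the global energy estimate, which the paper leaves implicit.
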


\begin{proof}
Without loss of generality, we consider $B=B_1(0)$, the general case follows by scaling and translation. Throughout the proof, we write $\lesssim$ if $\leq$ holds up to a multiplicative constant depending on $\beta$ and $d$. 

We show that for every $z\in\R^d$ and $r>0$ it holds
\begin{equation}\label{meyerest:claim}
 \|\nabla u\|_{\underline L^2(B_\frac{r}{2}(z))}\lesssim \|\nabla u\|_{\underline L^{\frac{2d}{d+2}}(B_r(z))}+\|g\|_{\underline L^2(B_r(z))},
\end{equation}
where we extend $u$ and $g$ by zero on $\R^d\setminus B$. The claim follows by applying Theorem~\ref{T:gehring}.

Following \cite[Proposition B.6]{AM16}, we distinguish between the cases $B_r(z)\subset B$ and $B_r(z)\setminus B\neq \emptyset$. Suppose that $B_r(z)\subset B$. Then, a combination of the Caccioppoli inequality and the Sobolev inequality yields
\begin{align}\label{meyerest:interior}
 \|\nabla u\|_{\underline L^2(B_\frac{r}2(z))}\lesssim& r^{-1}\|u-(u)_{B_r(z)}\|_{\underline L^2(B_r(z))}+\|g\|_{\underline L^2(B_r(z))}\notag\\
 \lesssim& \|\nabla u\|_{\underline L^{\frac{2d}{d+2}}(B_r(z))}+\|g\|_{\underline L^2(B_r(z))}.
\end{align}
Next, we suppose that $B_r(z)\setminus B\neq\emptyset$. We easily obtain the following Caccioppoli inequality
\begin{equation*}
 \|\nabla u\|_{\underline L^2(B_\frac{r}2(z))}\lesssim r^{-1}\|u\|_{\underline L^2(B_{r}(z))}+\|g\|_{\underline L^2(B_{r}(z))}.
\end{equation*}
Clearly, we find $c=c(d)>0$ such that
\begin{equation*}
 |B_{2r}(z)\setminus B|\geq c|B_{2r}(z)|.
\end{equation*}
Hence, we can apply a version of Poincar\'e inequality (see e.g.,\ \cite[Theorem~3.16]{Giu03}) to obtain
\begin{align}\label{meyerest:boundary}
 \|\nabla u\|_{\underline L^2(B_\frac{r}2(z))}\lesssim& r^{-1}\|u\|_{\underline L^2(B_{2r}(z))}+\|g\|_{\underline L^2(B_{2r}(z))}\notag\\
 \lesssim&\|\nabla u\|_{\underline L^{\frac{2d}{d+2}}(B_{2r}(z)}+\|g\|_{\underline L^2(B_{2r}(z)}.
\end{align}
Clearly, \eqref{meyerest:interior}, \eqref{meyerest:boundary} and a simple covering argument imply \eqref{meyerest:claim} which finishes the proof.

\end{proof}

\end{document}